\documentclass[reqno]{amsart}
\usepackage[utf8]{inputenc}
\usepackage[paperwidth=8.5in, paperheight=11in]{geometry}

\usepackage{amsmath}
\usepackage{hyperref}
 \usepackage{amscd}
 \usepackage{url}
 \usepackage{amsthm}
 \usepackage{amsfonts}
 \usepackage{amssymb}
 \usepackage{mathrsfs}  
 \usepackage{enumerate}
 \usepackage{color}
 \usepackage{tikz-cd}
 \usepackage{mathtools}

\theoremstyle{plain}
 \newtheorem{thm}{Theorem}
 \numberwithin{thm}{subsection} 

 \numberwithin{equation}{subsection}
 \newtheorem{prop}[thm]{Proposition}
 \newtheorem{lem}[thm]{Lemma}

 \theoremstyle{definition}
 \newtheorem{defi}[thm]{Definition}

 \theoremstyle{remark}

 \newtheorem{rmk}[thm]{Remark}

 \newtheoremstyle{warnings}{}{}{\color{red}}{}{\color{red}\bfseries}{.}{ }{}
 \theoremstyle{warnings}

\newcommand{\colim}{\operatorname{colim}}

\newcommand{\Z}{\mathbb{Z}}

\newcommand{\C}{\mathscr{C}}

\newcommand{\R}{\mathbb{R}}

\newcommand{\Q}{\mathbb{Q}}

\newcommand{\X}{\mathscr{X}}
\newcommand{\Xf}{\mathfrak{X}}
\newcommand{\Yf}{\mathfrak{Y}}

\newcommand{\Fs}{\mathscr{F}}

\newcommand{\Y}{\mathcal{Y}}

\renewcommand{\P}{\mathbb{P}}

\newcommand{\A}{\mathbb{A}}

\newcommand{\m}{\mathfrak{m}}

\newcommand{\ov}{\overline}

\newcommand{\Spec}{\operatorname{Spec}}

\newcommand{\Hom}{\operatorname{Hom}}

\newcommand{\Mod}{\operatorname{Mod}}
\newcommand{\ModDesc}{\operatorname{ModDesc}}
\newcommand{\Alg}{\operatorname{Alg}}

\newcommand{\Eq}{\operatorname{Eq}}

\newcommand{\Res}{\operatorname{Res}}

\newcommand{\subsetq}{\subseteq}

\title{A Topology on Points on Stacks}
\subjclass[2010]{Primary 14G20; Secondary 14A20, 11R56}
\keywords{Stacks, Local Fields, Adeles, Non-archimedean geometry}
\author{Atticus Christensen}
\address{Department of Mathematics, Massachusetts Institute of Technology, Cambridge, MA 02139-4307, USA}
\email{atticusc@mit.edu}
\urladdr{http://math.mit.edu/~atticusc/}
\thanks{The work in this thesis was partly supported by Simons Foundation grants
  \#402472 (to Bjorn Poonen) and \#550033, and by National Science Foundation grant DMS-1601946.}
\date{\today}

\hypersetup{ colorlinks=true, linkcolor=black, filecolor=magenta,
  urlcolor=cyan, }

\begin{document}
\begin{abstract}
  For a variety over certain topological rings $R$, like $\Z_p$ or $\mathbb{C}$, there is a well-studied way to topologize the $R$-points on the variety. In this paper, we generalize this definition to algebraic stacks. For an algebraic stack $\Xf$ over many topological rings $R$, we define a topology on the isomorphism classes of $R$-points of $\Xf$. We prove expected properties of the resulting topological spaces including functoriality. Then, we extend the definition to the case when $R$ is the ring of adeles of some global field. Finally, we use this last definition to strengthen the local-global compatibility for stacky curves of Bhargava--Poonen to a strong approximation result.
 \end{abstract}
\maketitle
\tableofcontents
\section{Introduction}

To study a variety $X$ defined over $\Q$, it is often useful to study the base change of this variety to the various completions of $\Q$.  One benefit of this is that for any place $p$ of $\Q$, the points $X(\Q_p)$ can be endowed with the structure of a topological space.

In this paper, we will generalize this definition to the case of stacks. When $\Xf$ is an algebraic stack over $\Q_p$ we define a natural topology on the isomorphisms classes of $\Q_p$ points, which we denote by $\Xf(\Q_p)$. We in fact give a define a topology more generally: for a large class of topological rings $R$ and for finite type algebraic stacks $\Xf$ over $R$, we define a natural topology on $\Xf(R)$.

The idea for the definition comes from the fact that for $X$ and $Y$ two varieties over $\Q_p$ and $X\to Y$ a smooth morphism, the map $X(\Q_p)\to Y(\Q_p)$ is open, and in particular a quotient onto its image. If one assumes that a smooth map of stacks should have the same property one arrives at the definition given in Section \ref{sec:def}.

In order for the definition to make sense, we establish in Section \ref{sec:cocycle} that for any algebraic stack $\Xf$ over $R$ and $x\in \Xf(R)$, there is a smooth cover $Z\to \Xf$ such $x$ lifts to a point of $Z(R)$. To do this we introduce ``cocycle'' spaces. If we are given a smooth cover $X\to \Xf$ by a scheme and we wish to define a map $T\to \Xf$ for a scheme $T$, one strategy is to find a flat cover $T'\to T$ and to give a map $T'\to Z$ and certain descent data; these cocycle spaces parameterize such maps $T'\to Z$ together with descent data.

In Section \ref{sec:func}, we prove expected functorialities of these spaces.

In Section \ref{sec:algspace} and \ref{sec:algstack}, we prove basic properties of $\Xf(R)$ for particularly well-behaved topological rings. In particular we establish that smooth maps between stacks induce open maps on $R$-points, that for separated algebraic stacks $\Xf$ over $\Q_p$, $\Xf(\Q_p)$ is Hausdorff, and that for proper Deligne-Mumford stacks $\Xf$ with finite diagonal over $\Q_p$, $\Xf(\Q_p)$ is furthermore compact.

Finally, in Section \ref{sec:adeles} we explain how if $\A$ is a ring of adeles of a global field and $\Xf$ a finitely presented stack over $\A$ we can define a topology on $\Xf(\A)$. We conclude this section by giving an application of the theory, strengthening a result of Bhargava and Poonen (definitions can be found in Section \ref{sec:adeles})

\begin{thm}[Strong approximation for stacky curves]
  Let $k$ be a number field. Let $\Xf$ be a stacky curve over $k$ of genus llss than $1/2$. Let $\A'$ be obtained by removing one place from the adeles of $K$. Then \[X(k)\to X(\A')\] has dense image.
\end{thm}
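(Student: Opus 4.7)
The plan is to combine Bhargava--Poonen's local-global principle for integral points on stacky curves of genus less than $1/2$ with the openness of smooth morphisms on $R$-points established in Section~\ref{sec:algstack}, using the cocycle covers of Section~\ref{sec:cocycle} to transport open neighborhoods between $\Xf$ and an auxiliary smooth scheme.

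First, by the construction of the adelic topology in Section~\ref{sec:adeles}, a basic open neighborhood of $(x_v) \in \Xf(\A')$ has the form $\prod_{v \in S} U_v \times \prod_{v \notin S \cup \{v_0\}} \Xf(\o_v)$ for some finite set $S$ of places of $k$, some open neighborhoods $U_v \subset \Xf(k_v)$ of $x_v$, and some fixed integral model of $\Xf$. Density thus reduces to the following: given any such data, produce $x \in \Xf(k)$ with $x \in U_v$ for $v \in S$ and $x \in \Xf(\o_v)$ for $v \notin S \cup \{v_0\}$. For each $v \in S$, Section~\ref{sec:cocycle} supplies a smooth cover $Z_v \to \Xf$ through which $x_v$ factors, and by the openness of smooth morphisms on $R$-points the set $U_v$ pulls back to an open neighborhood of the lift in $Z_v(k_v)$.

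The crux is to use the genus hypothesis to replace these many local covers by a single global one that admits classical strong approximation. The classification underlying Bhargava--Poonen's work identifies stacky curves of genus less than $1/2$ as essentially those admitting a finite representable surjection $\pi:Y\to\Xf$ with $Y$ a smooth geometrically integral genus-$0$ curve over $k$, i.e.\ a conic; moreover the hypothesis that $\Xf(\A')\ne\varnothing$ forces $Y$ to have points at every completion. Hasse--Minkowski for conics then yields $Y\cong\P^1_k$, and $\P^1_k$ enjoys strong approximation away from any single place (via the translation-invariant structure of $\mathbb{A}^1_k \subset \P^1_k$). Pulling back the conditions $U_v$ to open conditions in $Y(k_v)$ and applying strong approximation on $Y$ yields $y \in Y(k)$ meeting them all; then $x := \pi(y) \in \Xf(k)$ has the desired adelic image by the functoriality of Section~\ref{sec:func}.

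The main obstacle is the lifting step, in two respects. A representable finite cover $\pi$ need not be surjective on $k_v$-points, so establishing the local lifts and controlling the Brauer/gerbe-type obstruction (which is exactly what Bhargava--Poonen resolve for the existence statement) must be done carefully at each place in $S$. Moreover, near the stacky locus of $\Xf$ one must carry open neighborhoods \emph{through} $\pi$, which requires replacing $\pi$ étale-locally by a smooth cover and reassembling cocycle data as in Section~\ref{sec:cocycle}. This is precisely the place where the topological framework developed in this paper is essential: it is what upgrades Bhargava--Poonen's existence-of-a-point theorem into the desired existence-of-a-point-meeting-a-neighborhood statement.
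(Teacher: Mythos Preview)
Your strategy of pulling back through a finite representable cover $\pi:Y\to\Xf$ from a genus-zero curve has a genuine gap that the machinery you invoke does not close. Take $\Xf$ to be the root stack $\sqrt[n]{\P^1,\infty}$, which is the generic case once one knows $\Xf_{\mathrm{coarse}}\cong\P^1$ with a single stacky point of order $n$. A connected finite \'etale cover of $\Xf$ by a genus-$0$ scheme does not exist for $n>1$: stacky Riemann--Hurwitz gives $\chi(Y)=d\,\chi(\Xf)=d(1+1/n)$, which equals $2$ only for $d=2n/(n+1)\notin\Z$. The degree-$n$ representable cover $\P^1\to\Xf$ that does exist is therefore ramified on the schematic chart, where it is $z\mapsto z^n$ on $\A^1$; its image in $\A^1(k_v)\subset\Xf(k_v)$ is the set of $n$-th powers, open but not dense. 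Hence there are nonempty open $U_v\subset\Xf(k_v)$ with $\pi^{-1}(U_v)=\varnothing$, and your pull-back step fails outright. The same obstruction already undermines your Hasse--Minkowski step: $\Xf(k_v)\ne\varnothing$ does not force $Y(k_v)\ne\varnothing$. Neither the cocycle spaces of Section~\ref{sec:cocycle} (which produce smooth covers with no control on genus or on strong approximation) nor the Bhargava--Poonen existence statement addresses this.

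The paper goes in the opposite direction. From \cite{BP} one has $\Xf_{\mathrm{coarse}}\cong\P^1_k$ with $\Xf\to\P^1_k$ an isomorphism away from one point, so $\A^1_k$ sits inside $\Xf$ as a dense open \emph{subscheme}. The only stack-specific input needed is Lemma~\ref{density}: for a dense open subscheme $U\subset\Xf$ of a stacky curve over an essentially analytic field, $U(k_v)$ is dense in $\Xf(k_v)$. This lets one shrink each local condition $V_v$ into $\A^1(k_v)$, after which classical strong approximation for $\A^1$ away from one place finishes immediately. No cover from above, no lifting obstruction, and no gerbe analysis enter.
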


A related topological space associated to a algebraic stack over topological field has been defined by Ulirsch in \cite{Ulirsch}. The major difference between the topological space defined there and the one in this paper is that the space defined in \cite{Ulirsch} identifies two points if they become isomorphic after a non-archimedean field extension. No such identification happens in this paper.
\section{Background}
The theory of algebraic spaces and algebraic stacks begins with the theory of descent. We begin by giving a brief review of this theory.

Let $R$ be a ring. Let $S$ be an $R$-algebra. For an $R$-module $M$, we may extend scalars $M\otimes_R S$ to obtain an $S$-module. Set $S_2=S\otimes_R S$, and note there are two $R$-algebra maps $i_1,i_2:S\to S\otimes_R S$ given by $i_1(s)=s\otimes 1$ and $i_2(s)=1\otimes s$. There is a canonical isomorphism $(M\otimes_R S)\otimes_{S, i_1}(S\otimes_R S)\to (M\otimes_R S)\otimes_{i_2,S}(S\otimes_R S)$ coming from the fact that both $(M\otimes_R S)\otimes_{S, i_1}(S\otimes_R S)$ and $(M\otimes_R S)\otimes_{S, i_2}(S\otimes_R S)$ are canonically isomorphic to $M\otimes_R (S\otimes_R S)$.

Thus an $S$-module $N$ can only potentially be obtained from an extension of scalars from an $R$-module if there is an isomorphism $N\otimes_{S, i_1}(S\otimes_R S)\to N\otimes_{S, i_2}(S\otimes_R S)$.

With this in mind, we define:
\begin{defi}
  Let $R$ be a ring and $S$ an $R$-algebra. An $S$-module with descent datum to $R$ is a pair $(N,\psi)$ of an $S$-module $N$ and an isomorphism $\psi:N\otimes_{S, i_1}(S\otimes_R S)\to N\otimes_{S, i_2}(S\otimes_R S)$.
\end{defi}

Using these we define a category of modules with descent data.
\begin{defi}
  Let $R$ be a ring and $S$ an $R$-algebra. Let $\mathrm{ModDesc}_{R\to S}$ denote the category of $S$-modules with descent datum to $R$, where the objects are $S$-modules with descent datum to $R$, and a morphism $(N_1,\psi_1)\to (N_2,\psi_2)$ is $S$-module morphism $\phi:N_1\to N_2$ such the following diagram is commutative:

  \[
    \begin{tikzcd}
      N_1\otimes_{S, i_1}(S\otimes_R S)\ar[r,"\psi_1"]\ar[d,"\phi"]&N_1\otimes_{S, i_2}(S\otimes_R S)\ar[d,"\phi"]\\
      N_2\otimes_{S, i_1}(S\otimes_R S)\ar[r,"\psi_1"]&N_2\otimes_{S, i_2}(S\otimes_R S)
    \end{tikzcd}.
  \]
\end{defi}

If $\Mod_R$ is the category of $R$-modules, we have already seen there is a natural functor $\Mod_R\to \ModDesc_{R\to S}$.

\begin{thm}[Faithfully flat descent,{\cite[\href{https://stacks.math.columbia.edu/tag/032M}{Tag 032M}]{stacks-project}}  ]\label{thm:faithfulflat}
  If $R\to S$ is a faithfully flat map of rings, $\Mod_R\to \ModDesc_{R\to s}$ is an equivalence of categories.

  Furthermore, we can describe a quasi-inverse. Begin with $(N,\psi)\in \ModDesc_{R\to S}$. We have two maps $N\to N\otimes_{S, i_2}(S\otimes_R S)$: the first is the natural map onto the first factor and the second is the composite $N\to N\otimes_{S, i_1}(S\otimes_R S)\xrightarrow{\psi} N\otimes_{S, i_2}(S\otimes_R S)$. We associate the $R$-module $\Eq(N\rightrightarrows N\otimes_{S, i_2}(S\otimes_R S)$
\end{thm}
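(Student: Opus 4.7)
The plan is to exhibit the functor $G:\ModDesc_{R\to S}\to \Mod_R$ promised by the statement,
\[ G(N,\psi) := \Eq\bigl(d_0, d_1 : N \rightrightarrows N\otimes_{S,i_2}(S\otimes_R S)\bigr), \]
where $d_0(n)=n\otimes 1$ and $d_1$ is the composite $N\to N\otimes_{S,i_1}(S\otimes_R S)\xrightarrow{\psi}N\otimes_{S,i_2}(S\otimes_R S)$, and to verify that $G$ is quasi-inverse to the base-change functor $F: M \mapsto (M\otimes_R S, \psi^{\mathrm{can}})$. Concretely I would check that the unit $M \to GF(M)$ is an isomorphism of $R$-modules and that the counit $FG(N,\psi) \to (N,\psi)$ is an isomorphism in $\ModDesc_{R\to S}$; these together give the equivalence.

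The key technical input is exactness of the \emph{Amitsur complex}: for every $R$-module $M$, the sequence
\[ 0 \to M \to M\otimes_R S \xrightarrow{\partial_0-\partial_1} M\otimes_R(S\otimes_R S) \]
with $\partial_0(m\otimes s)=m\otimes s\otimes 1$ and $\partial_1(m\otimes s)=m\otimes 1\otimes s$ is exact. I would prove this by the standard ``section trick''. Faithful flatness of $R\to S$ reduces the exactness of a complex of $R$-modules to its exactness after applying $-\otimes_R S$. But after this base change, the map $S \to S\otimes_R S$ admits the multiplication $\mu:S\otimes_R S\to S$ as a retraction, and whenever a ring map $R' \to S'$ has an $R'$-algebra retraction $\sigma$, the formula $h(x \otimes s') := \sigma(s')\cdot x$ furnishes an explicit $R'$-linear contracting homotopy for the Amitsur complex, proving exactness in the retracted case.

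The unit isomorphism follows immediately: for $(N,\psi)=F(M)=(M\otimes_R S, \psi^{\mathrm{can}})$ the maps $d_0,d_1$ unwind to the Amitsur face maps on $M\otimes_R S \rightrightarrows M\otimes_R S\otimes_R S$, so their equalizer is $M$. For the counit, given $(N,\psi)$ and $M:=G(N,\psi)$, I would apply $-\otimes_R S$ to the defining equalizer of $M$: by flatness of $R\to S$ the sequence stays exact, so $M\otimes_R S$ is identified with the equalizer of the resulting pair of maps $N\otimes_R S \rightrightarrows (N\otimes_{S,i_2}(S\otimes_R S))\otimes_R S$. Using $\psi$ to re-identify terms and the cocycle identity for $\psi$ on $N\otimes_S(S\otimes_R S\otimes_R S)$ to compare the three face maps, this equalizer is naturally identified with $N$, giving the counit isomorphism; its compatibility with $\psi$ is formal from the construction.

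The main obstacle is this last identification. A slicker route is to apply the section trick a second time: base-change the whole statement along $R\to S$, where $\psi$ becomes trivializable because $i_1$ acquires the retraction $\mu$, reducing the counit to a direct computation on a ``split'' descent datum; faithful flatness of $R\to S$ then descends the resulting $S$-linear isomorphism $M\otimes_R S \ito N$ back down to $R$ and enforces the descent-data compatibility. Either way, the delicate bookkeeping is to track the three face maps on $N\otimes_S S\otimes_R S\otimes_R S$ and to use the cocycle condition on $\psi$ (implicit in the definition of $\ModDesc_{R\to S}$) to make the requisite diagrams commute.
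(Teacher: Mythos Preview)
The paper does not give its own proof of this theorem: it is quoted verbatim as a citation to the Stacks Project (Tag 032M), with no \texttt{proof} environment following the statement. So there is nothing in the paper to compare against beyond the reference itself.

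Your sketch is the standard argument and is essentially what one finds at the cited tag: define the quasi-inverse via the equalizer, prove exactness of the Amitsur complex by faithfully-flat base change plus the contracting homotopy coming from the multiplication retraction, and handle the counit either by tensoring the equalizer with $S$ and invoking flatness, or by the second application of the section trick. Both routes you outline are correct and well-known.

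One point worth flagging: you correctly invoke the cocycle condition on $\psi$ for the counit computation, calling it ``implicit in the definition of $\ModDesc_{R\to S}$''. In fact the paper's Definition of an $S$-module with descent datum does \emph{not} impose the cocycle condition---it only asks for an isomorphism $\psi$. Without the cocycle condition the functor $F$ is still fully faithful (that part only needs the Amitsur sequence), but essential surjectivity fails, so the theorem as literally stated in the paper is not quite right. This is an omission in the paper's exposition rather than in your argument; the Stacks Project reference of course includes the cocycle condition, and you are right to use it.
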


\subsection{Sheaves}
We now turn our attention to sheaves. First let $\C=\Alg_R^{op}$ be the opposite category to the category of $R$-algebras. For an $R$-algebra $S$, let $\Spec S$ denote the associated object in $\C$.

 We make $\C$ a site by saying that $\Spec S_2 \to\Spec S_1$ is a covering if the associated ring map $S_1\to S_2$ is faithfully flat. To an $R$ module $M$, we can associate a presheaf on this site denote by $\widetilde{M}$ where $(\Spec S)(\widetilde{M})=M\otimes_R S$. Then Theorem \ref{thm:faithfulflat} implies that $\widetilde{M}$ is a sheaf.

Now let $S$ be a scheme. Let $S_{\mathrm{fppf}}$ be the fppf site over $S$: the objects are $S$-schemes, morphisms are scheme morphisms over $S$, and coverings are faithfully flat and locally finitely presented morphisms. For any $S$-scheme $X$,  $F_X(Y)=\Hom_S(Y,X)$ defines a presheaf $F_X$ on $S_{\mathrm{fppf}}$. Theorem \ref{thm:faithfulflat} implies:
\begin{thm}[{\cite[\href{https://stacks.math.columbia.edu/tag/023Q}{Tag 023Q}]{stacks-project}}]
  The presheaf $F_X$ is a sheaf on $S_{\mathrm{fppf}}$.
\end{thm}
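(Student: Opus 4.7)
The plan is to verify the sheaf axiom for $F_X$ directly: for any fppf covering $\{Y_i \to Y\}$ in $S_{\mathrm{fppf}}$, the diagram
\[
F_X(Y) \to \prod_i F_X(Y_i) \rightrightarrows \prod_{i,j} F_X(Y_i \times_Y Y_j)
\]
should be an equalizer. By replacing $\{Y_i \to Y\}$ with the single morphism $\bigsqcup_i Y_i \to Y$, I reduce to the case of a single surjective, flat, locally finitely presented map $Y' \to Y$. Using that representable presheaves are already Zariski sheaves (morphisms between schemes glue along Zariski covers), I further reduce to the case $Y = \Spec R$, $Y' = \Spec S$ with $R \to S$ faithfully flat and of finite presentation, and I will cover $X$ by affine opens to localize the target when needed.

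For injectivity of $F_X(Y) \to F_X(Y')$, two morphisms $f,g \colon Y \to X$ pulling back to the same morphism $Y' \to X$ agree on the underlying topological spaces (since $Y' \to Y$ is surjective) and induce the same local ring maps (since for any lift $y' \in Y'$ of $y \in Y$, the map $\O_{Y,y} \hookrightarrow \O_{Y',y'}$ is injective by faithful flatness). For exactness at the middle term, let $g \colon Y' \to X$ be a morphism whose two pullbacks to $Y' \times_Y Y'$ coincide. As a map of sets I set $f(y) := g(y')$ for any preimage $y'$ of $y$; well-definedness comes from applying the equalizer hypothesis to two preimages viewed as a point of $Y' \times_Y Y'$. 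Continuity of $f$ follows because fppf morphisms are open and hence topological quotients. For the morphism of ringed spaces I work locally on $X$: given an affine open $U = \Spec A \subseteq X$, the open $V' := g^{-1}(U) \subseteq Y'$ descends to an open $V \subseteq Y$ with $f(V) \subseteq U$, and the restriction $g|_{V'}$ corresponds to a ring homomorphism $A \to \Gamma(V', \O_{Y'})$ whose two composites into $\Gamma(V' \times_V V', \O)$ agree. By the equalizer description in Theorem \ref{thm:faithfulflat} this map factors uniquely through $\Gamma(V, \O_Y)$, producing a morphism $V \to U$.

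The main technical obstacle is verifying that the locally constructed morphisms $V \to U$ assemble into a single morphism $f \colon Y \to X$ of $S$-schemes. This is handled by combining the uniqueness established above with the Zariski sheaf property on $Y$, which forces the local pieces to glue. One must also check that $f^{-1}(U) \subseteq Y$ is open, which follows from submersiveness of the fppf map $Y' \to Y$ applied to $V' = g^{-1}(U)$. By construction, the pullback of $f$ along $Y' \to Y$ equals $g$, so $f$ is the desired element of the equalizer, completing the verification of the sheaf axiom.
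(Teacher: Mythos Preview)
The paper does not supply its own proof of this theorem; it is stated with a citation to the Stacks Project, prefaced only by the remark that Theorem~\ref{thm:faithfulflat} implies it. Your proposal is essentially the standard argument found there: reduce to an affine base via the Zariski sheaf property, and then use the exactness of $0 \to R \to S \to S\otimes_R S$ coming from faithfully flat descent of modules to manufacture the ring map on the target side.

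One small point worth tightening: when you invoke Theorem~\ref{thm:faithfulflat} to factor $A \to \Gamma(V',\O_{Y'})$ through $\Gamma(V,\O_Y)$, the open $V \subseteq \Spec R$ obtained by descending $g^{-1}(U)$ need not be affine, so the cited theorem (stated for ring maps) does not apply verbatim. The fix is immediate---cover $V$ by standard opens $D(f) = \Spec R_f$, whose preimages in $Y'$ are $\Spec S_f$, apply the ring-level equalizer there, and then glue using the Zariski sheaf property on $Y$---but as written your appeal to Theorem~\ref{thm:faithfulflat} at that step is slightly informal. With that adjustment the argument is complete and matches the intended route.
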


Let us expand upon what it means for $F_X$ to be a sheaf. Let $T$ be another $X$-scheme, and let $T'\to T$ be a faithfully flat cover. The question we seek to answer is: given a map $T'\to X$, when is there a map $T\to X$ making the following commute:
\[
  \begin{tikzcd}
    T'\ar[d]\ar[dr]& \\
    T\ar[r]&X.
  \end{tikzcd}
  \]

That $F_X$ is a sheaf implies that $F_X(T)$ is the equalizer in the sequence $F_X(T)\to F_X(T')\rightrightarrows F_X(T'\times_T T')$. Then this gives that a map $T'\to X$ comes from a map $T\to X$ if and only if two compositions $T'\times_T T'\xrightarrow{\pi_i}T'\to X$ are equal.

We give another consequence of the fact that $F_X$ is a sheaf. Let $X'\to X$ be a faithfully flat cover. Let $T$ be an $S$-scheme with a map $T\to X$. Define $T'=T\times_X X'$; then we get an associated map $T'\to X'$. The two composites $T'\times_T T'\xrightarrow{\pi_i}T'\to X'$ define a map $T'\times_T T'\to X'\times_X X'$. This map makes the following diagram commute (where the vertical arrows are projections):

\begin{equation}\label{eqn:ff}
  \begin{tikzcd}
    T'\times_T T'\ar[xshift=3pt]{d} \ar[xshift=-3pt]{d}\arrow{r}&  X'\times_{X}X' \ar[xshift=3pt]{d} \ar[xshift=-3pt]{d}\\
    T'\arrow{r}& X'.
  \end{tikzcd}
\end{equation}

Again we can go backwards: another way we can specify a map $T\to X$ is to choose an fppf cover $T'\to X'$ and a map $T'\times_T T'\to X'\times_X X'$ making Equation \ref{eqn:ff} commute.

That this defines a map $T\to X$ again follows from the fact that $F_X$ is a sheaf. This strategy will inform our method of defining maps to stacks.

\subsection{Algebraic spaces and stacks}\label{sec:stackssetup}

Let $X$ be an algebraic space over $S$. As algebraic spaces are quotients of schemes by \'etale equivalence relations, there is a scheme $Y$ with surjective \'etale map to $X$. If we let $R=Y\times_X Y$, then $R$ is a subscheme of $Y\times Y$ whose two projections $R\rightrightarrows Y$ are \'etale. Specifying the data $R\rightrightarrows Y$ specifies $X$ as the quotient of the sheaf $F_Y$ by the equivalence relation defined by $R$.

If $T$ is an $S$-scheme, one way of giving a map $T\to X$ is to find an \'etale cover $T'\to T$ with map $T'\to Y$ and a map $T'\times_T T'\to Y\times_X Y$ making the following diagram commute:
\[
  \begin{tikzcd}
    T'\times_T T'\ar[xshift=3pt]{d} \ar[xshift=-3pt]{d}\arrow{r}&  R \ar[xshift=3pt]{d} \ar[xshift=-3pt]{d}\\
    T'\arrow{r}& Y.
  \end{tikzcd}
\] Thus we can define a map $T\to X$ using only maps between schemes.

Now we turn to algebraic stacks. Let $\Xf$ be an algebraic stack over $S$. Then there is a scheme $X$ with a smooth cover $\pi: X\to \Xf$. Then $R=X\times_{\Xf}X$ is an algebraic space with two projections $R\rightrightarrows X$.

Let $T$ is an $S$-scheme. We may begin to give a map $f:T\to \Xf$ similarly as before, by giving a smooth cover $T'\to T$, a map $T'\to X$, and a map $f:T'\times_T T'\to X\times_{\Xf}X$ making the following again commute:
\[
  \begin{tikzcd}
    T'\times_T T'\ar[xshift=3pt]{d} \ar[xshift=-3pt]{d}\arrow{r}& R \ar[xshift=3pt]{d} \ar[xshift=-3pt]{d}\\
    T'\arrow{r}& X.
  \end{tikzcd}
\] 

However, for stacks this is not sufficient.

The map $T'\times_T T'\to X\times_{\Xf}X$ defines a map $(T'\times_T T')\times_S (T'\times_T T')\to (X\times_{\Xf}X)\times_S (X\times_{\Xf}X)$, which we may restrict to a map $(T'\times_T T')\times_{T'} (T'\times_T T')\to (X\times_{\Xf}X)\times_S (X\times_{\Xf}X)$, and the commutativity of the last diagram guarantees the image lands in $(X\times_{\Xf}X)\times_X (X\times_{\Xf}X)$. Thus this defines a map $(T'\times_T T')\times_{T'} (T'\times_T T')\to (X\times_{\Xf}X)\times_X (X\times_{\Xf}X)$, but this is canonically isomorphic to a morphism $T'\times_T T'\times_T T'\to X\times_{\Xf}X\times_{\Xf}X$.

The condition for the maps $T'\to X$ and $T'\times_T T'\to R$ to define a map $T\to \Xf$ is for the following diagram to commute (where the vertical arrows are the projections): 
\[\begin{tikzcd}
    T'\times_T T'\times_TT'\ar[xshift=6pt]{d} \ar[xshift=-6pt]{d}\ar{d}\arrow{r}&X\times_{\Xf}X\times_{\Xf}X\ar[xshift=6pt]{d} \ar[xshift=-6pt]{d}\ar{d}\\
    T'\times_TT'\arrow{r}&  X\times_{\Xf}X 
  \end{tikzcd}.\]

\begin{rmk}
  For stacks the map $T'\times_T T'\to R$ is additional information, whereas for algebraic spaces the question is only whether or not one exists.
\end{rmk}

\section{Conventions and notations}
Throughout the text all topological rings are assumed to be Hausdorff. When we work over a ring $R$, all algebraic spaces, schemes, and stacks are finite-type over $R$.

For $S'\to S$ a map of schemes and $\Fs$ an fppf sheaf on the category of $S'$-schemes, denote by $\Res_{S'/S}\Fs$ the sheaf such that for a $S$-scheme $T$ we have $(\Res_{S'/S}\Fs)(T)\coloneqq \Fs(T\times_S S')$. If $\Fs$ is an fppf sheaf on the category of $S$-schemes and $\Fs_{S'}$ its pullback to the category for $S'$-schemes, by abuse of notation we write $\Res_{S'/S}\Fs$ for $\Res_{S'/S}\Fs_{S'}$. For a $S$-scheme, $X$, we will identify $X$ with its associated fppf sheaf, allowing us to write $\Res_{S'/S}X$. Note that the sheaves obtained in this way are not necessarily representable.

\section{Topologies on points of varieties}\label{sec:pvar}
In this section $R$ is a topological ring. For a large class of $R$, we will explain how for any finite-type $R$-scheme $X$ we may define a topology on $X(R)$. The topology will satisfy the following axioms:

\begin{enumerate}[(1)]
\item If $X=\A^1_R$, then $X(R)=R$.
\item If $X\to Y$ is a morphism of finite-type $R$-schemes, then $X(R)\to Y(R)$ is continuous.
\item If $X$ is a finite-type $R$-scheme and $Y\hookrightarrow X$ is a closed immersion, then $Y(R)\subseteq X(R)$ is a closed subset with the subspace topology.
\item If $X$ and $Y$ are two finite-type $R$-schemes, then $(X\times_R Y)(R)=X(R)\times Y(R)$.
\item If $X$ is a finite-type $R$-scheme, and $Y\hookrightarrow X$ is an open immersion, then $Y(R)\subseteq X(R)$ is an open subset with the subspace topology.
\end{enumerate}

\begin{defi}\label{def:good}
  Let $R$ be a topological ring. A topology $X(R)$ for every finite-type $R$-scheme $X$ is called a {\em excellent topologization of $R$-points of finite-type $R$-schemes} if $(1)-(5)$ hold.

  Let $R$ be a topological ring. A topology $X(R)$ for every finite-type $R$-scheme $X$ is called a {\em good topologization of $R$-points of finite-type $R$-schemes} if $(1)-(4)$ hold.
\end{defi}

\begin{defi}\label{def:contint}
  If $R$ is topological ring and we give $R^\times$ the subset topology, then we say $R$ is {\em continuously invertible} if the map $R^\times\to R^\times$ given by $r\mapsto 1/r$ is continuous.
\end{defi}
This condition implies that if $f\in R[x_1,\ldots, x_n]$ and $U\subseteq R^n$ is such that $f(U)\subseteq R^\times$, then the map $U\to R$ given by $1/f(x_1,\ldots,x_n)$ is continuous.

By local topological ring, we mean a topological ring that is local as a ring. We will first explain how if $R$ is a continuously invertible topological ring, then $R$ has a unique excellent topologization of $R$-points of finite-type $R$-schemes.

First if $X=\A^n$ is affine space, then the space $X(R)=R^n$ we give product topology, which is required by properties $(1)$ and $(4)$. If $X\subseteq \A^n$ is a closed subscheme of affine space, we define the topology as the subspace topology $X(R)\subseteq R^n$, which is required by property $(3)$. We must show this is independent of the affine embedding.

Suppose $X$ has two embeddings into affine spaces, $i_1:X\hookrightarrow \A^n$ and $i_2:X\hookrightarrow \A^m$. Then there is a morphism $r_1:\A^n \to \A^m$ such that $r_1\circ i_1 =i_2$. Similarly, there is a morphism $r_2:\A^m\to \A^n$ such that $r_2\circ i_2=i_1$. As polynomial maps are continuous, $r_1$ and $r_2$ induce continuous maps between the two topologies on $X(R)$ defined by the two embeddings. Thus the topologies must be the same.

Next if $X=\Spec A$ for an finitely generated $R$-algebra $A$, and $f\in A$ and $U=\Spec A[1/f]\subseteq X$ is a distinguished open, we would like to see that the topology on $U(R)$ induced as a subspace of $X(R)$ is same as the topology on $U(R)$ when $U$ is viewed as an affine scheme. Choose a closed embedding $X\hookrightarrow \A^n$ and let $x_1,\ldots,x_n$ be the coordinates on $\A^n$. Let $U_{\mathrm{sub}}$ be the set $U(R)$ equipped with the subspace topology as a subspace $U(R)\subseteq X(R)\subseteq R^n$. Let $U_{\mathrm{aff}}$  be the set $U(R)$ equipped with the topology viewing $U$ as an affine scheme. Note that $U\subseteq \A^{n+1}$ as a closed subscheme since $U\cong \Spec A[x_{n+1}]/(x_{n+1}f-1)$. The map $(x_1,\ldots,x_n)\mapsto (x_1,\ldots,x_n,1/(f(x_1,\ldots,x_n))$ is defined on an open subset of $\A^n$ mapping to $\A^{n+1}$ and restricts to a continuous map $U_{\mathrm{sub}}\to U_{\mathrm{aff}}$. The projection onto the first $n$ coordinates gives a map $\A^{n+1}\to \A^n$ which gives a continuous map $U_{\mathrm{aff}}\to U_{\mathrm{sub}}$. Thus both must have the same topologies.

Let $X$ be such a finite-type scheme. First note any $R$-point of $X$ is contained in an affine subset of $X$. On each affine $U\subseteq X$, give $U(R)$ topology as an affine subset; this is required by $(5)$s. We must check that for two affines, $U,V$, the topologies agree on $U\cap V$. But $U\cap V$ is covered by opens which are distinguished in both $U$ and $V$. On these distinguished opens, we have shown that the topology induced from $U(R)$ and $V(R)$ match the topology given by viewing these distinguished opens as affine varieties; thus the topologies match on the overlap, so these match we have defined a topology on $X(R)$. It is straightforward to check properties $(1)-(5)$.

Now we consider a more general class of rings. Let $I$ be an index set, and for each $i\in I$ let $R_i$ be a continuously invertible topological local ring. Let $R=\prod_{i\in I}R_i$ be the product. We will describe how to give a good topologization on the $R$-points of finite-type $R$-schemes for this class of $R$. For any cofinite subset $J\subseteq I$, we set $R_J=\prod_{i\in J}R_i$. Then $R=(\prod_{i\in I\setminus J}R_i)\times R_J$.
 
As $R_i$ is a continuously invertible topological local ring, we have already described a topologization of the $R_i$-points of any finite-type $R_i$-scheme. Let $X$ be a finite-type $R$-scheme. For any cofinite $J\subseteq I$, and open $U\subseteq \prod_{i\in I\setminus J} X(R_i)$, consider $U\times X(R_J)\subseteq (\prod_{i\in I\setminus J} X(R_i))\times X(R_J)=X(R)$; we define the topology on $X(R)$ to have these subsets as a basis.

We check properties $(1)$ and $(2)$. First if $X=\A^1_R$, then $X(R)=R$ as sets. A basis for the topology is given by sets of the form $U\times R_J$ for cofinite subsets $J\subseteq I$ and open $U\subseteq \prod_{i\in I\setminus J}\A^1(R_i)$; this is precisely the topology on $R$ as a direct product so $(1)$ is established.

Let $f:X\to Y$ be a morphism of finite-type $R$-schemes. Let $J\subseteq I$ a cofinite subset and let $U\subseteq \prod_{i\in I\setminus J}Y(R_i)$ be an open . Let $g:\prod_{i\in I\setminus J}X(R_i)\to \prod_{i\in I\setminus J}Y(R_i)$ be induced from $f$; this is continuous as each $R_i$ is continuously invertible and local and we have checked the topologization is excellent for these rings. Therefore, $g^{-1}(U)$ is open in $\prod_{i\in I\setminus J}X(R_i)$. We then have that $f^{-1}(U\times Y(R_J))= g^{-1}(U)\times X(R_J)$ and this is open in $X(R)$ as it is in the described basis of opens. Therefore $X(R)\to Y(R) $ is continuous, so we have $(2)$.

To check property $(3)$, let $X\to Y$ be a closed immersion of finite-type $R$-schemes. We first check that image is a closed set. The image is the intersection over $j\in I$ of $X(R_j)\times\prod_{i\in I, i\neq j}Y(R_i)$, which are each closed, so the image is closed. Now we check that $X(R)\to Y(R)$ is a topological immersion. For any $i$, $X(R_i)\to Y(R_i)$ is a closed immersion, thus a subset of $X(R_i)$ is open if and only if it is the restriction of an open subset of $Y(R_i)$. For any cofinite $J$ and open subsets $V_i\subseteq Y(R_i)$ for $i\in I \setminus J$, the subset $( \prod_{i\in I\setminus J}V_i)\times Y(R_J)\subseteq Y(R)$ intersected with $X(R)$ is $( \prod_{i\in I\setminus J}V_i\cap X(R_i))\times X(R_J)\subseteq Y(R)$. As opens of the form $V_i\cap X(R_i)$ are a basis for the topology on $X(R_i)$, opens of the form $( \prod_{i\in I\setminus J}V_i\cap X(R_i))\times X(R_J)$ are a basis of the topology on $X(R)$. We conclude that $X(R)\to Y(R)$ is an embedding of topological spaces. 

Lastly we check property $(4)$. Let $X$ and $Y$ be two finite-type $R$-schemes. A basis of opens for the product $X(R)\times Y(R)$ may be constructed as follows: choose a cofinite $J\supseteq I$, and for each $i\in I\setminus J$ choose opens $V_i\in X(R_i)$ and $W_i \in Y(R_i)$, then take the open of $X(R)\times Y(R)$ given by $\prod_{i\in I\setminus J}(V_i\times W_i)\times \prod_{i\in J}(X(R_i)\times Y(R_i))$. Now  note that the opens of the form $V_i\times W_i$ are a basis fo $(X\times Y)(R_i)=X(R_i)\times Y(R_i)$. Thus identifying $(X\times_R Y)(R)$ and $X(R)\times Y(R)$, the product topology matches the topology on $(X\times_R Y)(R)$ as desired.

\begin{rmk}\label{rem:BC}
  In \cite{Conradadeles}[page 7], it is shown that if $R=\prod_i R_i$ where each $R_i$ is a local ring and $X$ is a finite-type, quasi-separated $R$-scheme then the natural map $X(R)\to \prod_i X(R_i)$ is bijective. In this case the topology we defined on $X(R)$ agrees with the product topology on $\prod_i X(R_i)$.

\end{rmk}

\begin{defi}
  Let $R$ a topological ring. We say a good topologization of $R$-points of finite-type $R$-schemes has the {\em smooth quotient property}  if for every smooth map $X\to Y$ of finite-type $R$-schemes such that $X(R)\to Y(R)$ is surjective, $X(R)\to Y(R)$ is a quotient.
\end{defi}

\begin{prop}\label{prop:sq}
  Let $I$ be an index set. For each $i\in I$, let $R_i$ be a continuously invertible local ring with the property that smooth maps of $R_i$-schemes induce open maps on $R_i$-points. Let $R=\prod_{i\in I}R_i$ be the restricted direct product. The good topologization of $R$-points of finite-type $R$-schemes defined above has the smooth quotient property.

  In fact if $f:X\to Y$ is a smooth morphism of finite-type $R$-schemes such that $X(R)\to Y(R)$ is surjective, then $X(R)\to Y(R)$ is an open map.
\end{prop}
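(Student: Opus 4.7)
The plan is to show directly that $f: X(R) \to Y(R)$ carries basic opens to basic opens. Recall a basis of the topology on $X(R)$ is given, for each cofinite $J \subseteq I$ and each open $U \subseteq \prod_{i \in I \setminus J} X(R_i)$, by the sets $U \times X(R_J)$, using the canonical decomposition $X(R) = \bigl(\prod_{i \in I\setminus J} X(R_i)\bigr) \times X(R_J)$ which is natural in $X$ (cf.\ Remark \ref{rem:BC} applied to the product $R = R_{I\setminus J} \times R_J$).

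First I would reduce to componentwise surjectivity. Assume $X(R)$ is nonempty, the empty case being vacuous. Since $X(R)$ and $Y(R)$ decompose as products indexed by $i \in I$, surjectivity of $X(R) \to Y(R)$ forces each $Y(R_i)$ to be nonempty, and then given $y \in Y(R_i)$ one picks arbitrary components in $Y(R_j)$ for $j \neq i$, lifts to $X(R)$, and projects to the $i$-th coordinate to produce a preimage in $X(R_i)$. Hence each $f_i \colon X(R_i) \to Y(R_i)$ is surjective, and by the hypothesis on $R_i$ each $f_i$ is therefore open.

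Now I would take a basic open $U \times X(R_J) \subseteq X(R)$ with $J$ cofinite. By functoriality of the decomposition, $f$ restricts on these factors to $f_{I\setminus J} \times f_J$ where $f_{I\setminus J} = \prod_{i \in I\setminus J} f_i$ and $f_J$ is induced by $f$ on $R_J$-points. Hence
\[
f(U \times X(R_J)) \;=\; f_{I\setminus J}(U) \,\times\, f_J(X(R_J)).
\]
Since $I \setminus J$ is finite and each $f_i$ is open, the product map $f_{I\setminus J}$ is open (a finite product of open maps is open, by writing any open as a union of open boxes), so $f_{I\setminus J}(U)$ is open in $\prod_{i \in I\setminus J} Y(R_i)$. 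By the same projection argument as in the preceding step applied to the factorization $R = R_{I\setminus J} \times R_J$, the map $f_J \colon X(R_J) \to Y(R_J)$ is surjective, so $f_J(X(R_J)) = Y(R_J)$. Therefore $f(U \times X(R_J)) = f_{I\setminus J}(U) \times Y(R_J)$, which is basic open in $Y(R)$. This shows $f$ is open, and since a continuous surjective open map is a quotient, the smooth quotient property follows.

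The only real point requiring care is the compatibility of the product decomposition $X(R) = X(R_{I\setminus J}) \times X(R_J)$ with the morphism $f$; this is purely formal once one invokes the bijection of Remark \ref{rem:BC}. Beyond that, the argument is a direct computation using (i) openness of each $f_i$, which is supplied by the hypothesis on the $R_i$, and (ii) surjectivity of $f_J$, which is extracted from the global surjectivity hypothesis by the same projection trick.
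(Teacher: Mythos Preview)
Your argument is correct and follows essentially the same route as the paper: reduce to basic opens $W\times X(R_J)$, then show their image is $f_{I\setminus J}(W)\times Y(R_J)$, which is again basic open. The paper is terser---it asserts $f(V)=f(W)\times Y(R_J)$ without spelling out the projection trick for surjectivity of $f_J$, and it does not separately verify that a finite product of open maps is open---but the skeleton is identical. One small redundancy in your write-up: establishing surjectivity of each individual $f_i$ is not actually needed, since openness of $f_i$ comes directly from the hypothesis on $R_i$ and the only surjectivity you use later is that of $f_J$, which you correctly derive from global surjectivity via the two-factor splitting $R=R_{I\setminus J}\times R_J$.
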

\begin{proof}
  In the case when $Y(R)=\emptyset$, the result is clear so we assume that $Y(R)$ is not empty.
  
By abuse of notation, for any cofinite $J\subseteq I$, we denote by $f$ the induced map $\prod_{i\in I\setminus J}X(R_i)\to \prod_{i\in I\setminus J}Y(R_i)$.

Let $U\subseteq X(R)$ be an open subset. Let $V\subsetq U$ be an open subset of the form $W\times X(R_J)$ for some open $W\subseteq  \prod_{i\in I\setminus J}X(R_i)$. Such opens cover $U$. Now $f(W)\subseteq \prod_{i\in I\setminus J}Y(R_i)$ is open as smooth maps induce open maps on $R_i$-points by hypothesis. Therefore $f(V)=f(W)\times Y(R_J)$ is open. The set $f(U)$ is a union of these $f(V)$ and is thus open as claimed.
\end{proof}

\section{Definitions}\label{sec:def}

\begin{defi}
  Let $S$ be a scheme and $T$ be $S$-scheme. Let $n\geq 0$
  
  If $S$ is the spectrum of field $k$, then $T=\Spec R$ for some $R$, and we define the {\em degree} of $T$ over $S$ to be $\dim_k R$, the dimension of $R$ as a $k$-vector space.

  For general scheme $S$, and for any $n\geq 0$, we say that $T\to S$ has {\em degree less than or equal to $n$} if for each $s\in S$, if $k(s)$ is the residue field at $s$, then $T\times_S \Spec k(s)$ has degree degree less than or equal to $n$.

  If $R\to R'$ is a morphism of rings such that $\Spec R'\to \Spec R$ is quasi-finite, we say that $R\to R'$ has {\em degree less than or equal to $n$} if $\Spec R'\to \Spec R$ has degree less than or equal to $n$.
\end{defi}

\begin{rmk}
  If $T\to S$ is a quasi-finite and representable map of algebraic stacks, then for every field $k$ with map $\Spec k\to S$, the pullback  $T\times_S\Spec k$ is a scheme. In this we can extend the definition of having degree less than or equal to $n$ to this situation.

  The map $T\to S$ is of degree less than or equal to $n$ if for all fields $k$ and all morphisms $\Spec k \to S$, $T\times_S \Spec k\to \Spec k$ is of degree less than or equal to $n$.
\end{rmk}

\begin{defi}\label{def:suffdisc}
  Let $R$ be a ring. We say that $R$ is {\em sufficiently disconnected}

  \begin{enumerate}[(1)]
  \item Finitely generated projective modules over $R$ are free.
  \item For any faithfully flat \'etale $R\to R'$ , there exists an $R'$-algebra $R''$ such that $R''$ is finite \'etale over $R$.
  \end{enumerate}
\end{defi}

\begin{lem}\label{lem:suffdisc}
  Let $R$ be a sufficiently disconnected ring. For any $n\geq 0$ and any faithfully flat \'etale $R\to R'$ of degree less than or equal to $n$, there exists an $R'$-algebra $R''$ such that $R''$ is finite \'etale and free over $R$ of rank $n!$.
\end{lem}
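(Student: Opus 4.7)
The plan is to apply property (2) to obtain a finite \'etale refinement of $R'$, use property (1) to conclude that $R'$ itself is already finite \'etale over $R$, and then pad by a trivial factor to reach rank exactly $n!$.

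I would begin by applying property (2) to the faithfully flat \'etale map $R \to R'$, producing an $R'$-algebra $A$ that is finite \'etale over $R$; by (1), $A$ is then $R$-free of some constant rank. The main task is to show that $R'$ itself is finite \'etale over $R$. First, property (1) forces $\Spec R$ to admit no nontrivial clopen decomposition (otherwise one easily constructs a finitely generated projective $R$-module of non-constant rank, contradicting (1)), so the degree of the cover $R \to R'$ is constant, equal to some integer $m$ with $0 \leq m \leq n$. Next, combining the existence of the finite \'etale approximation $A$ with property (1), one argues that the finitely presented flat $R$-algebra $R'$ is finitely generated as an $R$-module, and hence free of rank $m$ by (1).

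With $R'$ now known to be finite \'etale over $R$ of rank $m \leq n$, I would set $R'' := R' \times R' \times \cdots \times R'$, a direct product of $n!/m$ copies of $R'$ as a ring (with $n!/m$ a positive integer because $m \leq n$ forces $m \mid n!$), equipped with the $R'$-algebra structure given by the diagonal map $r \mapsto (r, r, \ldots, r)$. Then $R''$ is finite \'etale over $R$ of rank $m \cdot (n!/m) = n!$, and it is free of rank $n!$ as an $R$-module since $R'$ is free of rank $m$.

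The main obstacle is the finiteness of $R'$ over $R$: faithful flatness combined with bounded fiber degree does not, in general, imply finiteness, and the upgrade is precisely where sufficient disconnectedness is used — property (2) furnishes the finite \'etale approximation $A$, and property (1) rigidifies the situation enough to force $R'$ to inherit finiteness. Once this finiteness is in hand, the padding construction in the previous paragraph is immediate.
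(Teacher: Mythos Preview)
Your proposal has a genuine gap: the claim that $R'$ itself must be finite over $R$ is false, and this is the heart of your argument. Take $R=\Z_p$ (which is sufficiently disconnected by Proposition~\ref{prop:x8x8}) and $R'=\Z_p\times\Q_p$. Both factors are \'etale over $\Z_p$, the map is faithfully flat, and the fibre degree is $1$ at the closed point and $2$ at the generic point, so $R\to R'$ has degree $\le 2$. Yet $R'$ is not finitely generated as an $R$-module. This example also shows that your preliminary step fails: the fibre degree of an \'etale (as opposed to \emph{finite} \'etale) cover need not be locally constant on the base, so connectedness of $\Spec R$ does not force a constant degree~$m$. The vague appeal to ``property~(2) plus property~(1) forces $R'$ finite'' cannot be made precise, because it is simply not true.

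The paper's proof sidesteps this by never trying to control $R'$ itself. After obtaining the finite \'etale $R'$-algebra $S$ from property~(2), it passes to the \emph{image} $T$ of $R'$ inside $S$. Now $T$ is simultaneously a quotient of $R'$ (so its degree over $R$ is still $\le n$) and a subalgebra of the module-finite $S$ (so it is a finite $R$-module). A short argument using Lemma~\ref{lem:22223} and noetherian approximation shows $T$ is finite \'etale and, by property~(1), free of some rank $m\le n$. One then sets $R''=T^{n!/m}$ with the $R'$-algebra structure coming from the surjection $R'\twoheadrightarrow T$ followed by the diagonal. Your padding idea in the final step is exactly right; what is missing is the correct object to pad, namely $T$ rather than $R'$.
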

\begin{proof}
  Given $R\to R'$, let $S$ be finite \'etale $R'$-algebra given by Definition \ref{def:suffdisc}.

  Let $T\subseteq S$ be the image of $R'$. Applying Lemma \ref{lem:22223}, we conclude that $T$ is \'etale over $R$. Since $R'$ and $S$ are \'etale over $R$ they are finitely presented over $R$, so there is a noetherian subring over which $R'$ and $S$ are defined; $T$ is then defined too over that noetherian subring, and since it is a subalgebra of the module finite $S$, we conclude that $T$ too is finitely presented as an $R$-module. As $T$ is both finitely presented and flat over $R$ as an $R$-module, $T$ is a projective $R$-module. Using property $(1)$ of sufficiently disconnected, we see that $T$ is moreover free as an $R$-module. Since $T$ is a quotient of $R'$ it must have rank $m$ less than or equal to $n$. Let $R''=T^{n!/m}$. Then the diagonal defines an $R$-algebra homomorphism $T\to R''$ and the composite $R'\to T\to R''$ makes $R''$ an $R'$-algebra. Finally, by construction $R''$ is finite \'etale and free of rank $n!$ over $R$.  
\end{proof}

The following is a scheme-theoretic version of Lemma~\ref{lem:suffdisc}.
\begin{lem}\label{rem:suffdisc}  
  Let $R$ be a sufficiently disconnected ring. Let $U\to V$ be a separated, surjective \'etale map of algebraic stacks over $R$ which is representable and of degree less than or equal to $n$. Then for any $v\in V(R)$, there is a finite \'etale $R$-algebra $S$ of rank $n!$ and $u\in U(S)$ such that $u$ and $v$ have the same image in $V(S)$.
\end{lem}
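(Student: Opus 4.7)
The strategy is to reduce, via base change, to the ring-theoretic Lemma~\ref{lem:suffdisc}. Given $v\in V(R)$, pull back $U\to V$ along $v\colon \Spec R \to V$ to form $U' := U\times_V \Spec R$. Since $U\to V$ is representable, $U'$ is an algebraic space, and the properties of being separated, surjective, \'etale, and of degree $\le n$ are all stable under base change, so $U'\to\Spec R$ inherits them. Being quasi-finite (from the fiberwise degree bound) and separated over the scheme $\Spec R$, $U'$ is in fact representable by a scheme, and by Zariski's main theorem it is quasi-affine over $\Spec R$.

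The key step is then to exhibit a faithfully flat \'etale $R$-algebra $R'$ of degree $\le n$ together with an $R$-morphism $\Spec R' \to U'$. Granted this, Lemma~\ref{lem:suffdisc} immediately furnishes an $R'$-algebra $S$ that is finite \'etale of rank $n!$ over $R$, and the composite $\Spec S \to \Spec R' \to U' \to U$ provides the required $u\in U(S)$ whose image in $V(S)$ agrees with $v$.

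To construct $R'$, cover the quasi-affine $U'$ by finitely many affine opens $\Spec R_1, \dots, \Spec R_k$. Each $R_i$ is \'etale over $R$ of degree $\le n$ because the fiberwise bound is inherited from $U'$. Collectively the $\Spec R_i$ surject onto $\Spec R$, but the product $\prod_i R_i$ may have total degree exceeding $n$. To cut this down, one invokes property~(1) of sufficient disconnectedness: the freeness of finitely generated projective $R$-modules provides enough idempotents in $R$ to witness a clopen decomposition of $\Spec R$ on which a single $R_i$ already covers surjectively. Gluing these local choices across the strata yields a single affine faithfully flat \'etale $R$-algebra $R'$ of degree $\le n$ equipped with a map $\Spec R' \to U'$.

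The principal obstacle is this final combinatorial/gluing step that cuts an arbitrary affine cover down to a single affine of bounded degree; once this is carried out the result reduces to a direct invocation of Lemma~\ref{lem:suffdisc}.
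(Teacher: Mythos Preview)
Your reduction via base change to $U' = U\times_V \Spec R \to \Spec R$ is exactly how the paper begins, and the observation that $U'$ is a scheme (separated, locally quasi-finite over an affine) is correct, though the paper is content to leave it as an algebraic space. The divergence, and the real gap, is in how you propose to manufacture the bounded-degree $R'$.

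Your plan is to cover $U'$ by affine opens $\Spec R_i$ and then use property~(1) to find idempotents in $R$ giving a clopen partition of $\Spec R$ subordinate to the images of the $\Spec R_i$. But property~(1) does not furnish idempotents; read literally, it \emph{forbids} them. If $e\in R$ is a nontrivial idempotent, then $Re$ is a finitely generated projective $R$-module of non-constant rank (rank~$1$ on $D(e)$, rank~$0$ on $D(1-e)$), hence not free. So ``projectives are free'' pushes toward connectedness, not disconnectedness, and even under the weaker reading ``constant-rank projectives are free'' it says nothing about refining an open cover to a clopen one. The gluing step you flag as the principal obstacle is not merely a bookkeeping detail---your proposed mechanism for it does not work.

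The paper avoids this obstacle by never insisting that the initial \'etale algebra have degree $\le n$. It takes \emph{any} affine \'etale cover $W=\Spec R'\to U'$ (so $R'$ is faithfully flat \'etale over $R$ but of uncontrolled degree), applies property~(2) of Definition~\ref{def:suffdisc} directly to get an $R'$-algebra $R''$ which is finite \'etale over $R$, and then takes the scheme-theoretic image of $\Spec R''\to U'$. By Remark~\ref{rem:22223} (the algebraic-space version of Lemma~\ref{lem:22223}), this image is a clopen $\Spec S'\hookrightarrow U'$ that is finite \'etale over $R$; since it sits inside $U'$, its degree over $R$ is automatically $\le n$. Property~(1) is then invoked only to say that the finite projective $R$-module $S'$ is free, hence of some constant rank $m\le n$, and one sets $S=(S')^{n!/m}$. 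The ``image trick'' via Lemma~\ref{lem:22223} is the idea you are missing: it extracts a bounded-degree finite \'etale piece from inside $U'$ without any combinatorial partition of $\Spec R$.
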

\begin{proof}

  Consider the pullback $U\times_{V,v}\Spec R\to \Spec R$. This is a separated algebraic space that surjective \'etale over $\Spec R$. If we find a finite \'etale $R$-algebra $S$ of rank $n!$ and $u'\in (U\times_{V,v}\Spec R)(S)$, then if we denote the composite $\Spec S\xrightarrow{u'} U\times_{V,v}\Spec R\to U$ by $u$, $u$ has the desired properties. In this way we reduce to the case when $U$ is a separated algebraic space and $V=\Spec R$.

  Now let $W\to U$ be an \'etale cover by an affine scheme. Then $W=\Spec R'$ for some \'etale $R$-algebra $R'$. Applying Definition \ref{def:suffdisc} gives an $R'$-algebra $R''$ which is finite \'etale over $R$. By Remark \ref{rem:22223}, the image of $\Spec R''\to U$ is finite \'etale over $R$, and thus an affine $R$-scheme. This image is of the  form $\Spec S'$ for some $R$-algebra $S'$. We have a map $u':\Spec S'\to U$ by construction. Now we may take a finite partition of $\Spec R$ by open sets such that over each open set in the partition $\Spec S'$ has constant degree. It suffices to prove the result separately over each open in the partition. Thus passing to an open set in this partition, we may assume that $S'$ is finite \'etale of some fixed rank $m$ over $R$. Set $S=(S')^{n!/m}$. The diagonal defines a map $S'\to S$, we have a composite map $\Spec S\to \Spec S'\xrightarrow{u'}U$ which we denote by $u$. Then $S$ and $u$ have the desired properties.
\end{proof}

\begin{prop}\label{prop:x8x8}
  Complete noetherian local rings are sufficiently disconnected.
\end{prop}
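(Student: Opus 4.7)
The plan is to verify the two conditions of Definition~\ref{def:suffdisc} separately, using crucially that a complete noetherian local ring $R$ with maximal ideal $\m$ and residue field $k = R/\m$ is Henselian. Condition~(1) is classical: over any local ring every finitely generated projective module is free. One lifts a $k$-basis of $M/\m M$ to a minimal generating set of $M$, so $M$ is a quotient of a free $R$-module $F$ of the correct rank, and since $M$ is projective the resulting short exact sequence splits; the kernel is then a finitely generated direct summand contained in $\m F$, hence vanishes by Nakayama.

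For condition~(2), let $R \to R'$ be faithfully flat \'etale. Because \'etale morphisms are of finite presentation, the fiber $R'/\m R'$ is a finite \'etale $k$-algebra, and hence a finite product $\prod_{i=1}^r k_i$ of finite separable extensions of $k$, with $r \geq 1$ by faithful flatness. Pick a maximal ideal $\mathfrak{m}' \subseteq R'$ lying over $\m$, set $k' := R'/\mathfrak{m}'$, and take $R'' := R'_{\mathfrak{m}'}$ equipped with the canonical localization map $R' \to R''$. It remains to check that $R''$ is finite \'etale over $R$. \'Etaleness is automatic (localization of \'etale is \'etale), so the only issue is finiteness. As a candidate comparison object, lift the minimal polynomial $\bar g(x) \in k[x]$ of a primitive element of $k'/k$ to a monic $g(x) \in R[x]$ and form $S := R[x]/(g(x))$; since $\bar g$ is separable and $R$ is Henselian, $S$ is a finite \'etale local $R$-algebra with residue field $k'$.

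The main obstacle is identifying $R'_{\mathfrak{m}'}$ with $S$, which is where the Henselian hypothesis truly enters. The plan is to invoke the standard consequences of Hensel's lemma for $R$ Henselian local: first, that the category of finite \'etale $R$-algebras is equivalent, via reduction modulo $\m$, to the category of finite \'etale $k$-algebras, so that $S$ is pinned down up to unique isomorphism by its residue field $k'$; and second, that any \'etale local $R$-algebra essentially of finite type whose residue field is finite over $k$ is automatically finite over $R$ (a statement of the form found in the Stacks Project chapter on Henselian local rings). Applied to the local, essentially-of-finite-type, \'etale $R$-algebra $R'_{\mathfrak{m}'}$ with residue field $k'$, these facts force $R'_{\mathfrak{m}'}$ to be finite \'etale over $R$, isomorphic to $S$, which completes the verification of~(2).
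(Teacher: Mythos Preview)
Your proof is correct. For condition~(2) you take a somewhat different route from the paper: the paper first proves an intermediary Lemma~\ref{8989} showing that for any \'etale $R$-algebra $R'$ the $\m$-adic completion $\widehat{R'}$ is finite \'etale over $R$ and in fact splits off as a direct factor $R' \cong \widehat{R'} \times S$, and then simply sets $R'' = \widehat{R'}$. You instead localize $R'$ at a single maximal ideal $\m'$ lying over $\m$ and cite as a black box the Henselian structure result that such a localization is automatically finite over $R$. Both produce a valid finite \'etale $R''$ with an $R'$-algebra structure; the paper's argument is more self-contained (completeness is used to \emph{prove} the relevant structure result rather than to justify citing it), while yours is shorter once one accepts the Stacks reference. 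One cosmetic point: the parenthetical ``localization of \'etale is \'etale'' is not literally correct since \'etale maps must be of finite type, but since you immediately turn to establishing finiteness this causes no genuine gap. The comparison object $S = R[x]/(g(x))$ you construct is also not strictly needed once the black-box finiteness statement is in hand, though it does make the shape of $R'_{\m'}$ explicit.
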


Before we prove this, we prove an intermediary lemma.

\begin{lem}\label{8989}
  Let $R$ be a complete noetherian local ring. Let $R'$ be an \'etale $R$-algebra. Then $R'$ is of the form $T\times S$ where $T$ is finite \'etale over $R$ and $S$ is  \'etale but not faithfully flat over $R$.
\end{lem}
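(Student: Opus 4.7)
The plan is to reduce to the structure theorem for quasi-finite algebras over a Henselian local ring; this is where the completeness of $R$ is used. First observe that since $R$ is complete noetherian local, it is Henselian; write $\m$ for the maximal ideal and $k$ for the residue field. Because $R'$ is \'etale over $R$, it is flat, unramified, and (under the finite-type conventions in force) finitely presented as an $R$-algebra, hence quasi-finite over $R$.

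Now I would invoke the structure theorem for quasi-finite algebras over a Henselian local ring (Stacks Project, Tag 04GG): any such $R'$ admits a canonical decomposition
\[
R' = A_1 \times \cdots \times A_n \times B,
\]
where each $A_i$ is a finite local $R$-algebra and $B$ has no prime lying over $\m$. Set $T \coloneqq A_1 \times \cdots \times A_n$ and $S \coloneqq B$, so that $R' = T \times S$.

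It remains to verify the two claimed properties. Since $R'$ is \'etale over $R$ and $T$, $S$ arise as the images of the orthogonal idempotents of the product decomposition, both factors inherit \'etaleness (each is a quotient of $R'$ by an idempotent ideal, corresponding to a clopen subscheme of $\Spec R'$). The factor $T$ is finite over $R$ by construction and so $T$ is finite \'etale over $R$. For $S$, the condition that no prime of $S$ lies over $\m$ translates to $S \otimes_R k = 0$, so $\Spec S \to \Spec R$ misses the closed point; hence $S$ is not faithful over $R$, and so not faithfully flat.

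The only non-trivial step is the Henselian decomposition, and I would simply quote it rather than prove it from scratch; everything else is bookkeeping. One minor subtlety to be careful about is that ``étale'' in the statement must include the finite-type (equivalently, finite presentation) hypothesis, which is consistent with the paper's global convention; without this, $R'$ need not be quasi-finite and the structure theorem would not apply directly.
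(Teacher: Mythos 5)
Your proof is correct, but it takes a genuinely different route from the paper's. The paper constructs the finite \'etale factor $T$ concretely as the $\m$-adic completion $\widehat{R'}$: using completeness of $R$, it observes that each $R'/\m^e R'$ is finite \'etale over $R/\m^e R$ (quasi-finite over Artinian local is finite), so $\widehat{R'}$ is finite \'etale over $R=\widehat R$; Nakayama gives surjectivity of $R'\to\widehat{R'}$, and the paper's Lemma~\ref{lem:22223} on clopen images then splits $R'=\widehat{R'}\times S$. You instead quote the Henselian structure theorem for quasi-finite algebras outright. Both arguments are valid, and in fact produce the same $T$ (the completion isolates exactly the primes over $\m$). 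The trade-off: your argument is shorter and, as you note, really only needs Henselianness rather than completeness, so it proves something slightly more general; the paper's argument is more self-contained, leaning only on the earlier clopen-image lemma rather than importing the full structure theorem, and it exhibits $T$ explicitly. One small caveat: the precise Stacks Project tag you cite may not be exactly Tag 04GG, so you should verify the reference, but the theorem you are invoking is real and standard. Your observation that \'etale in this paper is automatically finite type (hence the decomposition theorem applies) is correct and worth keeping.
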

\begin{proof}
  Let $\m$ be the maximal ideal of $R$. Let $\widehat{R'}$ be the $\m$-adic completion of $R'$.

  Because $R\to R'$ is \'etale, $R'/\m^eR'$ is \'etale over $R/\m^e R$. Furthermore, $R'/\m^eR'$ is finite over $R/\m^eR$ as it is quasi-finite over an artinian local ring. Thus, we have that $\widehat{R'}$ is finite \'etale over $R=\widehat{R}$. Set $T=\widehat{R'}$.

  Now as $R'\to R'/\m R'$ is surjective, by Nakayama's lemma we conclude that $R'\to T$ is surjective. By Lemma \ref{lem:22223}, $\Spec \widehat{R}'\to \Spec R'$ has clopen image. Therefore,  we may write $R'=\widehat{R'}\times S$ for some (necessarily \'etale) $R$-algebra $S$. By construction $R'/mR'\to T/\m T$ is surjective, so $S/mS =0$, so as claimed $S$ is not  faithfully flat over $R$.  
\end{proof}

\begin{proof}[Proof of Proposition \ref{prop:x8x8}]
  Let $R$ be a complete noetherian local ring with maximal ideal $\m$. Property $(1)$ of Definition \ref{def:suffdisc} is clear, so we prove property $(2)$.
  
  Let $R'$ be an \'etale faithfully flat $R$-algebra.  We will find an $R'$-algebra $R''$ such that $R\to R''$ is finite \'etale.
 
  By Lemma \ref{8989}, $R'=T\times S$ for $T$ finite \'etale over $R$. We take $R''=T$.  
\end{proof}

\begin{defi}\label{def:essan}
  Let $R$ be a topological ring. We say $R$ is {\em essentially analytic} if $R$ is a local, sufficiently disconnected, continuously invertible topological ring such that every \'etale map $X\to Y$ of finite-type $R$-schemes induces a local homeomorphism $X(R)\to Y(R)$.
\end{defi}
\begin{rmk}
  Essentially analytic rings include $\Z_p$, $\Q_p$, $\R$, and $\mathbb{C}$.

  For these rings, the fact that they are essentially analytic comes from the inverse function theorem.
\end{rmk}

Here is the main definition of the paper.
\begin{defi}\label{def:top}
  Let $R$ be an essentially disconnected, continuously invertible topological ring. Let $\Xf$ be an algebraic $R$-stack. We topologize $\Xf(R)$ in the following way:

  For each smooth cover $Z\to \Xf$ from an $R$-scheme $Z$, let $\Xf(R)_Z$ be the image of $Z(R)$ in $\Xf(R)$ given the quotient topology.

  Let $\C_\Xf$ be the category of $R$-schemes $Z$ together with a smooth cover $Z\to \Xf$, and maps from $Z_1\to \Xf$ to $Z_2\to \Xf$ are smooth maps $Z_1\to Z_2$ making the following diagram commute
  \[
    \begin{tikzcd}
      Z_1 \arrow[rr]\arrow[dr] & & Z_2\arrow{dl}\\
      & \Xf&
    \end{tikzcd}
  \]

  Then let $\Xf(R)$ be given the topology of $\colim_{Z\in \C}\Xf(R)_Z$.
\end{defi}

For this definition to make sense, we will need to see that every $x\in \X(R)$ is in $\X(R)_Z$ for some $Z$. Theorem \ref{thm:lift} establishes this.

\begin{rmk}\label{rem:filtered}
  Note that in Definition \ref{def:top}, we get the same colimit if we restrict to morphisms of the form $Z_1\hookrightarrow Z_1\coprod Z_2$, for smooth cover $Z_2\to \Xf$. In this way, if convenient, we may assume that the colimit is over a filtered category.
  
\end{rmk}

\section{Cocycle spaces}\label{sec:cocycle}
In this section $R$ is a ring and all objects are finite-type over $R$. Additionally through out this section, $T$ will be an $R$-scheme, $T'\to T$ will be an fppf cover, $\Xf$ will be an algebraic stack over $T$, and $Z$ an algebraic space over $T$, and $Z\to \Xf$ will be a smooth cover. We briefly recall Section \ref{sec:stackssetup}.

Let us be given maps $T'\to Z$ and $T'\times_TT'\to Z\times_\Xf Z$ making the following diagram commute:

  \begin{equation}\label{eqn:first}
    \begin{tikzcd}
      T'\times_TT'\ar[xshift=3pt]{d} \ar[xshift=-3pt]{d}\arrow{r}&  Z\times_{\Xf}Z \ar[xshift=3pt]{d} \ar[xshift=-3pt]{d}\\
      T'\arrow{r}& Z.
    \end{tikzcd}
  \end{equation}

 From this data we get a map $T'\times_T T'\times_T T'\to Z\times_{\Xf}Z\times_{\Xf} Z$

If the diagram 
\begin{equation}\label{eqn:2}
    \begin{tikzcd}
    T'\times_T T'\times_TT'\ar[d,"\pi_{23}"]\ar[r]& Z\times_{\Xf}Z\times_{\Xf}Z\ar[d,"\pi_{23}"]\\
    T'\times_TT'\arrow{r}&  Z\times_{\Xf}Z
  \end{tikzcd}
\end{equation}
commutes, we get an associated map $T\to \Xf$.

Now a map $T'\to Z$ is the same data as a $T$-point of $\Res_{T'/T}Z$. A map $T'\times_T T'\to Z\times_{\Xf} Z$ is the same as the data of a $T$-point of $\Res_{T'\times_T T'/T}(Z\times_\Xf Z)$.  There are two maps $\pi_1,\pi_2:\Res_{T'/T}Z\to \Res_{T'\times_TT'/T}(Z\times_{\Xf}Z)$ coming from the two projections $T'\times_T T'\to T'$. There are also two maps $\pi_1,\pi_2:\Res_{T'\times_T T'}Z\times_\Xf Z\to \Res_{T'\times T'/T}Z$ coming respectively from the two projections $Z\times_\Xf Z\to Z$.

Finally, the data of $T'\to Z$ and $T'\times T'\to Z\times_\Xf Z$ making Diagram \ref{eqn:first} commute can thus be rephrased as the data of a $T$-point of the limit, $P_{T',Z\to \Xf}$, of the following diagram:
  
\begin{equation}\label{eqn:3}
  \begin{tikzcd}
    & \Res_{T'\times_T T'}Z\times_\Xf Z\arrow{d}{\pi_1}\arrow[bend left=30]{ddr}{\pi_2}& \\
    \Res_{T'/T} Z\arrow{r}{\pi_1}\ar[bend right=20]{drr}{\pi_2}& \Res_{T'\times_T T'/T}Z & \\
    & & \Res_{T'\times_T T'/T}Z      
  \end{tikzcd}
\end{equation}

We get maps $d_u,d_\ell:P_{T',Z\to \Xf}\to \Res_{T'\times_T T'\times_T T'/T}(Z\times_\Xf Z)$ corresponding to the upper and lower paths of Diagram \ref{eqn:2}. Let $\C_{T',Z\to \Xf}$ be the equalizer of
\[
  \begin{tikzcd}
    P_{T',Z\to \Xf}\ar[yshift=3pt]{r}{d_u}\ar[yshift=-3pt,swap]{r}{d_\ell}&\Res_{T'\times_T T'\times_T T'}Z\times_\Xf Z.
  \end{tikzcd}
\]

As it stands this $\C_{T',Z\to \Xf}$ is only a sheaf. If it is representable by a scheme or algebraic space, we will refer to $\C_{T',Z\to \Xf}$ as a {\em cocycle space}.

Descent gives us a map $\C_{T',Z\to \Xf}\to \Xf$ (see Section \ref{sec:stackssetup}).

\begin{prop}\label{prop:pullback}
  Let $\Yf\to \Xf$ be a map of algebraic stacks over $T$. If $Z\times_\Xf \Yf$ is an algebraic space, then there is a canonical isomorphism $\C_{T',Z\to \Xf}\times_{\Xf}\Yf\cong \C_{T',(Z\times_\Xf \Yf)\to \Yf}$.
\end{prop}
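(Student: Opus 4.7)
The plan is to construct a canonical morphism from $\C_{T',(Z\times_\Xf \Yf)\to \Yf}$ to $\C_{T',Z\to \Xf}\times_\Xf \Yf$ by universal properties and then verify it is an isomorphism. Both sides are fppf sheaves (built as fiber products of stacks and as subsheaves of limits of Weil restrictions), so it suffices to check bijectivity on $U$-valued points for arbitrary $R$-schemes $U$. There are really two routes: a functor-of-points unwinding, and a more formal limit argument using that $\Res_{T'/T}$, being a right adjoint to pullback along $T'\to T$, preserves all limits.

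Taking the functor-of-points approach first: the projection $p\colon Z\times_\Xf \Yf\to Z$ induces, by functoriality of Weil restriction, compatible maps between all the Weil restrictions appearing in Diagram \ref{eqn:3} for the two covers, using the identification $(Z\times_\Xf \Yf)\times_\Yf(Z\times_\Xf \Yf)\cong Z\times_\Xf Z\times_\Xf \Yf$. This yields a forgetful map $\C_{T',(Z\times_\Xf \Yf)\to \Yf}\to \C_{T',Z\to \Xf}$. Combined with the descent map $\C_{T',(Z\times_\Xf \Yf)\to \Yf}\to \Yf$, one gets the candidate map to $\C_{T',Z\to \Xf}\times_\Xf \Yf$. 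To check bijectivity, unwind: a $U$-point of $\C_{T',(Z\times_\Xf \Yf)\to \Yf}$ is a pair of maps $U\times_T T'\to Z\times_\Xf \Yf$ and $U\times_T(T'\times_T T')\to Z\times_\Xf Z\times_\Xf \Yf$ satisfying the cocycle condition over $U\times_T(T'\times_T T'\times_T T')$. By the universal property of the fiber products over $\Xf$ and $\Yf$, this data splits into a compatible pair of maps to $Z$ and $Z\times_\Xf Z$ (a $U$-point of $\C_{T',Z\to \Xf}$) together with a pair of maps to $\Yf$, which constitute descent data for a $U$-point of $\Yf$; the cocycle condition distributes to give both the cocycle condition for the first and the descent compatibility for the second, and the 2-isomorphism data makes the resulting $U\to \Xf$ match the composite $U\to \Yf\to \Xf$. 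This is exactly a $U$-point of $\C_{T',Z\to \Xf}\times_\Xf \Yf$, and the assignments are mutually inverse.

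Alternatively, one can argue more structurally: since $\Res_{T'/T}$ preserves fiber products, one has isomorphisms $\Res_{T'/T}(Z\times_\Xf \Yf)\cong \Res_{T'/T}Z\times_{\Res_{T'/T}\Xf}\Res_{T'/T}\Yf$ and similarly for $\Res_{T'\times_T T'/T}$ of the diagonal fiber products. Feeding these into the diagrams defining $P_{T',-}$ and the equalizer defining $\C_{T',-}$ shows that both are formed by limits that commute with base change along $\Yf\to \Xf$, giving the desired canonical isomorphism.

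The main obstacle is bookkeeping the 2-isomorphisms that come with each fiber product over the stack $\Xf$, and making sure that the cocycle condition on the $(Z\times_\Xf \Yf)$-cocycle space faithfully decomposes into the cocycle condition for the $Z$-cover plus descent data for a $\Yf$-point, with the compatibility over $\Xf$ being neither more nor less than what the right-hand side $\C_{T',Z\to \Xf}\times_\Xf \Yf$ demands. Once the identifications of the Weil-restricted fiber products are set up carefully, the bijection becomes essentially tautological.
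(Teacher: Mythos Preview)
Your proposal is correct and your primary route---the functor-of-points unwinding---is exactly what the paper does: it constructs explicit maps in both directions on $T$-points (equivalently, your $U$-points via base change) by decomposing a point of $\C_{T',(Z\times_\Xf \Yf)\to \Yf}$ into its $Z$-component (giving a point of $\C_{T',Z\to \Xf}$), its $\Yf$-component (which descends to a map $T\to \Yf$), and the $2$-isomorphism over $\Xf$ that glues them, then checks these assignments are mutually inverse. Your alternative structural argument via preservation of limits by $\Res_{T'/T}$ is a cleaner packaging the paper does not pursue, but it amounts to the same verification once one tracks the $2$-isomorphisms carefully.
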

The reason for assuming that $Z\times_{\Xf}\Yf$ is an algebraic space is that this is the only context in which we have a definition of $\C_{T',(Z\times_\Xf \Yf)\to \Yf}$. 

\begin{proof}
  We will describe a map in both directions. As the construction of the cocycle space is functorial in $T$ it will suffice to give what this map does on $T$-points.
  
  First let us begin with constructing the map $\C_{T',Z\to \Xf}\times_{\Xf}\Yf\to \C_{T',(Z\times_\Xf \Yf)\to \Yf}$. We will describe how a $T$-point of $\C_{T',Z\to \Xf}\times_{\Xf}\Yf$ gives a $T$-point of $\C_{T',(Z\times_\Xf \Yf)\to \Yf}$.

  The data of a $T$-point of $\C_{T',Z\to \Xf}\times_{\Xf}\Yf$ is the same as the following data:
  \begin{enumerate}[(1)]
  \item Maps $T'\to Z$ and $T'\times_T T'\to Z\times_{\Xf} Z$ making Diagrams \ref{eqn:first} and \ref{eqn:2} commute.

    This then defines a map $f_1:T\to \Xf$ making the following diagram commute:
    \[
      \begin{tikzcd}
        T'\ar[r]\ar[d]& Z\ar[d]\\
        T\ar[r,"f_1"]& \Xf.
      \end{tikzcd}
    \]

  \item A map $T\to \Yf$.

    This defines a composite $T\to \Yf\to \Xf$, which we denote by $f_2:T\to \Xf$.

  \item An isomorphism $f_1\cong f_2$.    
  \end{enumerate}

  The isomorphism $f_1\cong f_2$ by functoriality defines an isomorphism between the composite $T'\to Z\to \Xf$ and the composite $T'\to T\to \Yf\to \Xf$. The map $T'\to Z$, the composite $T'\to T\to \Yf$, and this isomorphism together define a map $T'\to Z\times_{\Xf} \Yf$.

  Similarly, the isomorphism $f_1\cong f_2$ by functoriality defines an isomorphism between the composite $T'\times_TT'\to Z\times_{\Xf}Z\to \Xf$ and the composite $T'\times_T T'\to T\to \Yf\to \Xf$. The map $T'\times_TT'\to Z\times_{\Xf}Z$, the composite $T'\times_T T'\to T\to \Yf$, and this isomorphism define a map $T'\times_T T'\to (Z\times_\Xf Z)\times_{\Xf}\Yf$. But $(Z\times_\Xf Z)\times_{\Xf}\Yf$ is canonically isomorphic to $(Z\times_\Xf \Yf)\times_{\Yf} (Z\times_\Xf \Yf)$. Thus we get a map $T'\times_T T'\to (Z\times_\Xf \Yf)\times_{\Yf} (Z\times_\Xf \Yf)$.

  The commutativity of Diagram \ref{eqn:2} implies that the analogous diagram produced by the maps $T'\to Z\times_{\Xf} \Yf$ and $T'\times_T T'\to (Z\times_\Xf \Yf)\times_{\Yf} (Z\times_\Xf \Yf)$ also commutes. This is   precisely the data needed to define a map $T\to \C_{T',(Z\times_\Xf \Yf)\to \Yf}$. So we have produced the map $\C_{T',Z\to \Xf}\times_{\Xf}\Yf\to \C_{T',(Z\times_\Xf \Yf)\to \Yf}$.

  We now describe the map $\C_{T',(Z\times_\Xf \Yf)\to \Yf}\to \C_{T',Z\to \Xf}\times_{\Xf}\Yf$. Again, we explain how a $T$-point of $\C_{T',(Z\times_\Xf \Yf)\to \Yf}$ gives a $T$-point of $\C_{T',Z\to \Xf}\times_{\Xf}\Yf$.

  A $T$-point of $\C_{T',(Z\times_\Xf \Yf)\to \Yf}$ is the following data:
  \begin{enumerate}[(1)]
  \item Map $T'\to Z\times_\Xf \Yf$
  \item Map $T'\times T'\to (Z\times_{\Xf}\Yf)\times_{\Yf}(Z\times_{\Xf}\Yf)\cong (Z\times_\Xf Z)\times_{\Xf}\Yf$,
  \end{enumerate} making the analogue of Diagram \ref{eqn:first} and Diagram \ref{eqn:2} commute.

  By composing with the appropriate projections, we get: \begin{enumerate}[(1)]
  \item Map $T'\to Z$
  \item Map $T'\times_T T'\to Z\times_{\Xf} Z$
  \end{enumerate} making Diagram \ref{eqn:first} and Diagram \ref{eqn:2} commute. This defines a map $T\to \C_{T',Z\to \Xf}$. Let $f_1$ be the composite $T\to \C_{T',Z\to \Xf}\to \Xf$. We can then identify the composite $T'\to Z\times_\Xf \Yf\to Z\to \Xf$ with $T'\to T\xrightarrow{f_1}\Xf$ and the composite $T'\times_T T'\to (Z\times_{\Xf}\Yf)\times_{\Yf}(Z\times_{\Xf}\Yf)\to Z\times_{\Xf}Z\to \Xf$ with $T'\times_T T'\to T\xrightarrow{f_1} \Xf$.

  Now form the compoisition $T'\to Z\times_{\Xf} \Yf\to \Yf$. The fact that the $Z\times_\Xf \Yf\to \Yf$ version of Diagram \ref{eqn:first} commutes defines an isomorphism between the two composites $T'\times_T T'\xrightarrow{\pi_i}T'\to \Yf$. The fact that the $Z\times_\Xf \Yf\to \Yf$ version of Diagram \ref{eqn:2} commutes defines a ``cocycle'' compatibility between the induced isomorphisms between three composites $T'\times_T T'\times_T T'\xrightarrow{\pi_i}T'\to \Yf$. Descent from this defines a map $f_2:T\to \Yf$.

  Furthermore, descent theory gives us canonical isomorphisms between the composites $T'\to Z\times_\Xf \Yf\to \Yf$ and $T'\to T\xrightarrow{f_2} \Yf$, and also between the composites $T'\times_T T'\to (Z\times_\Xf \Yf)\times_{\Yf}(Z\times_\Xf \Y)\to \Yf$ and $T'\times_T T'\to T\xrightarrow{f_2} \Yf$. Thus we can and do assume that these isomorphic composites are equal. (Also note that since $Z\times_\Xf \Yf$ is an algebraic space, there is at most one isomorphism between any two maps to $Z\times_\Xf \Yf$.)

  Next, the map $T'\to Z\times_{\Xf}\Yf$ gives an isomorphism between the composites $T'\to T\xrightarrow{f_1} \Xf$ and $T'\to T\xrightarrow{f_2} \Xf$. Likewise, the map $T'\times_T T'\to (Z\times_{\Xf}\Yf)\times_{\Yf}(Z\times_{\Xf}\Yf)\cong (Z\times_{\Xf} Z)\times_{\Xf} \Yf$ gives an isomorphism between the composites $T'\times_T T'\to T\xrightarrow{f_1} \Xf$ and $T'\times_T T'\to T\xrightarrow{f_2} \Xf$.

  The commutativity of the $Z\times_{\Xf}\Yf\to \Yf$ version of Diagram \ref{eqn:first} implies that isomorphism between the composites $T'\times_T T'\to T\xrightarrow{f_1} \Xf$ and $T'\times_T T'\to T\xrightarrow{f_2} \Xf$ is induced from an isomorphism of the composites $T'\to T\xrightarrow{f_1} \Xf$ and $T'\to T\xrightarrow{f_2} \Xf$ by functoriality from either projection. Then descent (or the stack property) implies that these identifications must be induced by functoriality from an isomorphism $f_1\cong f_2$.

  The data of $T\to \C_{T',Z\to \Xf}$ and $T\to \Yf$ and isomorphism $f_1\cong f_2$, together define a map $T\to \C_{T',Z\to \Xf}\times_\Xf \Yf$. Thus we have described the map $\C_{T',Z\times_\Xf \Yf\to \Yf}\to \C_{T',Z\to \Xf}\times_{\Xf}\Yf$.

  These maps are inverses.

\end{proof}

\section{Representability of cocycle spaces}

In this section $R$ is a ring and all objects are finite-type over $R$. Additionally through out this section, $T$ will be an $R$-scheme, but now $T'\to T$ will be a finite \'etale  cover. Still $\Xf$ will be an algebraic stack over $T$, and $Z$ an algebraic space over $T$, and $Z\to \Xf$ will be a smooth cover.

\begin{lem}\label{lem:12}
  Assume $\Xf$ is an algebraic space, $Z$ is relatively affine over $T$. For any $t\in T$, if $k(t)$ denotes the residue field at $t$, any finite set of points in $(Z\times_\Xf Z)\times_{T}\Spec k(t)$ is contained in an affine subset of $(Z\times_\Xf Z)\times_{T}\Spec k(t)$.
\end{lem}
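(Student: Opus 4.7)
The plan is to exhibit $W:=(Z\times_\Xf Z)\times_T\Spec k(t)$ as a quasi-affine scheme over $k:=k(t)$; once that is done, any finite set of points in a quasi-affine scheme lies in a single principal (hence affine) open by prime avoidance.

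First I would write $Z_k:=Z\times_T\Spec k$ and $\Xf_k:=\Xf\times_T\Spec k$, so that $W\cong Z_k\times_{\Xf_k}Z_k$. Since $Z\to T$ is affine, $Z_k$ is affine over $k$, and therefore $Z_k\times_k Z_k$ is affine. Because $\Xf$ is an algebraic space, the diagonal $\Xf\to\Xf\times_T\Xf$ is representable by schemes and a monomorphism (as $\Xf$ is a sheaf of sets). Pulling back this diagonal along $Z_k\times_k Z_k\to\Xf_k\times_k\Xf_k$ exhibits $W$ as a scheme and gives a monomorphism $\iota\colon W\to Z_k\times_k Z_k$ into an affine scheme. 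Under the paper's standing finite-type conventions $W$ is a scheme of finite type over $k$, so $\iota$ is of finite type.

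Next I would verify that $\iota$ is quasi-finite, in order to apply Zariski's Main Theorem. A monomorphism is unramified, since its relative diagonal is an isomorphism and so $\Omega_{W/Z_k\times_k Z_k}=0$; combined with being of finite type this yields quasi-finiteness. As monomorphisms are separated, Zariski's Main Theorem provides a factorization $W\hookrightarrow \bar W\xrightarrow{g}Z_k\times_k Z_k$ with $W\hookrightarrow\bar W$ a quasi-compact open immersion and $g$ finite. Since $g$ is finite with affine target, $\bar W$ is affine, and hence $W$ is quasi-affine.

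Finally, write $\bar W=\Spec A$ and $W=\Spec A\setminus V(I)$ for some ideal $I\subseteq A$. Given the finite set of specified points $w_1,\dots,w_n\in W$, prime avoidance produces $f\in I$ that lies in none of the corresponding primes $\p_{w_i}$; then $D(f)\subseteq W$ is an affine open containing every $w_i$. The main obstacle is the quasi-finiteness of $\iota$ (unramified because a monomorphism, of finite type from the conventions), which unlocks Zariski's Main Theorem and in turn produces the quasi-affine compactification.
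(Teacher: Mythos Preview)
Your argument is correct and follows the same overall arc as the paper: reduce to showing that $W=(Z\times_\Xf Z)\times_T\Spec k(t)$ is quasi-affine over $k(t)$, then conclude that any finite set of points lies in an affine open. The difference lies in how quasi-affineness is obtained. The paper simply asserts that the diagonal $\Xf\to\Xf\times_T\Xf$ is a locally closed immersion, whence $Z_k\times_{\Xf_k}Z_k\hookrightarrow Z_k\times_k Z_k$ is a locally closed immersion into an affine scheme; strictly speaking this uses that $\Xf$ is locally separated, which is not stated among the paper's conventions. You instead use only that the diagonal is a monomorphism (true for any algebraic space), observe that the resulting $\iota$ is a finite-type monomorphism hence unramified and quasi-finite, and apply Zariski's Main Theorem to factor $\iota$ as an open immersion into something finite over the affine $Z_k\times_k Z_k$. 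This is a cleaner and more robust route to the same conclusion. For the last step the paper invokes the fact that quasi-affine varieties are quasi-projective and cites the standard result for quasi-projective varieties, whereas your prime-avoidance argument is more direct and self-contained.
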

\begin{proof}
  By base changing $Z\to T$ to $\Spec k(t)$, we may reduce to the case when $T$ is the spectrum of a field and $t\in T$ is the unique point and $Z$ is affine. Thus we have to show that $Z\times_X Z$ has the property that any finite collection of points is contained in an affine.

  The diagonal $\Xf\xrightarrow{\Delta} \Xf\times_{T}\Xf$ is a locally closed immersion, which implies that $Z\times_\Xf Z$ is locally closed in $Z\times_{T} Z$. On the other hand $Z$ is affine, so $Z\times_{T} Z$ is affine, and therefore that $Z\times_\Xf Z$ is quasi-affine over a field. Finite-type quasi-affine schemes over fields have the property that any finite collection of points is contained in an affine, as quasi-projective varieties over fields have this property and quasi-affine varieties are quasi-projective.
\end{proof}

\begin{prop}\label{prop:repscheme}
  If $\Xf$ is an algebraic space and $Z$ is a relatively affine $T$-scheme, Then $\C_{T',Z\to \Xf}$ is representable by a scheme.
\end{prop}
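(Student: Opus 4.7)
The plan is to realize $\C_{T',Z\to\Xf}$ as an iterated construction (Weil restrictions, a finite limit, and an equalizer) and to verify at each step that scheme-representability is preserved under the given hypotheses.

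\textbf{First}, I would establish scheme-representability of each of the Weil restrictions that appear in Diagram~\ref{eqn:3} and in the definition of $\C_{T',Z\to\Xf}$. The morphism $T'\to T$ is finite \'etale, hence finite locally free, and so are the maps $T'\times_T T'\to T$ and $T'\times_T T'\times_T T'\to T$. Since $Z$ is relatively affine over $T$, Weil restriction along a finite locally free morphism preserves affineness, so $\Res_{T'/T}Z$ and $\Res_{T'\times_T T'/T}Z$ are affine $T$-schemes. For $Z\times_\Xf Z$, which is only an algebraic space (because $\Xf$ is only an algebraic space), I would invoke the standard representability theorem: the Weil restriction of an algebraic space along a finite locally free morphism is always representable by an algebraic space, and it is representable by a scheme provided that in each geometric fiber over $T$ every finite collection of points is contained in an affine open. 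This criterion is supplied by Lemma~\ref{lem:12}, so both $\Res_{T'\times_T T'/T}(Z\times_\Xf Z)$ and $\Res_{T'\times_T T'\times_T T'/T}(Z\times_\Xf Z)$ are $T$-schemes.

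\textbf{Second}, $P_{T',Z\to\Xf}$ is the limit of the finite diagram \ref{eqn:3} whose vertices are now all known to be schemes and whose edges are morphisms of schemes (the two projections $\Res_{T'/T}Z\to\Res_{T'\times_T T'/T}Z$ induced by the projections $T'\times_T T'\rightrightarrows T'$, and the two projections $\Res_{T'\times_T T'/T}(Z\times_\Xf Z)\to\Res_{T'\times_T T'/T}Z$ induced by $Z\times_\Xf Z\rightrightarrows Z$). Iterated fiber products of schemes exist in the category of schemes, so $P_{T',Z\to\Xf}$ is a scheme.

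\textbf{Third}, $\C_{T',Z\to\Xf}$ is the equalizer of two morphisms $d_u,d_\ell:P_{T',Z\to\Xf}\to\Res_{T'\times_T T'\times_T T'/T}(Z\times_\Xf Z)$ between schemes. Such an equalizer is representable as the fiber product of $(d_u,d_\ell)$ with the diagonal of the target; since the diagonal of any scheme is a locally closed immersion, $\C_{T',Z\to\Xf}$ is realized as a locally closed subscheme of $P_{T',Z\to\Xf}$, hence a scheme. The main obstacle in this argument is the middle part of the first step, i.e.\ upgrading the Weil restriction of the algebraic space $Z\times_\Xf Z$ from being an algebraic space to being a scheme; Lemma~\ref{lem:12} is tailored precisely for this, and without it one would only obtain representability of $\C_{T',Z\to\Xf}$ in the larger category of algebraic spaces.
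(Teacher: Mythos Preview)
Your proposal is correct and takes essentially the same approach as the paper, which likewise argues that each Weil restriction appearing in the construction is a scheme (via Lemma~\ref{lem:12} and \cite{NeronModels}[Theorem~7.6.4]) and then observes that finite limits of schemes are schemes. One small correction: $Z\times_\Xf Z$ is in fact already a scheme, not merely an algebraic space---the diagonal of the algebraic space $\Xf$ is a locally closed immersion, so $Z\times_\Xf Z$ is a locally closed (indeed quasi-affine) subscheme of $Z\times_T Z$, as noted in the proof of Lemma~\ref{lem:12}---so the BLR criterion applies directly without any algebraic-space extension.
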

\begin{proof}
  Note that $T'\times_T T'$ and $T'\times_T T'\times_T T'$ are also both finite and locally free over $T$. The sheaf $\C_{T',Z\to \Xf}$ is constructed as a limit of various restriction of scalars of $Z$ and $Z\times_{\Xf} Z$. These restrictions of scalars are schemes by Lemma \ref{lem:12} and \cite{NeronModels}[Theorem 7.6.4]. Limits of schemes are schemes, so $\C_{T',Z\to \Xf}$ is a scheme.
\end{proof}

\begin{lem}\label{lem:12312}
  Recall we assumed $T'\to T$ is finite \'etale. Let $X'$ be an algebraic space over $T'$. Then $\Res_{T'/T}X'$ is an algebraic space over $T$.
\end{lem}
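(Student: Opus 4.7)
The strategy is to reduce to the case where the finite \'etale cover $T'\to T$ is trivial, and then use that Weil restriction along a trivial cover is just a finite product. Since $T'\to T$ is finite \'etale, it is \'etale-locally trivial: after refining $T$ to a suitable component of constant rank and then passing to an \'etale cover, we can find an \'etale cover $T''\to T$ such that $T'\times_T T''\cong \coprod_{i=1}^n T''$ as $T''$-schemes, where $n$ is the degree. (Different components of $T$ of different rank may be handled separately, so we may assume $T'\to T$ has constant rank $n$.)

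Next, I would use that the formation of $\Res_{T'/T}$ commutes with arbitrary base change on $T$: for any $T''\to T$, there is a canonical isomorphism of fppf sheaves on $T''$-schemes
\[
  (\Res_{T'/T} X')\times_T T''\;\cong\;\Res_{T'\times_T T''/T''}\bigl(X'\times_{T'}(T'\times_T T'')\bigr),
\]
since both sides represent the functor $W\mapsto X'(W\times_T T')$ on $T''$-schemes $W$. Applying this with the trivializing cover $T''$, the right-hand side becomes
\[
  \Res_{\coprod_{i=1}^n T''/T''}\Bigl(\coprod_{i=1}^n X'_i\Bigr)\;\cong\;\prod_{i=1}^n X'_i,
\]
where $X'_i$ denotes the pullback of $X'$ along the $i$-th section $T''\hookrightarrow T'\times_T T''\to T'$. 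Each $X'_i$ is an algebraic space over $T''$, and a finite fiber product of algebraic spaces over $T''$ is again an algebraic space over $T''$.

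Thus $(\Res_{T'/T} X')\times_T T''$ is an algebraic space. To conclude I would invoke the fact that being an algebraic space is \'etale-local on the base: if an fppf sheaf $F$ on $T$-schemes becomes an algebraic space after a surjective \'etale base change $T''\to T$, then $F$ itself is an algebraic space. This follows by descending an \'etale scheme presentation of $F|_{T''}$ along the \'etale cover $T''\to T$; alternatively, one may cite the corresponding Stacks Project result. Since $\Res_{T'/T} X'$ is manifestly an fppf sheaf (as $X'$ is and pushforward preserves the sheaf property), this yields the lemma.

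The main technical point is the \'etale-local trivialization of the finite \'etale cover together with the descent step; the commutation of $\Res$ with base change is formal from the defining adjunction, and the identification of Weil restriction along a trivial cover with a finite product is immediate.
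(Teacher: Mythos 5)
Your proof follows essentially the same route as the paper's: both reduce to the constant-rank case, pass to a finite étale cover $T''\to T$ that trivializes $T'\to T$, identify $(\Res_{T'/T}X')\times_T T''$ with a finite fiber product of algebraic spaces over $T''$, and descend along the surjective étale map $T''\to T$. Your write-up is slightly more careful in spelling out the compatibility of Weil restriction with base change and in noting that $\Res_{T'/T}X'$ is an fppf sheaf before invoking étale descent, but these are the same underlying steps.
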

\begin{proof}
  The question is local on $T$, so we can assume that $T'\to T$ is free of a fixed rank $n$. There exists a finite \'etale cover $T''\to T$ such that $T''\times_T T'\cong (T'')^n$ (the fiber product of $n$ copies of $T'$ over $T$) as a $T$-scheme. 

  Now $(\Res_{T'/T}X')\times_{T}T''\to \Res_{T'/T}X'$ is representable, surjective, and \'etale. Furthermore, $\Res_{T'/T}X'\times_{T}T''\cong (X'\times_{T'}T'')^n$ (the fiber product of $n$ copies of $X'$ over $T'$) as functor on $T$-schemes. Therefore, $\Res_{T'/T}X'$ is an \'etale quotient of an algebraic space, so is an algebraic space.
\end{proof}

\begin{prop}\label{prop:func}
  If $\Xf$ is an algebraic stack then $\C_{T',Z\to \Xf}$ is representable by an algebraic space.
\end{prop}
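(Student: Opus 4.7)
The plan is to reduce the claim to Lemma~\ref{lem:12312} by observing that every sheaf used to build $\C_{T',Z\to \Xf}$ is an algebraic space, and then invoking the fact that finite limits of algebraic spaces (computed in the category of fppf sheaves) are again algebraic spaces. This is the natural analogue of Proposition~\ref{prop:repscheme}, with Lemma~\ref{lem:12312} replacing the representability input from \cite{NeronModels}.

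First I would observe that because $Z$ is an algebraic space, the smooth cover $Z\to \Xf$ is automatically representable by algebraic spaces (any morphism to an algebraic stack from an algebraic space has this property). Hence $Z\times_\Xf Z$ is an algebraic space over $T$. Next, since $T'\to T$ is finite \'etale, so are $T'\times_T T'\to T$ and $T'\times_T T'\times_T T'\to T$. Applying Lemma~\ref{lem:12312} in turn to the pairs $(T'/T, Z)$, $(T'\times_T T'/T,\, Z\times_\Xf Z)$, and $(T'\times_T T'\times_T T'/T,\, Z\times_\Xf Z)$, I conclude that $\Res_{T'/T}Z$, $\Res_{T'\times_T T'/T}(Z\times_\Xf Z)$, and $\Res_{T'\times_T T'\times_T T'/T}(Z\times_\Xf Z)$ are all algebraic spaces over $T$.

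Then $P_{T',Z\to \Xf}$, being the limit of the finite diagram in \eqref{eqn:3} of algebraic spaces (with arrows that are morphisms of algebraic spaces), is itself an algebraic space, since such a finite limit can be rewritten as an iterated fiber product. Finally, $\C_{T',Z\to \Xf}$ is the equalizer of two maps $d_u, d_\ell$ between the algebraic spaces $P_{T',Z\to \Xf}$ and $\Res_{T'\times_T T'\times_T T'/T}(Z\times_\Xf Z)$, and this equalizer is realized as the fiber product
\[
  P_{T',Z\to \Xf}\times_{(d_u,d_\ell),\, Y\times_T Y,\, \Delta} Y,
\]
where $Y = \Res_{T'\times_T T'\times_T T'/T}(Z\times_\Xf Z)$. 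As a fiber product of algebraic spaces, this is an algebraic space.

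The only point where care is needed is the verification that $Z\times_\Xf Z$ is an algebraic space, for which the key input is that $Z\to \Xf$ is representable by algebraic spaces; once this is in hand the proof is purely formal, with all the real content packaged into Lemma~\ref{lem:12312}.
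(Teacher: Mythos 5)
Your proposal is correct and follows essentially the same route as the paper's (very terse) proof: apply Lemma~\ref{lem:12312} to see that each Weil restriction in Diagram~\eqref{eqn:3} is an algebraic space, then observe that the finite limit defining $\C_{T',Z\to \Xf}$ (an iterated fiber product followed by an equalizer) stays within algebraic spaces. You usefully spell out the detail the paper leaves implicit — namely that $Z\times_\Xf Z$ is an algebraic space because $Z\to\Xf$ is representable by algebraic spaces — but the underlying argument is the same.
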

\begin{proof}
  By Lemma \ref{lem:12312} all spaces used to construct $\C_{T',Z\to \Xf}$ are algebraic spaces. As the limit of algebraic spaces are algebraic spaces, $\C_{T',Z\to \Xf}$ is an algebraic space.
\end{proof}

\begin{prop}\label{prop:scheme}
  Suppose that $T'\to T$ is finite \'etale. If $\Xf$ is a scheme, then the following diagram is cartesian:
  \[
    \begin{tikzcd}
      \C_{T',Z\to \Xf}\ar{r}\ar{d}& \Res_{T'/T}Z\ar{d}\\
      \Xf\ar{r}& \Res_{T'/T} \Xf.
    \end{tikzcd}
  \]
\end{prop}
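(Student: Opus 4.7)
The plan is to compare the functors of points on both sides and to reduce the comparison to a description of each as an equalizer of the same pair of maps into $\Res_{T'\times_T T'/T}\Xf$. My starting point is the observation that because $\Xf$ is a scheme (in particular, a $0$-truncated fppf sheaf), the fiber product $Z\times_\Xf Z$ is cut out inside $Z\times_T Z$ by the single condition that the two projections to $\Xf$ agree. Consequently, once a map $T'\to Z$ is given, the data of a map $T'\times_T T'\to Z\times_\Xf Z$ making Diagram~\ref{eqn:first} commute is uniquely determined (it must be the pair of maps built from the two projections $T'\times_T T'\rightrightarrows T'$ composed with $T'\to Z$), and such a map exists precisely when the two compositions $T'\times_T T'\rightrightarrows T'\to Z\to \Xf$ agree. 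An analogous analysis at the triple-product level shows that the cocycle condition in Diagram~\ref{eqn:2} is then automatic, since both the upper and lower paths produce the same map $T'\times_T T'\times_T T'\to Z\times_\Xf Z$, namely the one given by projecting onto the $2$nd and $3$rd factors and then applying $T'\to Z$. Hence $\C_{T',Z\to \Xf}$ is identified with the equalizer of the two maps $\Res_{T'/T}Z\rightrightarrows \Res_{T'\times_T T'/T}\Xf$ obtained by composing with $Z\to \Xf$ and pulling back along the two projections $T'\times_T T'\to T'$.

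Next, I would observe that since $\Xf$ is an fppf sheaf and $T'\to T$ is an fppf cover, the descent property exhibits $\Xf$ itself as the equalizer of the two maps $\Res_{T'/T}\Xf\rightrightarrows \Res_{T'\times_T T'/T}\Xf$ induced by the same two projections. Combining this with the equalizer description of $\C_{T',Z\to \Xf}$ obtained in the previous paragraph, a section of $\Xf\times_{\Res_{T'/T}\Xf}\Res_{T'/T}Z$ is precisely a section $z\in \Res_{T'/T}Z$ whose image in $\Res_{T'/T}\Xf$ equalizes the two maps to $\Res_{T'\times_T T'/T}\Xf$, and this is exactly the defining condition for a section of $\C_{T',Z\to \Xf}$. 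This identification is natural in the test scheme and is compatible with the four structural maps of the diagram, yielding the claimed cartesian square.

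The main obstacle is the first step: carefully verifying that the cocycle condition collapses when $\Xf$ is a scheme. This requires unwinding the construction from Section~\ref{sec:stackssetup} of the map $T'\times_T T'\times_T T'\to Z\times_\Xf Z\times_\Xf Z$ produced from the map $T'\times_T T'\to Z\times_\Xf Z$ (via the canonical identification $T'\times_T T'\times_T T'\cong (T'\times_T T')\times_{T'}(T'\times_T T')$) and tracing through the definitions of the equalizer maps $d_u$ and $d_\ell$ to confirm that they agree tautologically. Once this bookkeeping is settled, the remainder of the argument is a formal manipulation with sheaves and fppf descent.
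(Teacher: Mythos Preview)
Your proposal is correct and follows essentially the same route as the paper's proof: both observe that, because $\Xf$ is a scheme, the map $T'\times_T T'\to Z\times_\Xf Z$ compatible with Diagram~\ref{eqn:first} is unique if it exists, that the cocycle condition in Diagram~\ref{eqn:2} is then automatic, and that existence of the compatible map is equivalent (via the sheaf property of $\Xf$ along the fppf cover $T'\to T$) to the datum of a $T$-point of $\Xf$ lifting the composite $T'\to Z\to\Xf$. The only difference is cosmetic: you package the argument through equalizer diagrams in the Weil restrictions, whereas the paper states the conclusion directly in terms of $T$-points and $T'$-points.
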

\begin{proof}
  In the case when $\Xf$ is a scheme and we have a map $T'\to Z$, there is at most one map $T'\times_T T'\to Z\times_\Xf Z$ making Diagram \ref{eqn:first} commute. Additionally, once we find such a map Diagram \ref{eqn:2} automatically commutes.

  Furthermore, we can find a map $T'\times_T T'\to Z\times_\Xf Z$ making Diagram \ref{eqn:first} commute precisely there is a morphism $T\to \Xf$ making
  \[\begin{tikzcd}
      T'\ar[r]\ar[d]&  Z\ar[d]& \\
      T\ar[r]& \Xf
  \end{tikzcd}\]
commute.

Thus a point of $\C_{T',Z\to \Xf}$ is a $T$-point of $\Xf$ and a $T'$-point of $Z$ having the same image in $\Xf(T')$. This precisely says the desired diagram is cartesian.
\end{proof}

\begin{prop}\label{prop:smooth}
  Suppose that $T'\to T$ is finite \'etale. Then the map $\C_{T',Z\to \Xf}\to \Xf$ is a smooth cover.
\end{prop}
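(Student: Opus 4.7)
The plan is to reduce to the case where the target is a scheme (which is the content of Proposition \ref{prop:scheme}) and then invoke standard properties of Weil restriction along finite \'etale covers.

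First I would choose a smooth cover $Y\to \Xf$ by a scheme $Y$. Since smoothness and surjectivity of a morphism into an algebraic stack can be verified after smooth base change on the target, it suffices to show that the pullback $\C_{T',Z\to \Xf}\times_{\Xf}Y\to Y$ is smooth and surjective. Because $\Xf$ is algebraic, $W\coloneqq Z\times_{\Xf}Y$ is an algebraic space, so Proposition \ref{prop:pullback} identifies the pullback with $\C_{T',W\to Y}\to Y$. Note that $W\to Y$ is a smooth cover, being a base change of $Z\to \Xf$, and by Proposition \ref{prop:func} the source $\C_{T',W\to Y}$ is an algebraic space, so the notion of smoothness applies directly.

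With $Y$ a scheme, Proposition \ref{prop:scheme} exhibits $\C_{T',W\to Y}\to Y$ as the base change of $\Res_{T'/T}W\to \Res_{T'/T}Y$ along the natural map $Y\to \Res_{T'/T}Y$. It therefore remains to show that $\Res_{T'/T}W\to \Res_{T'/T}Y$ is a smooth cover. Smoothness of Weil restriction along a finite locally free (in particular finite \'etale) morphism is standard; for instance it follows from the material in \cite{NeronModels}, Section 7.6. For surjectivity I would check on geometric points: a geometric point $\Spec \bar k\to \Res_{T'/T}Y$ is the same as a morphism $\Spec \bar k\times_T T'\to Y$, and since $T'\to T$ is finite \'etale and $\bar k$ is separably closed, $\Spec \bar k\times_T T'$ is a finite disjoint union of copies of $\Spec \bar k$. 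The geometric point thus amounts to a tuple of $\bar k$-points of $Y$; each lifts to a $\bar k$-point of $W$ by surjectivity of $W\to Y$, and the tuple of lifts assembles into a $\bar k$-point of $\Res_{T'/T}W$ mapping to the original one.

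I expect the least routine ingredient to be smoothness of Weil restriction, which is precisely where the finite locally free hypothesis on $T'\to T$ (provided here by finite \'etaleness) is genuinely used; every other step is formal bookkeeping assembled from Propositions \ref{prop:pullback}, \ref{prop:scheme}, and \ref{prop:func}.
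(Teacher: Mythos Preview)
Your argument is correct and follows essentially the same route as the paper: reduce to a scheme target via Proposition~\ref{prop:pullback} and a smooth cover by a scheme, then invoke Proposition~\ref{prop:scheme} to identify the map with a base change of a Weil restriction of a smooth cover along the finite \'etale $T'\to T$. You supply a bit more detail than the paper on why $\Res_{T'/T}W\to \Res_{T'/T}Y$ is surjective (the geometric-points check), whereas the paper simply asserts that Weil restriction preserves smooth covers; otherwise the two proofs coincide.
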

\begin{proof}
  First suppose that $\Xf$ is a scheme. Proposition \ref{prop:scheme} expresses $\C_{T',Z\to \Xf}\to \Xf$ as a pullback of $\Res_{T'/T}Z\to \Res_{T'/T}X$. As $Z\to \Xf$ is a smooth cover, so is $\Res_{T'/T}Z\to \Res_{T'/T}\Xf$ is a smooth cover, and therefore $\C_{T',Z\to \Xf}\to \Xf$ is a smooth cover.

  In general, we pullback $\C_{T',Z\to \Xf}\to \Xf$ along a smooth cover $X\to \Xf$ from scheme $X$. Proposition \ref{prop:scheme} says that $\C_{T',Z\to \Xf}\times_\Xf X\to X$ is isomorphic to $\C_{T',Z\times_\Xf X\to X}\to X$ which is a smooth cover by the scheme case. Thus we conclude that $\C_{T',Z\to \Xf}\to \Xf$ is a smooth cover.
\end{proof}

The following is proved via a different method by M. Bhargava and B. Poonen in the case when $R$ is a noetherian local ring \cite{BP}
\begin{thm}\label{thm:lift}
  Assume $R$ be a sufficiently disconnected topological ring, and let $T=\Spec R$. Let $\Xf$ be an algebraic stack over $T$, and let $T\to \Xf$ be a section. Then there is a $T$-scheme $Y$ with smooth cover $Y\to \Xf$ and map $T\to Y$ making the following diagram commute:
  \[
    \begin{tikzcd}
      T\ar{rr}\ar{dr}& & Y\ar{dl}\\
      & \Xf. &
    \end{tikzcd}
  \]
\end{thm}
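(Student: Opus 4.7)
The plan is to lift $s:T\to\Xf$ in two stages: first produce an algebraic-space smooth cover of $\Xf$ carrying a $T$-section by combining the sufficiently disconnected hypothesis with the cocycle construction, and then refine to a scheme cover by applying the same mechanism to an affine étale atlas of that algebraic space, this time landing in the setting where Proposition \ref{prop:repscheme} applies.

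First I would fix a smooth scheme cover $Z\to\Xf$ and form the pullback $Z_T := Z\times_{\Xf,s}T$, a smooth surjective algebraic space over $T$. Smooth surjections admit sections étale-locally on the base, so there is a faithfully flat étale $R\to R_0$ together with a section $\Spec R_0\to Z_T$. Property (2) of Definition \ref{def:suffdisc} then produces an $R_0$-algebra $R'$ with $R\to R'$ finite étale; setting $T':=\Spec R'$ and composing, we obtain a map $\widetilde{s}:T'\to Z$ whose composite to $\Xf$ equals $T'\to T\xrightarrow{s}\Xf$.

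Next I form the first cocycle space $Y_1 := \C_{T',Z\to\Xf}$. By Propositions \ref{prop:func} and \ref{prop:smooth}, $Y_1$ is an algebraic space and $Y_1\to\Xf$ is a smooth cover. The map $\widetilde{s}$, together with the tautological descent datum $T'\times_T T'\to Z\times_\Xf Z$ obtained by composing the two projections $\pi_1,\pi_2$ with $\widetilde{s}$ and taking the identity isomorphism between the two composites to $\Xf$ (they both equal $T'\times_T T'\to T\xrightarrow{s}\Xf$), manifestly satisfies the analogues of Diagrams \ref{eqn:first} and \ref{eqn:2} and therefore defines a $T$-point of $Y_1$ lifting $s$. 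At this stage $Y_1$ is only an algebraic space, so I refine by choosing an affine scheme $W$ with an étale cover $W\to Y_1$, which makes $W\to\Xf$ a smooth scheme cover. Reapplying the first-stage argument to the étale morphism $W\to Y_1$ and the section $T\to Y_1$, Definition \ref{def:suffdisc}(2) supplies a finite étale $T''\to T$ with a lift $T''\to W$ of the composite $T''\to T\to Y_1$. Finally I set $Y := \C_{T'',W\to Y_1}$: since $Y_1$ is an algebraic space and $W$ is affine over the affine base $T$, Proposition \ref{prop:repscheme} guarantees that $Y$ is a scheme, while Proposition \ref{prop:smooth} shows $Y\to Y_1$ is a smooth cover, so $Y\to Y_1\to\Xf$ is a smooth scheme cover of $\Xf$; the canonical descent datum upgrades $T''\to W$ to the required $T$-point of $Y$ lifting $s$.

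The main obstacle is the insistence that $Y$ be a scheme and not merely an algebraic space: a single pass through the cocycle construction over the stack $\Xf$ can only produce an algebraic space via Proposition \ref{prop:func}, which is why the two-stage approach is essential. The second pass is carried out with $Y_1$ (an algebraic space) as the base and $W$ (an affine, hence relatively affine, $T$-scheme) as the cover, precisely the hypotheses needed to invoke the scheme-representability statement of Proposition \ref{prop:repscheme}.
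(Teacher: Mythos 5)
Your proof is correct and follows the paper's argument in essentially the same way: both pass through the cocycle construction twice, first producing an algebraic-space smooth cover $\C_{T',Z\to\Xf}$ carrying a $T$-point lifting the given section, then repeating with an affine scheme \'etale cover of that algebraic space so that Proposition \ref{prop:repscheme} yields a genuine scheme. The only cosmetic difference is that you invoke Definition \ref{def:suffdisc}(2) directly and spell out the second pass, whereas the paper routes through Lemma \ref{rem:suffdisc} and summarizes the second pass as ``run the same procedure again.''
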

\begin{proof}
  Let $X\to \Xf$ be any separated smooth cover from by scheme . Let $Z=X\times_{\Xf} T$.
  
  Now $Z\to T$ is smooth, so there is an \'etale surjective morphism $T_1\to T$ such that $Z\times_T T_1\to T_1$ has a section. As $R$ is sufficiently disconnected by Lemma \ref{rem:suffdisc} this implies there exists $T'\to T_1$ such that the composite $T'\to T_1\to T$ is surjective finite \'{e}tale map. The induced map $T'\to T_1\to Z$ makes $T'\to Z$ making the following diagram commute
  \[
    \begin{tikzcd}
      T'\ar{rr}\ar{dr}& & Z\ar{dl}\\
      & T &
    \end{tikzcd}
  \]

  Now from the maps $T\to \Xf$ and $T'\to Z$, we can construct $T'\times_T T'\to Z\times_\Xf Z$ (which component-wise is the map $T'\to Z$), and this clearly makes Diagram \ref{eqn:first} and Diagram \ref{eqn:2} commute. This gives us a map $T\to \C_{T',Z\to X}$. Now if $\C_{T',Z\to \Xf}$ is only an algebraic space, we run the same procedure again lifting $T$ to a smooth cover of $\C_{T',Z\to \X}$ by a scheme keeping in mind Proposition \ref{prop:repscheme} . This proves the proposition.  
\end{proof}

\begin{prop}\label{prop:suffconnDM}
  Let $T=\Spec R$, so $\Xf$ is a finite-type $R$-stack. There are smooth covers $\pi_N:Z_N\to \Xf$ by schemes for $N\geq 1$, such that $\bigcup_N\pi_N(Z_N(R))=\Xf(R)$ and additionally, for any sufficiently disconnected $R$-algebra $R'$, $\bigcup_N\pi_N(Z_N(R'))=\Xf(R')$

  Additionally if $\Xf$ is a Deligne-Mumford stack, there is a single smooth cover $\pi:Z\to \Xf$ be a scheme such that $\pi(Z(R))=\Xf(R)$ and futhermore for any sufficiently disconnected $R$-algebra $R'$, $\pi(Z(R'))=\Xf(R')$
\end{prop}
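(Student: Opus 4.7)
The plan is to run the proof of Theorem \ref{thm:lift} uniformly in $x$, parameterizing the auxiliary finite \'etale cover by its rank. Fix once and for all a smooth cover $\pi : X \to \Xf$ by a scheme; for the DM case take $\pi$ to be \'etale, and after stratifying $\Xf$ into clopen pieces of constant fiber degree assume $\pi$ has constant degree $n$.

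For each $N \geq 1$, I construct $Z_N$ as a ``varying-$T'$'' cocycle space whose $T$-points consist of a rank $N!$ finite \'etale cover $T' \to T$, a morphism $u : T' \to X$, and a morphism $\phi : T' \times_T T' \to X \times_\Xf X$ satisfying the commutativity of Diagrams \ref{eqn:first} and \ref{eqn:2}; then $\pi_N : Z_N \to \Xf$ sends such data to the descended $f : T \to \Xf$. To exhibit $Z_N$ as a scheme with $\pi_N$ smooth, I use the classification of rank $N!$ finite \'etale covers by $S_{N!}$-torsors: the data $(T' \to T, u, \phi)$ corresponds to an $S_{N!}$-torsor $P \to T$ together with an $S_{N!}$-equivariant morphism from $P$ into an auxiliary scheme $\tilde X_N$ carved out of $X^{N!} \times (X \times_\Xf X)^{(N!)^2}$ by the compatibility equations. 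The descent datum $\phi$ rigidifies all automorphisms, so $Z_N \cong [\tilde X_N / S_{N!}]$ has trivial inertia; it is an algebraic space by the reasoning of Proposition \ref{prop:func}, and an argument analogous to Proposition \ref{prop:repscheme} combined with \cite{NeronModels}[Theorem 7.6.4] refines this to a scheme. Smoothness of $\pi_N$ follows from Proposition \ref{prop:smooth} applied to each fixed $T'$, patched $S_{N!}$-equivariantly.

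Surjectivity on $R'$-points is the main content. For $R'$ sufficiently disconnected and $x \in \Xf(R')$, running the proof of Theorem \ref{thm:lift} with our fixed $X \to \Xf$ yields an \'etale surjective $T_1 \to \Spec R'$ along which $X \times_\Xf \Spec R' \to \Spec R'$ has a section, and applying Lemma \ref{rem:suffdisc} produces a finite \'etale $R'$-algebra $S$ of rank $N(x)!$ (where $N(x)$ bounds the degree of the relevant \'etale cover) together with a compatible morphism $\Spec S \to X$. The canonically induced $\phi$ then furnishes an $R'$-point of $Z_{N(x)}$ above $x$. For the DM case, the uniform bound $n$ on $\pi$'s \'etale degree gives $N(x) \leq n$ for every $x$ directly from Lemma \ref{rem:suffdisc}, so $Z := Z_n$ by itself suffices.

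The principal obstacle is establishing that $Z_N$ is a \emph{scheme} (rather than only an algebraic space or stack) with smooth projection to $\Xf$: Section \ref{sec:cocycle}'s cocycle-space results directly handle the case of a single fixed $T'$, but extending to varying $T'$ requires verifying that the $S_{N!}$-quotient producing $Z_N$ is scheme-theoretic and that the Diagram \ref{eqn:first}--\ref{eqn:2} conditions cut out a closed subscheme of the ambient product. A secondary subtlety, and the reason a single cover cannot suffice outside the DM case, is that $N(x)$ depends on $x$ and is not a priori bounded, which is precisely what forces the countable family $(Z_N)_{N \geq 1}$.
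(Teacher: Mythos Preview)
Your overall strategy---parameterize the proof of Theorem \ref{thm:lift} over all finite \'etale covers of a given rank, using the cocycle machinery of Section \ref{sec:cocycle}---is exactly the paper's. The gap is in your rigidification step.

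The claim that ``the descent datum $\phi$ rigidifies all automorphisms, so $Z_N \cong [\tilde X_N / S_{N!}]$ has trivial inertia'' is false. Take $\Xf = X = \Spec R$ with the identity cover: then $u$ and $\phi$ are both the unique map to a point, so your moduli problem reduces to the bare stack of rank-$N!$ finite \'etale covers, whose inertia is all of $S_{N!}$. More generally, whenever $u : T' \to X$ factors through a proper intermediate cover $T' \to T'' \to T$, the automorphisms of $T'$ over $T''$ fix $u$, and since $\phi$ is forced by $u$ together with the descended map $T \to \Xf$, they fix $\phi$ as well. So $[\tilde X_N/S_{N!}]$ is a genuine stack, and your $Z_N$ as described is not an algebraic space, let alone a scheme.

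The paper sidesteps this by \emph{rigidifying rather than quotienting}: it introduces the smooth scheme $E_N$ parameterizing rank-$N$ free \'etale $R$-algebras \emph{equipped with a chosen $R$-basis}, with universal cover $F_N \to E_N$, and then takes $Z_N' = \C_{F_N,\; E_N \times_R X \to E_N \times_R \Xf}$. The basis kills all automorphisms of the cover, so Propositions \ref{prop:func} and \ref{prop:smooth} apply directly and $Z_N' \to \Xf$ is a smooth cover by an algebraic space. Even then $Z_N'$ is only an algebraic space a priori; the paper upgrades to a scheme by first proving the second paragraph (the DM/algebraic-space case, where Proposition \ref{prop:repscheme} does apply) and then bootstrapping. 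Your appeal to Proposition \ref{prop:repscheme} for general $\Xf$ is too optimistic for the same reason.

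A minor point: your stratification of $\Xf$ into clopens of constant \'etale degree is unnecessary and need not exist, since a separated \'etale cover that is not finite has no reason to have locally constant degree on the target. The paper simply bounds the degree by some $n$ and lets Lemma \ref{rem:suffdisc} produce a cover of rank exactly $n!$ uniformly.
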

\begin{proof}

  Let $X\to \Xf$ be any smooth cover by a scheme. Let $E_N$ be the $R$-scheme that parametrizes \'etale $R$-algebras which are free as $R$-modules of rank $N$ and are equipped with an $R$-basis. This is a smooth scheme. Let $F_N\to E_N$ be the universal \'etale cover of rank $N$. Set $Z_N'=\C_{F_N, E_N\times_R X\to E_n\times_R \Xf}$. By Prop \ref{prop:func}, the fiber of $Z_N'\to E_N$ over a point corresponding to a free \'etale $R$-algebra $S$ equipped with a basis as an $R$-module is $\C_{S, X\to \Xf}$. By Lemma \ref{rem:suffdisc}, for $x\in \Xf(R)$ there is a finite free \'etale $R$-algebra, $S$, of some rank $N\geq 1$, such that $x$ lifts to a point of $X(S)$. Such an $x$ thus lifts to an $R$-point of $Z_N'$. By Prop \ref{prop:func}, the same property holds for any sufficiently disconnected $R$-algebra $R'$ and $x\in \Xf(R')$. The $Z_N'$ exhibit the desired properties, except they are algebraic spaces and not necessarily schemes.

  Assume for a moment we have proved the results in the second paragraph of the statement for algebraic spaces. Then we can find a cover $Z_N\to Z_N'$ such that for any sufficiently disconnected $R$-algebra $R'$, $Z_N(R')\to Z_N'(R')$ is surjective. Because $\bigcup_N Z_N'(R')=\Xf(R')$, we also have $\bigcup_N Z_N(R')=\Xf(R')$.

  We now prove the results in the second paragraph of the statement. Assume that $\Xf$ is a Deligne-Mumford stack and let $X\to \Xf$ be a separated \'etale cover  of degree less than or equal to $n$ for some $n\geq 1$ by an affine scheme. Applying Lemma \ref{rem:suffdisc} to $X\to \Xf$ for any $x\in \Xf(R)$ there must be a free \'etale $R$-algebra $S$ of rank $n!$ and $x'\in \Xf(S)$ such that $x$ and $x'$ have the same image in $\Xf(S)$. In fact the same conclusion holds for any sufficiently disconnected $R$-algebra $R'$ and any $x\in \Xf(R')$ (by applying Lemma \ref{rem:suffdisc} to $X\otimes_R  R'\to \Xf\otimes_R R'$). This implies that for any such $R'$, $Z_{n!}'(R')\to \Xf(R')$ is surjecive. If $\Xf$ is an algebraic space, then by Proposition \ref{prop:repscheme} $Z_{n!}'$ is a scheme and we take $Z=Z_{n!}'$

  If $\Xf$ is just a Deligne-Mumford stack, $Z_{n!}'$ is only an algebraic stack. By the now completed algebraic space case, we may find a cover $Z\to Z_{n!}'$ by a scheme such that for every  any sufficiently disconnected $R$-algebra $R'$, $Z(R')\to Z_{n!}'(R')$ is surjective. The composite $Z\to Z_{n!}\to \Xf$ then has the desired properties.
\end{proof}

\begin{rmk}\label{def:cospace}
  If $\Xf$ is an algebraic stack over $R$ and $\Yf$ a Deligne-Mumford stack over $R$ with a representable map $\Yf\to \Xf$. The construction in the proof of Proposition \ref{prop:suffconnDM} gives $Z_N$ that are almost functorial in the following sense: for some $N\geq 1$ and all sufficiently disconnected $R$-algebra $R'$, $(Z_N\times_{\Xf} \Yf)(R')\to R'$ is surjective.
\end{rmk}

\section{Smooth quotient property}
In this section, $R$ will be a sufficiently disconnected topological ring together a good topologization of $R$-points of finite-type $R$-schemes with the smooth quotient property.

\begin{prop}\label{12x}
  Let $X$ be an algebraic space over $R$. Let $\pi_U:U\to X$ and $\pi_V:V\to X$ be smooth covers by schemes such that the induced maps on $R$-points are surjective. Let $\pi_U(U(R))$ denote $X(R)$ with the quotient topology from $U(R)$. Similarly define $\pi_V(V(R))$. Let $f:U\to  V$ be an $X$-morphism. The natural continuous map $\pi_U(U(R))\to \pi_V(V(R))$ is a homeomorphism.
\end{prop}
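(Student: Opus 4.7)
The plan is to show that both topologies on $X(R)$ coincide by exhibiting them as quotient topologies induced from a common scheme cover, obtained by suitably resolving the fiber product $W = U \times_X V$. First I would form $W$, which is an algebraic space because $X$ is an algebraic space and $U, V$ are schemes. The two projections $p_U: W \to U$ and $p_V: W \to V$ are smooth (as base changes of the smooth morphisms $\pi_V$ and $\pi_U$ respectively). Applying Proposition \ref{prop:suffconnDM} to $W$ provides a smooth cover $\widetilde W \to W$ by a scheme such that $\widetilde W(R) \to W(R)$ is surjective. The composites $\widetilde W \to U$ and $\widetilde W \to V$ are then smooth morphisms of schemes.

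Next I would verify that both $\widetilde W(R) \to U(R)$ and $\widetilde W(R) \to V(R)$ are surjective. Given $u \in U(R)$, the hypothesized surjectivity of $\pi_V$ on $R$-points produces a $v \in V(R)$ with $\pi_V(v) = \pi_U(u) \in X(R)$; the pair $(u,v)$ is then a point of $W(R) = U(R) \times_{X(R)} V(R)$, and composing with the surjection $\widetilde W(R) \twoheadrightarrow W(R)$ gives a preimage in $\widetilde W(R)$. The argument for $V$ is symmetric. Since the setup of this section assumes the smooth quotient property, both $\widetilde W(R) \to U(R)$ and $\widetilde W(R) \to V(R)$ are quotient maps.

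Composing each of these with the defining quotient maps $U(R) \to \pi_U(U(R))$ and $V(R) \to \pi_V(V(R))$, and using that a composition of quotient maps is a quotient map, yields two quotient maps $\widetilde W(R) \to X(R)$. These two composites agree set-theoretically because the diagram $\widetilde W \to W \rightrightarrows U, V \to X$ commutes by the universal property of $W$. Hence $\pi_U(U(R))$ and $\pi_V(V(R))$ both carry the quotient topology induced from the same map $\widetilde W(R) \to X(R)$, so they coincide as topological spaces, and the natural continuous map $\pi_U(U(R)) \to \pi_V(V(R))$ is a homeomorphism.

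I do not anticipate serious obstacles beyond carefully threading the surjectivity on $R$-points through the fiber product and the scheme resolution; the main inputs are Proposition \ref{prop:suffconnDM} (to realize $W$ by a scheme cover with the right surjectivity property), the smooth quotient property (to upgrade smooth surjections of schemes on $R$-points to quotients), and the elementary fact that a composition of quotient maps is a quotient. Note that the morphism $f: U \to V$ plays no role beyond a priori ensuring that the map $\pi_U(U(R)) \to \pi_V(V(R))$ is well-defined and continuous via property (2); the stronger conclusion that the two topologies coincide is independent of the existence of $f$.
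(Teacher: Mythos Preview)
Your proposal is correct and follows essentially the same approach as the paper: exhibit both quotient topologies on $X(R)$ as quotients of the $R$-points of (a resolution of) the fiber product $U\times_X V$, using the smooth quotient property for the two projections. The only difference is that you insert an extra step, resolving $W=U\times_X V$ by a scheme $\widetilde W$ via Proposition~\ref{prop:suffconnDM}, whereas the paper works directly with $Z=U\times_X V$ and applies the smooth quotient property to $Z(R)\to U(R)$ and $Z(R)\to V(R)$; this is justified because for an algebraic space $X$ the diagonal $X\to X\times X$ is a separated locally quasi-finite monomorphism and hence representable by schemes, so $U\times_X V$ is already a scheme and your detour through $\widetilde W$ is unnecessary (though harmless).
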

\begin{proof}
  Note that as sets $\pi_U(U(R))\to \pi_V(V(R))$ is just the identity $X(R)\to X(R)$.
  
  Set $Z=U\times_X V$, and consider the diagram
  \begin{center}
  \begin{tikzcd}
      Z\ar[r]\ar[d]& U\ar[d,"\pi_U"]\\
      V\ar[r,"\pi_V"]& X.
    \end{tikzcd}
  \end{center}

  The top and left arrow are smooth morphisms of schemes, and the map $Z(R)\to U(R)$ is surjective and thus a quotient by the smooth quotient property. Therefore, $\pi_U(U(R))$ is a quotient of $Z(R)$. Similarly, $Z(R)\to V(R)$ is surjective thus also a quotient, so the composite $Z(R)\to V(R)\to \pi_V(V(R))$ is a quotient

  Therefore, $\pi_U(U(R))$ and $\pi_V(V(R))$ are both quotients of $Z(R)$: both spaces are the underlying set $X(R)$ with the topology given as a quotient of $Z(R)$ by the map $Z(R)\to X(R)$. Therefore, the topologies on both $\pi_U(U(R))$ and $\pi_V(V(R))$ match, so the natural morphism  $\pi_U(U(R))\to \pi_V(V(R))$ must be a homeomorphism.
\end{proof}

\begin{prop}\label{ref:126}
  Let $X$ be an algebraic space over $R$, and $\pi_U:U\to X$ be a smooth cover by a scheme such that $U(R)\to X(R)$ is surjective. Then when $X(R)$ is given the topology as in Definition \ref{def:top}, $U(R)\to X(R)$ is a quotient map of topological spaces.
\end{prop}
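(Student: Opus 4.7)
The plan is to show that the topology on $X(R)$ from Definition \ref{def:top}, call it $\tau_{\mathrm{colim}}$, coincides with the quotient topology $\tau_U$ induced by $\pi_U$. I would argue the two inclusions separately.

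For $\tau_{\mathrm{colim}}\subseteq \tau_U$, I observe that by hypothesis $U(R)\to X(R)$ is surjective, so $X(R)_U=\pi_U(U(R))$ equals $X(R)$ as a set, and $X(R)_U$ is itself one of the objects appearing in the diagram $\C_X$ used to form the colimit in Definition \ref{def:top}. Thus the identity map from $X(R)_U$ to $\colim_{Z\in\C}X(R)_Z$ is continuous. This means every $\tau_{\mathrm{colim}}$-open has open preimage in $U(R)$, so $\tau_{\mathrm{colim}}\subseteq \tau_U$.

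For the reverse inclusion $\tau_U\subseteq \tau_{\mathrm{colim}}$, I would use Remark \ref{rem:filtered} to reindex the colimit over the filtered category whose morphisms are the canonical inclusions $Z_1\hookrightarrow Z_1\coprod Z_2$. In this description, a set $V\subseteq X(R)$ lies in $\tau_{\mathrm{colim}}$ if and only if, for every smooth cover $Z\to X$ by a scheme, the preimage $\pi_Z^{-1}(V)$ is open in $Z(R)$. So it suffices to check: if $V\in\tau_U$ and $Z\to X$ is an arbitrary smooth cover, then $\pi_Z^{-1}(V)$ is open in $Z(R)$.

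The key trick is to form $W:=Z\coprod U\to X$, which is again a smooth cover. Because $U(R)\to X(R)$ is surjective, so is $W(R)\to X(R)$, and the inclusion $U\hookrightarrow W$ is an $X$-morphism. Applying Proposition \ref{12x} to this morphism shows that the quotient topology $\tau_U$ coincides with the quotient topology on $X(R)$ coming from $W(R)=Z(R)\coprod U(R)$. Hence, for $V\in\tau_U$, the preimage $\pi_W^{-1}(V)=\pi_Z^{-1}(V)\coprod \pi_U^{-1}(V)$ is open in $Z(R)\coprod U(R)$, which forces each component, and in particular $\pi_Z^{-1}(V)$, to be open. Since $Z$ was arbitrary, this gives $V\in\tau_{\mathrm{colim}}$, completing the proof. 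The only real content is the $W=Z\coprod U$ construction; everything else is just unwinding the definition of the colimit topology and applying Proposition \ref{12x}, so I do not anticipate any obstacle.
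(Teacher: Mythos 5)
Your proof is correct and uses the same essential ingredients as the paper: the $U \coprod Z$ trick to make the covers receiving a map from $U$ cofinal, together with Proposition \ref{12x} to see that the quotient topologies from all such covers agree. The paper phrases this as a cofinality argument in the colimit category, while you unwind the colimit topology explicitly and check the two inclusions of topologies by hand, but the mathematical content is the same.
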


\begin{proof}

  Recall that in Definition \ref{def:top}, we define $X(R)$ as a certain colimit over the category of smooth covers of $X$ by schemes. If $V$ is any smooth cover of $X$ by a scheme, then $U\coprod V$ is another smooth cover. Thus the smooth covers of $X$ which receive an $X$-morphism from $U$ are final in this category. Therefore, $X(R)=\colim_{V\to X}\pi_V(V(R))$ (with notation as in Definition \ref{def:top}) where the colimit is now is over smooth covers of $X$ which receive an $X$-morphism from  $U$.

  By Proposition \ref{12x}, for each smooth cover $V\to X$ by a scheme with an $X$-morphism $U\to V$, the associated map $\pi_U(U(R))\to \pi_V(V(R))$ is a homeomorphism. Therefore, every such $\pi_V(V(R))$ is homemorphic to any other, so every morphism in the colimit is a homeomorphism. We conclude that $\pi_U(U(R))\to X(R)$ must therefore be a homeomorphism. As $\pi_U(U(R))$ is by definition a quotient of $U(R)$, $X(R)$ has the topology as a quotient of $U(R)$.
\end{proof}

\begin{prop}\label{12y}
  Let $f:X\to Y$ be a morphism of algebraic spaces over $R$. Then $X(R)\to Y(R)$ is continuous. Moreover, if $f$ is smooth and $X(R)\to Y(R)$ is surjective, then $X(R)\to Y(R)$ is a quotient.
\end{prop}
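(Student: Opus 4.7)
The plan is to reduce everything to morphisms of schemes, where the smooth quotient property is directly available. Since algebraic spaces are Deligne--Mumford and $R$ is sufficiently disconnected, Proposition \ref{prop:suffconnDM} yields single smooth covers $\pi_U : U \to X$ and $\pi_V : V \to Y$ by schemes whose induced maps $U(R)\to X(R)$ and $V(R)\to Y(R)$ are surjective, and Proposition \ref{ref:126} makes both into quotient maps. To bridge $U$ and $V$, I would form the algebraic space $U\times_Y V$ and apply Proposition \ref{prop:suffconnDM} once more to obtain a scheme $Z$ and a smooth cover $Z\to U\times_Y V$ that is surjective on $R$-points. The two composites $Z\to U$ and $Z\to V$ are then morphisms of schemes.

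For continuity, it suffices to show the composite $U(R)\to X(R)\to Y(R)$ is continuous. Openness of $W\subseteq Y(R)$ is detected by openness of its preimage $W_V$ in $V(R)$; let $W_U$ be the preimage in $U(R)$ and $W_Z$ the preimage in $Z(R)$. Commutativity shows $W_Z$ is simultaneously the preimage of $W_U$ under $Z(R)\to U(R)$ and of $W_V$ under $Z(R)\to V(R)$. The latter is open because $Z\to V$ is a scheme morphism (axiom (2) of good topologization). Meanwhile $Z\to U$ is smooth (pullback of the smooth $V\to Y$ followed by the cover $Z\to U\times_Y V$), and $Z(R)\to U(R)$ is surjective: given $u\in U(R)$ with image $y\in Y(R)$, lift $y$ to some $v\in V(R)$, giving $(u,v)\in (U\times_Y V)(R)$, which further lifts to $Z(R)$. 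The smooth quotient property then makes $Z(R)\to U(R)$ a quotient, forcing $W_U$ open; continuity of $X(R)\to Y(R)$ follows.

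For the moreover claim, smoothness of $f$ passes to $U\to Y$, then by base change to $U\times_Y V\to V$, then to $Z\to V$. Under the extra hypothesis that $X(R)\to Y(R)$ is surjective, a four-step lift $v\mapsto y\mapsto x\mapsto u\mapsto z$ shows $Z(R)\to V(R)$ is surjective, so by the smooth quotient property it is a quotient; composing with the quotient $V(R)\to Y(R)$ exhibits $Z(R)\to Y(R)$ as a quotient. This same composite also factors as $Z(R)\to U(R)\to X(R)\to Y(R)$, and the first two arrows are quotients, so $Z(R)\to X(R)$ is a quotient. Cancelling this quotient on the left, and using continuity of $X(R)\to Y(R)$ from the first part, yields the quotient property for $X(R)\to Y(R)$.

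The principal subtlety is the repeated surjectivity bookkeeping: each scheme cover must be chosen with surjective $R$-points via Proposition \ref{prop:suffconnDM} and the sufficient-disconnectedness of $R$, and the lifts through successive covers must be arranged compatibly. A secondary care is that $Z\to U$ and $Z\to V$, though a priori morphisms of algebraic spaces, are in fact scheme morphisms because their targets are schemes, which is what unlocks the smooth quotient property and axiom (2).
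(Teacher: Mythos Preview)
Your proof is correct and follows essentially the same approach as the paper: reduce to scheme morphisms by choosing smooth covers with surjective $R$-points (via Proposition~\ref{prop:suffconnDM}), invoke the smooth quotient property at the scheme level, and push the conclusion down using the quotient maps (the paper packages this last step as Propositions~\ref{top:a} and~\ref{top:b}). The only difference is cosmetic: the paper covers $Y$ first by a scheme $U$, sets $Z=X\times_Y U$, and then covers $Z$ by a scheme $W$, thereby obtaining the cover of $X$ for free, whereas you cover $X$ and $Y$ independently and then pass to $U\times_Y V$; this costs you one extra cover but changes nothing substantive.
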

\begin{proof}
  Let us first do the case of general $f$. By Proposition \ref{prop:suffconnDM}, there exists a smooth cover $U\to Y$ by a scheme such that $U(R)\to Y(R)$ is surjective. Set $Z=X\times_Y U$, and let $W\to Z$ be a smooth cover by a scheme such that $W(R)\to Z(R)$ is surjective again by Proposition \ref{prop:suffconnDM}. Then $W\to X$ is a smooth cover of $X$ by a scheme such that $W(R)\to X(R)$ is surjective. By Proposition \ref{ref:126} $W(R)\to X(R)$ is a quotient map. Then by Proposition \ref{top:a}, we conclude that $X(R)\to Y(R)$ is continuous.

  Now assume $f$ is smooth and $X(R)\to Y(R)$ is surjective. This implies that $Z(R)\to U(R)$ is surjective and thus $W(R)\to U(R)$ is surjective and thus a quotient by the smooth quotient property. The smooth quotient property implies that $W(R)\to U(R)$ is a quotient. Thus Proposition \ref{top:b} implies that $X(R)\to Y(R)$ must be a quotient as desired.
\end{proof}

\section{Functorialities}\label{sec:func}
In this section, $R$ is a sufficiently disconnected topological ring together a good topologization of $R$-points of finite-type $R$-schemes with the smooth quotient property.

\begin{lem}\label{lem:dd}
  Let $X$ be an algebraic space over $R$, $\Yf$ be an algebraic stack over $R$, and $X\to \Yf$ be a map over $R$. Then there exists a smooth covering $Y\to \Yf$ by a scheme such that the canonical map $(X\times_{\Yf}Y)(R)\to X(R)$ is a surjective quotient.
\end{lem}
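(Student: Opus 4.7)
The plan is to combine Proposition \ref{prop:suffconnDM} with the almost-functoriality property recorded in Remark \ref{def:cospace} to produce a single smooth cover $Y\to\Yf$ by a scheme for which every $R$-point of $X$ admits a lift to $(X\times_{\Yf}Y)(R)$; the smooth quotient property for algebraic spaces (Proposition \ref{12y}) will then upgrade this surjectivity on $R$-points to a quotient map.

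Before constructing $Y$, I would record two structural preliminaries. First, since $X$ is an algebraic space and $\Yf$ is an algebraic stack, the map $X\to\Yf$ is automatically representable, so the fiber product $X\times_{\Yf}Y$ is an algebraic space for any scheme $Y$ mapping to $\Yf$; this is precisely what is needed for Proposition \ref{12y} to apply to the projection $X\times_{\Yf}Y\to X$. Second, that projection, being the base change of a smooth morphism, is automatically smooth. Hence the only real content is to arrange surjectivity on $R$-points, after which the quotient property is immediate.

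To build $Y$, I would apply Proposition \ref{prop:suffconnDM} to $\Yf$ to obtain the family of smooth covers $\pi_N\colon Z_N\to\Yf$ for $N\geq 1$. Then I would invoke Remark \ref{def:cospace} with the roles of $\Xf$ and $\Yf$ there played by our $\Yf$ and our $X$ respectively; this is permissible because $X$ is an algebraic space, hence a Deligne--Mumford stack, and the map $X\to\Yf$ is representable. The remark delivers a single index $N\geq 1$ for which $(X\times_{\Yf}Z_N)(R')\to X(R')$ is surjective for every sufficiently disconnected $R$-algebra $R'$. Since $R$ is itself sufficiently disconnected by the standing hypothesis of the section, taking $R'=R$ and setting $Y:=Z_N$ gives a smooth cover $Y\to\Yf$ by a scheme with $(X\times_{\Yf}Y)(R)\to X(R)$ surjective. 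Proposition \ref{12y} then concludes that this map is a quotient of topological spaces.

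The main obstacle is not the execution, which is essentially formal once the right tool is in hand, but rather the need to produce a \emph{single} cover: Proposition \ref{prop:suffconnDM} on its own only supplies a countable family $\{Z_N\}$ whose images jointly cover $\Yf(R)$, and replacing this family by $\coprod_N Z_N$ would leave the finite-type setting imposed by the paper's conventions. The almost-functoriality asserted in Remark \ref{def:cospace} is precisely what collapses the index set to a single $N$ once one restricts attention to an algebraic space living over $\Yf$, and is therefore the crux of the argument.
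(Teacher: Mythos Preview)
Your proposal is correct and follows exactly the paper's route: the paper's proof consists of the single observation that Proposition~\ref{12y} reduces the claim to surjectivity on $R$-points, which is then supplied by Remark~\ref{def:cospace}. Your write-up simply unpacks these two invocations, making explicit the representability of $X\to\Yf$, the role-swap when applying the remark, and the reason a single $Z_N$ suffices.
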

\begin{proof}
  It suffices to find a $Y$ such that $(X\times_{\Yf}Y)(R)\to X(R)$ is surjective on $R$-points by Proposition \ref{12y}. Remark \ref{def:cospace}, explains the existence of such a $Y$.

\end{proof}

\begin{prop}\label{prop:de}
  Let $X$ be an algebraic space over $R$, $\Yf$ be an algebraic stack over $R$, and $X\to \Yf$ be a map over $R$. Then the natural map $X(R)\to \Yf(R)$ is continuous.
\end{prop}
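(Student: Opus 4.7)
The plan is to reduce continuity of $X(R)\to\Yf(R)$ to two already-established continuity facts by means of a pullback diagram. First, I would apply Lemma \ref{lem:dd} to produce a smooth cover $Y\to\Yf$ by a scheme such that the induced map $(X\times_{\Yf}Y)(R)\to X(R)$ is a surjective quotient of topological spaces. Note that since $Y$ is a scheme, the smooth cover $Y\to\Yf$ is representable, so the fiber product $X\times_{\Yf}Y$ is itself an algebraic space, and the natural map $X\times_{\Yf}Y\to Y$ is a morphism of algebraic spaces.

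Next, I would use the universal property of the quotient: to show $X(R)\to\Yf(R)$ is continuous, it suffices to show the composition
\[
(X\times_{\Yf}Y)(R)\to X(R)\to\Yf(R)
\]
is continuous. By commutativity of the defining pullback square, this composition coincides with the composition
\[
(X\times_{\Yf}Y)(R)\to Y(R)\to\Yf(R).
\]
The first map here is continuous by Proposition \ref{12y}, since it is the map on $R$-points induced by a morphism of algebraic spaces. The second map is continuous essentially by construction: $Y\to\Yf$ is an object of the category $\C_\Yf$ appearing in Definition \ref{def:top}, so $Y(R)\to\Yf(R)_Y\hookrightarrow\Yf(R)$ is continuous as the structure map into the defining colimit (the first factor is a quotient, the second is the canonical map into the colimit). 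Composing these two continuous maps gives continuity of $(X\times_{\Yf}Y)(R)\to\Yf(R)$, and then passing through the quotient $(X\times_{\Yf}Y)(R)\to X(R)$ yields continuity of $X(R)\to\Yf(R)$.

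There is no real obstacle here; the argument is essentially an exercise in unwinding the definitions once Lemma \ref{lem:dd} is in hand. The only subtlety worth being careful about is that we genuinely need the map $(X\times_{\Yf}Y)(R)\to X(R)$ to be a \emph{quotient} (not merely surjective and continuous) in order to transfer continuity from the composition, which is exactly what Lemma \ref{lem:dd} provides.
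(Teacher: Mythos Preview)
Your proof is correct and follows essentially the same approach as the paper. The paper packages the universal property of the quotient via Proposition \ref{top:a} and factors through the intermediate space $\pi(Y(R))$ (the image of $Y(R)$ in $\Yf(R)$ with the quotient topology), but the logical content is the same: pull back along a smooth cover $Y\to\Yf$ chosen via Lemma \ref{lem:dd}, use that $(X\times_{\Yf}Y)(R)\to X(R)$ is a quotient, and transfer continuity from the composite through $Y(R)$.
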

\begin{proof}
  We choose $\pi:Y\to \Yf$ as in Lemma \ref{lem:dd} with $Y$ a scheme. Let $\pi(Y(R))\subseteq \Yf(R)$ given the quotient topology from $Y(R)$.
  
  Consider the following diagram:
  \begin{center}
    \begin{tikzcd}
      X\times_{\Yf}Y\ar{r}\ar{d}& Y\ar{d}{\pi}\\
      X\ar{r}& \Yf.
    \end{tikzcd}
  \end{center}
  Taking $R$-points, we get a commutative diagram of sets
  \begin{center}
    \begin{tikzcd}
      (X\times_{\Yf}Y)(R)\ar{r}\ar{d}& Y(R)\ar{d}{\pi}\\
      X(R)\ar{r}& \pi(Y(R)).
    \end{tikzcd}
  \end{center}

  The right, left, and top maps are continuous. The left map is a quotient by Lemma \ref{lem:dd}. Thus Proposition \ref{top:a} implies that $X(R)\to \pi(Y(R))$ is continuous. The map $\pi(Y(R))\to \Yf(R)$ is continuous by construction of the topology on $\Yf(R)$. Therefore the composite $X(R)\to \Yf(R)$ is continuous.
\end{proof}  

\begin{thm}
  Let $\Xf$ and $\Yf$ be stacks over $R$ with a $R$-morphism $\Xf\to \Yf$, then the natural map of sets $\Xf(R)\to \Yf(R)$ is continuous.
\end{thm}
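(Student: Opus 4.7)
The plan is to unwind the colimit definition of the topology on $\Xf(R)$ given in Definition \ref{def:top} and reduce to the algebraic-space case already treated in Proposition \ref{prop:de}. Recall that $\Xf(R)$ carries the colimit topology $\colim_{Z\in \C_\Xf}\Xf(R)_Z$, where $Z$ runs over schemes smoothly covering $\Xf$ and each $\Xf(R)_Z\subseteq \Xf(R)$ is equipped with the quotient topology from $Z(R)$. By Theorem \ref{thm:lift}, every point of $\Xf(R)$ lies in some $\Xf(R)_Z$, so the subsets $\Xf(R)_Z$ exhaust $\Xf(R)$. Consequently, the universal property of the colimit topology asserts that a map out of $\Xf(R)$ is continuous if and only if, for every smooth cover $Z\to \Xf$ from a scheme $Z$, the composite $Z(R)\to \Xf(R)_Z \hookrightarrow \Xf(R)\to \Yf(R)$ is continuous (the first arrow being continuous because $\Xf(R)_Z$ has the quotient topology from $Z(R)$).

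Thus the theorem reduces to the following claim: for each scheme $Z$ with smooth cover $Z\to \Xf$, the composite $Z(R)\to \Yf(R)$ induced by $Z\to \Xf\to \Yf$ is continuous. But this composite is the map on $R$-points induced by the morphism of stacks $Z\to \Yf$, where $Z$ is a scheme and hence in particular an algebraic space. Proposition \ref{prop:de} asserts precisely that the $R$-point map associated to any morphism from an algebraic space to an algebraic stack is continuous, which completes the proof. There is essentially no obstacle in this final step: all the substantive work has been absorbed into the cocycle-space machinery of Section \ref{sec:cocycle}, into Theorem \ref{thm:lift}, and into Proposition \ref{prop:de}. Once those are in place, passing from an algebraic-space source to a general algebraic-stack source is a purely formal consequence of the definition of $\Xf(R)$ as a colimit of quotients of the spaces $Z(R)$.
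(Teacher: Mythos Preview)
Your proof is correct and follows essentially the same route as the paper's: both reduce via the colimit definition of $\Xf(R)$ to the continuity of $Z(R)\to \Yf(R)$ for each smooth cover $Z\to \Xf$ by a scheme, and both invoke Proposition~\ref{prop:de} for that step. Your write-up simply makes explicit the universal property of the colimit and the role of the quotient topology on each $\Xf(R)_Z$, which the paper leaves implicit.
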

\begin{proof}
  Let $\pi:X\to \Xf$ be a smooth cover by a scheme. Let $\pi(X(R))$ denote the set $\pi(X(R))$ with the quotient topology from $ X(R)$. By Proposition \ref{prop:de}, the natural maps $X(R)\to \Yf(R)$ are continuous. Furthermore they factor through $X(R)\to \pi(X(R))$ . As $\Xf(R)=\colim_{\pi:X\to \Xf}\pi(X(R))$, by definition, we thus conclude the map  $\Xf(R)\to \Yf(R)$ is continuous.
\end{proof}

\begin{prop}\label{ref:ringfunc}
  Let $R'$ (in addition to R) be a sufficiently disconnected ring together with a good topologization of the $R'$-points on finite-type $R'$-schemes with the smooth quotient property. Let $R\to R'$ be a continuous ring homomorphism, and assume furthermore, that $X(R)\to X(R')$ is continuous for all finite-type $R$-schemes $X$. For each  algebraic stack $\Xf$ over $R$, the natural map $\Xf(R)\to \Xf_{R'}(R')$ is continuous.
\end{prop}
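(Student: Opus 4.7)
The plan is to use the colimit definition of the topology on $\Xf(R)$ to reduce continuity of the stack-level map to continuity of a scheme-level map that is already handled by hypothesis. By Definition~\ref{def:top}, $\Xf(R)$ carries the colimit topology $\colim_{Z\in\C_\Xf}\pi_Z(Z(R))$, where each $\pi_Z(Z(R))$ has the quotient topology from $Z(R)$. So to check that $\Xf(R)\to \Xf_{R'}(R')$ is continuous, it suffices by the universal property of this colimit to check that for every smooth cover $\pi_Z\colon Z\to \Xf$ by an $R$-scheme $Z$, the composite
\[
Z(R)\longrightarrow \pi_Z(Z(R))\hookrightarrow \Xf(R)\longrightarrow \Xf_{R'}(R')
\]
is continuous. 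Moreover, by Remark~\ref{rem:filtered}, we may even restrict to a filtered subcategory of such $Z$, though that is not needed for the present argument.

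Next I would rewrite this composite, using functoriality of points under base change, as
\[
Z(R)\longrightarrow Z_{R'}(R')\longrightarrow \Xf_{R'}(R'),
\]
where $Z_{R'}\to \Xf_{R'}$ is the base change of $\pi_Z$ along $\Spec R'\to \Spec R$, which is itself a smooth cover by an $R'$-scheme. The first arrow is continuous by the standing hypothesis on $R\to R'$ applied to the finite-type $R$-scheme $Z$ (using the canonical identification $Z(R')=Z_{R'}(R')$). The second arrow is continuous because $Z_{R'}$ is an object of $\C_{\Xf_{R'}}$: applying Definition~\ref{def:top} to $\Xf_{R'}$ over $R'$, the map $Z_{R'}(R')\to \Xf_{R'}(R')$ factors as the quotient onto $\pi_{Z_{R'}}(Z_{R'}(R'))$ followed by the canonical map into the colimit, each of which is continuous by construction.

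Composing these two continuous steps gives continuity of the composite $Z(R)\to \Xf_{R'}(R')$ for every $Z\in \C_\Xf$, whence $\Xf(R)\to \Xf_{R'}(R')$ is continuous. I do not expect a real obstacle; the proof is essentially a diagram chase through Definition~\ref{def:top}, and the only input beyond the definition itself is the assumed continuity of scheme-level base-change maps and the fact that base change sends smooth covers by $R$-schemes to smooth covers by $R'$-schemes.
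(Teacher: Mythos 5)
Your proposal is correct and follows essentially the same route as the paper. Both arguments run a colimit diagram chase through Definition~\ref{def:top}, and both reduce continuity to the hypothesis that $Z(R)\to Z(R')$ is continuous for each smooth cover $Z\to\Xf$; the only cosmetic difference is that the paper applies Proposition~\ref{top:a} to the square relating $Z(R)$, $Z(R')$, $\pi(Z(R))$, $\pi(Z(R'))$ and then passes to the colimit, whereas you invoke the universal property of the colimit directly and factor each $Z(R)\to\Xf_{R'}(R')$ through $Z_{R'}(R')$, which amounts to the same bookkeeping.
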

\begin{proof}
  Let $\pi:Z\to \Xf$ be a smooth cover by an $R$-scheme. Give $\pi(Z(R))$ the quotient topology from $Z(R)$ and $\pi(Z(R'))$ the quotient topology from $Z(R')$. There is a commutative diagram (as sets for now):
  \begin{center}
    \begin{tikzcd}
      Z(R)\ar[r]\ar[d]& Z(R')\ar[d]\\
      \pi( Z(R))\ar[r]& \pi (Z(R')).
    \end{tikzcd}
  \end{center}

  The two vertical arrows are quotient maps, and the top arrow is continuous. This implies the bottom map is continuous by Proposition \ref{top:a}

  Taking a colimit over $Z$, we conclude that the map \[\colim_{\pi:Z\to \Xf}\pi (Z(R))\to \colim_{\pi:Z'\to \Xf\times_{\Spec R}\Spec R'}\pi (Z'(R'))\] is continuous. This precisely says that $\Xf(R)\to \Xf(R')$ is continuous.
\end{proof}

\begin{prop}\label{quoquo}
  Let $X$ be an algebraic space over $R$ and and $\Xf$ an algebraic stack over $R$. Let $X\to \Xf$ be a smooth covering such that $X(R)\to \Xf(R)$ is surjective. Then $X(R)\to \Xf(R)$ is a quotient map.
\end{prop}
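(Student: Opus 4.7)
The plan is to reduce the problem to the scheme-over-stack case by choosing a scheme-level smooth cover of $X$, then invoke the colimit definition of the topology on $\Xf(R)$. First, I would apply Proposition \ref{prop:suffconnDM} to the algebraic space $X$ to obtain a smooth cover $\rho : Y \to X$ by a scheme $Y$ with $\rho(Y(R)) = X(R)$; Proposition \ref{ref:126} then says that $Y(R) \to X(R)$ is already a quotient map.

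Next, I would upgrade the composite $\pi \circ \rho : Y \to \Xf$ (where $\pi : X \to \Xf$ is the given cover) to a quotient map on $R$-points. This composite is itself a smooth cover of $\Xf$ by a scheme, and surjectivity on $R$-points is inherited from the two factors. The proof of Proposition \ref{ref:126} adapts verbatim to $\Xf$: in the colimit $\Xf(R) = \colim_{V \to \Xf} \pi_V(V(R))$ from Definition \ref{def:top}, the smooth covers $V$ admitting an $\Xf$-morphism from $Y$ form a cofinal subcategory (as in Remark \ref{rem:filtered}), so one only needs a stacky analog of Proposition \ref{12x} to see that all such quotient topologies on $\Xf(R)$ coincide. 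The stacky analog is obtained by the same proof after noting that for smooth covers $U,V \to \Xf$ by schemes whose $R$-points surject onto $\Xf(R)$, the fiber product $U \times_\Xf V$ is an algebraic space (by representability of the diagonal of $\Xf$), and an $R$-point of $U \times_\Xf V$ lying over a chosen $(u,v) \in U(R) \times V(R)$ is exactly a choice of isomorphism between their images in $\Xf(R)$, which exists precisely when $u$ and $v$ map to the same isomorphism class.

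Finally, a standard topology argument closes the proof: one has a commutative triangle
\[
\begin{tikzcd}
Y(R) \ar[r,"\rho"] \ar[dr,"\pi\circ\rho"'] & X(R) \ar[d,"\pi"] \\
& \Xf(R)
\end{tikzcd}
\]
in which $Y(R) \to X(R)$ and $Y(R) \to \Xf(R)$ are both quotient maps, and $X(R) \to \Xf(R)$ is already continuous. If $W \subseteq \Xf(R)$ has $\pi^{-1}(W)$ open in $X(R)$, then pulling back along $\rho$ shows $(\pi \circ \rho)^{-1}(W)$ is open in $Y(R)$, which forces $W$ open in $\Xf(R)$.

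The main obstacle is the stacky analog of Proposition \ref{12x}. The verification that the projections $U \times_\Xf V \to U$ and $U \times_\Xf V \to V$ are surjective on $R$-points now involves a 2-categorical subtlety — two $R$-points with isomorphic images in $\Xf(R)$ only assemble into an $R$-point of $U \times_\Xf V$ after a choice of isomorphism — but once this is in place the smoothness of the projections and the smooth quotient property carry over unchanged, and the rest of the argument is purely formal.
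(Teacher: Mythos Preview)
Your proposal is correct and follows essentially the same strategy as the paper: both arguments boil down to a stacky analog of Proposition~\ref{12x}, using that for smooth covers $U,V\to\Xf$ surjective on $R$-points the fiber product $U\times_{\Xf}V$ is an algebraic space whose projections are smooth and surjective on $R$-points (hence quotients by Proposition~\ref{12y}), so all such covers induce the same quotient topology on $\Xf(R)$, which then agrees with the colimit topology by cofinality. The only difference is organizational: the paper compares $X$ directly to an arbitrary scheme cover $U\to\Xf$ via $U\times_{\Xf}X$, whereas you first pass from $X$ to a scheme cover $Y\to X$ and then run the comparison among scheme covers of $\Xf$, closing with the triangle $Y(R)\to X(R)\to\Xf(R)$; this extra reduction is harmless but unnecessary.
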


\begin{proof}
  Let  $\pi:U\to \Xf$ be a smooth cover by a scheme such that $U(R)\to \Xf(R)$ is surjective. At least one exists as we can can take $U$ to be a cover of $X$ such that $U(R)\to X(R)$ is surjective using Proposition \ref{prop:suffconnDM}. In any case, then $U\times_{\Xf} X\to X$ is surjective on $R$-points so is a quotient map by Proposition \ref{12y}. Similarly, $U\times_{\Xf}X\to U$ is a quotient on $R$-point. Therefore, topologizing $\Xf(R)$ as a quotient of $U$ is the same as topologizing it as a quotient of $X$. Smooth covers of $\Xf$ that are surjective on $R$-points are final among all smooth covers (as long as one exists). The definition topology on $\Xf(R)$ is as a colimit over quotients of $U(R)$ where $U\to \Xf$ is a smooth cover and by the last sentence we can restrict ourselves to covers such that $U(R)\to \Xf(R)$ is surjective. We conclude that $\Xf(R)$ must have the topology as a quotient of $X(R)$.
\end{proof}

\section{Algebraic spaces over essentially analytic rings}\label{sec:algspace}
In this section the ring $R$ is an essentially analytic topological ring.

\begin{prop}
  Let $X$ be an algebraic space over $R$, and let $\pi\colon U\to X$ be an \'{e}tale map from a scheme $U$. Equip $\pi(U(R))$ with topology as a quotient of $U(R)$. Then the map $U(R)\to \pi(U(R))$ is a local homeomorphism. In particular it is open.
\end{prop}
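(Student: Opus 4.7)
The plan is to show that for each $u\in U(R)$, there is an open neighborhood of $u$ in $U(R)$ on which $\pi$ restricts to a homeomorphism onto an open subset of $X(R)$. Once this is established, it follows formally that the quotient topology on $\pi(U(R))$ from $U(R)$ coincides with the subspace topology from $X(R)$: indeed, $\pi\colon U(R)\to \pi(U(R))$ becomes a continuous open surjection onto $\pi(U(R))$ with the subspace topology, hence a quotient map, and uniqueness of quotient topologies forces the two topologies to agree. The local homeomorphism statement (and in particular openness) then follows at once.

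Fix $u\in U(R)$ with image $x=\pi(u)\in X(R)$. The core step is to shrink $U$ around $u$ to an open subscheme on which $\pi$ is actually an open immersion into $X$. Since $\pi$ is \'etale and $U$ is a scheme, $\pi$ is representable by schemes, and the scheme-theoretic fiber $U_x:=U\times_{X,x}\Spec R$ is a scheme \'etale over $\Spec R$. The point $u$ provides a section $\Spec R\to U_x$, and since sections of \'etale morphisms are clopen immersions we obtain a decomposition $U_x=\Spec R\sqcup U'_x$. Under a mild local separatedness hypothesis on $X$---so that $\Spec R\xrightarrow{x}X$, and hence the fiber inclusion $U_x\hookrightarrow U$, is a locally closed immersion---we extract an open subscheme $U_0\subseteq U$ containing $u$ with $U_0\times_{X,x}\Spec R\cong \Spec R$ as a scheme: pass to an open $A\subseteq U$ in which $U_x$ is closed, then remove the closed piece $U'_x$ from $A$.

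Now $\pi|_{U_0}\colon U_0\to X$ is \'etale with scheme-theoretic fiber $\Spec R$ over $x$; equivalently, $\pi|_{U_0}$ is radiciel at $u$. By the classical fact that an \'etale radiciel morphism is locally an open immersion (and its algebraic-space analogue), a further open $U_1\subseteq U_0$ containing $u$ has $\pi|_{U_1}$ an open immersion into $X$. Open immersions of algebraic spaces induce open embeddings on $R$-points: they are \'etale, hence local homeomorphisms by the essentially analytic hypothesis (Definition~\ref{def:essan}), and they are injective. Therefore $U_1(R)\hookrightarrow X(R)$ is an open embedding, and $U_1(R)$ is open in $U(R)$ because open immersions of schemes give open subspaces on $R$-points. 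Setting $V:=U_1(R)$ yields the required neighborhood on which $\pi$ is a homeomorphism onto an open subset of $X(R)$.

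The main obstacle is the construction of $U_0$: this uses the sufficient disconnectedness of $R$ to split $U_x$ along the section $u$, together with some separatedness of $X$ to ensure the fiber inclusion $U_x\hookrightarrow U$ is locally closed, allowing removal of the unwanted piece $U'_x$. Once this has been carried out, the \'etale-plus-radiciel-is-open-immersion step and the open-immersion/$R$-point transfer are standard.
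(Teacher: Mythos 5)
The proposal takes a genuinely different route from the paper, and it contains a gap that is not easy to fill at this point in the development.

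The paper's proof works entirely with the quotient topology on $\pi(U(R))$ and never needs to reference the topology on $X(R)$: it uses that $U\times_X U$ is a \emph{scheme} (since $U$ is a scheme and the diagonal of $X$ is representable by schemes) on which the two projections to $U$ are \'etale maps of schemes, hence induce local homeomorphisms on $R$-points by the essentially analytic hypothesis. Combined with the set-level identification $(U\times_X U)(R) = U(R)\times_{X(R)} U(R)$ and the fact that the diagonal section is an open immersion, one gets both openness (by a direct saturation calculation) and local injectivity (by shrinking to a box $W\times W$ contained in the diagonal). Every \'etale map appearing in the argument is a map between schemes, so the axioms apply directly.

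Your proof, by contrast, routes through the topology on $X(R)$: shrink $U$ to an open $U_1$ so that $\pi|_{U_1}\colon U_1\to X$ is an open immersion, and then invoke that open immersions into algebraic spaces give open embeddings on $R$-points. The last step is where the gap lies. You justify it by saying open immersions ``are \'etale, hence local homeomorphisms by the essentially analytic hypothesis,'' but Definition~\ref{def:essan} only speaks of \'etale maps of finite-type $R$-\emph{schemes}; here the target $X$ is an algebraic space, whose $R$-point topology is defined as a colimit of quotients of schemes and is not covered by the essentially analytic axiom. Establishing ``open immersion of a scheme into an algebraic space induces an open embedding on $R$-points'' is true but requires its own argument (taking a smooth cover $V\to X$ by a scheme with $V(R)\to X(R)$ a surjective quotient via Propositions~\ref{prop:suffconnDM} and~\ref{ref:126}, pulling back $U_1$ to an open subscheme $W\subseteq V$, and matching the quotient and subspace topologies); this is comparable in difficulty to what the proposition itself is asserting, and it is close to what Propositions~\ref{prop:algopen} and~\ref{lochom} — which appear \emph{after} this one — eventually supply. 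The rest of your outline (splitting the \'etale fiber along the section using sufficient disconnectedness, localizing to where $\pi$ is radiciel, \'etale plus radiciel is locally an open immersion) is correct modulo the fact that the local separatedness you invoke is indeed part of the paper's standing convention on the diagonal (cf.\ the proof of Lemma~\ref{lem:12}), but the unjustified step about open immersions into algebraic spaces means the proof as written does not go through. The paper's argument via $U\times_X U$ is also notably shorter because it sidesteps any need to understand $X(R)$.
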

\begin{proof}
  Consider the following cartesian square:
  \[
    \begin{tikzcd}
      U\times_X U\ar{r}{\pi_1}\ar{d}{\pi_2}& U\ar{d}{\pi}\\
      U\ar{r}{\pi}& X.
    \end{tikzcd}
  \]

As the two arrows to $X$ are \'{e}tale, the two arrows from $U\times_X U$ are \'{e}tale.

This leads to a commutative diagram.
\[
  \begin{tikzcd}
    (U\times_{X} U)(R)\ar{r}{\pi_1}\ar{d}{\pi_2}& U(R)\ar{d}{\pi}\\
    U(R)\ar{r}{\pi}& \pi(U(R)).
    \end{tikzcd}
  \]

  We first prove that $U(R)\to \pi(U(R))$ is open. Let $V\subseteq U(R)$ be an open subset. Since $R$ is essentially analytic and each $\pi_i$ is an \'etale moprhism of schemes, they induce local homeomorphisms on $R$ points. Thus each $\pi_i$ induce open maps on $R$-points. Therefore, $\pi_2(\pi_1^{-1}(V))$ is open. Now since $(U\times_X U)(R)=U(R)\times_{X(R)}U(R)$ as sets, a set theoretic calculation shows that $\pi^{-1}(\pi(\pi_2(\pi_1^{-1}(V))))=\pi_2(\pi_1^{-1}(V))$. By the definition of the quotient topology, this implies that $\pi(\pi_2(\pi_1^{-1}(V)))$ is open, and this is equal to $\pi(V)$. Therefore, $\pi(V)$ is open in $\pi(U(R))$. Since $V$ was arbitrary, this proves that $U(R)\to \pi(U(R))$ is an open map.

  Now consider the map $U\times_X U\xrightarrow{\pi_1}U$. The map $u\mapsto (u,u)$ provides a section. As section of an \'etale morphism is an open immersion, the diagonal $\Delta\subseteq (U\times_X U)(R)$ is open.

  As $U\times_X U\to U\times_{R} U$ is a locally closed immersion, $(U\times_X U)(R)$ has the subspace topology inherited from $U(R)\times U(R)$. In particular, the topology on $(U\times_X U)(R)$ is generated by the restriction of opens of the form $V\times W$ for $V,W\subseteq U(R)$ open.
  
  Let us now check that $U(R)\to \pi(U(R))$ is a local homeomorphism at each point of $U(R)$. Let $y\in U(R)$. As the topology at $(y,y)\in (U\times_X U)(R)$ is generated by opens of the form $(W\times W)\cap (U\times_X U)(R)$ for $W\subseteq U(R)$ open and $\Delta$ is open, there exists an open $W\subseteq U(R)$ containing $y$ such that $(W\times W)\cap (U\times_X U)(R)\subseteq \Delta$.

  For any two $w,w'\in W$, if $\pi(w)=\pi(w')$, then $(w,w')\in U(R)\times_{X(R)}U(R)\subseteq U(R)\times U(R)$, but also $(w,w')\in (W\times W)\cap (U\times_X U)(R)\subseteq \Delta$, so $w=w'$, Thus the map $W\to \pi(U(R))$ is injective. We have already concluded that $W\to \pi(U(R))$ is open. Finally, as $U(R)\to \pi(U(R))$ is a quotient map, we can conclude that $W$ is an homeomorphism onto its image, concluding the proof.
\end{proof}

  \begin{prop}\label{prop:algopen}
    Let $X$ be an algebraic space over $R$, and let $\pi:Z\to X$ be a smooth cover by a scheme. Give $\pi(Z(R))$  the quotient topology from the topology of $Z(R)$. Then the map $Z(R)\to \pi(Z(R))$ is open.
\end{prop}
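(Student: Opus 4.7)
The plan is to reduce to the étale case handled in the preceding proposition by combining it with the local structure of smooth morphisms. First, it is convenient to prove the stronger statement that $\pi\colon Z(R)\to X(R)$ is itself an open map. Once this is established, the proposition follows: for any open $V\subseteq Z(R)$, since $\pi(V)$ will be open in $X(R)$ and $\pi\colon Z(R)\to X(R)$ is continuous (Proposition \ref{12y}), the preimage $\pi^{-1}(\pi(V))$ is open in $Z(R)$, which is exactly the condition for $\pi(V)\subseteq \pi(Z(R))$ to be open in the quotient topology.

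Next, I would invoke the standard local structure of smooth morphisms: every point of $Z$ admits a Zariski open neighborhood $U$ on which $\pi|_U$ factors as $U\xrightarrow{g}\A^n_X\xrightarrow{p}X$ with $g$ étale. (The usual construction, via a choice of local generators for $\Omega^1_{Z/X}$ and the Jacobian criterion, works without change when $X$ is an algebraic space.) Covering $Z$ by such $U$, openness of $\pi\colon Z(R)\to X(R)$ reduces to showing that $g(R)\colon U(R)\to \A^n_X(R)$ and $p(R)\colon \A^n_X(R)\to X(R)$ are each open, using that Zariski opens give opens on $R$-points (property~(5) of the excellent topologization from Section~\ref{sec:pvar}).

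For $p(R)$, I would pick a smooth cover $Y\to X$ by a scheme with $Y(R)\to X(R)$ surjective (using Proposition~\ref{prop:suffconnDM}); then $\A^n_Y\to \A^n_X$ is similarly a smooth cover by a scheme surjective on $R$-points, so by Proposition~\ref{ref:126} the topology on $\A^n_X(R)$ is the quotient of $\A^n_Y(R)=R^n\times Y(R)$, while $X(R)$ is the quotient of $Y(R)$. A direct verification using the universal property of the quotient topology, together with the fact that the product projection $R^n\times Y(R)\to Y(R)$ is open, then shows that $p(R)$ is open.

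The main step is to prove $g(R)\colon U(R)\to \A^n_X(R)$ is open; the preceding proposition only gives openness onto the image equipped with the quotient topology, and this must be upgraded to openness into the ambient algebraic space. I would do this by forming the scheme $U':=U\times_{\A^n_X}\A^n_Y$, which comes with étale projections to both $U$ and $\A^n_Y$ (base changes of the étale maps $\A^n_Y\to \A^n_X$ and $g$). Given an open $W\subseteq U(R)$, its preimage in $U'(R)$ is open, and the image of this preimage in $\A^n_Y(R)$ is open, because étale morphisms of schemes over an essentially analytic ring induce local homeomorphisms on $R$-points. Identifying $R$-points of the fiber product with the set-theoretic fiber product, one checks that this image is precisely the preimage of $g(W)$ in $\A^n_Y(R)$, so $g(W)$ is open in $\A^n_X(R)$ by the quotient-topology description. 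The hard part is exactly this last identification and the bookkeeping needed to boost openness into an image with quotient topology (as supplied by the previous proposition) to openness into the ambient algebraic space.
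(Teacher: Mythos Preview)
Your argument is correct, but it takes a substantially longer route than the paper. The paper's proof is a two-line saturation argument: form the cartesian square with $Z\times_X Z$ and its two projections $\pi_1,\pi_2$ to $Z$; for an open $U\subseteq Z(R)$, the set $W=\pi_2(\pi_1^{-1}(U))$ is open (because $\pi_2$ is a smooth morphism of schemes, hence open on $R$-points over an essentially analytic ring), it is $\pi$-saturated, and $\pi(W)=\pi(U)$. That is the entire proof.

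By contrast, you first strengthen the target to openness into the ambient $X(R)$, then unwind the local structure of smooth morphisms into an \'etale map followed by an $\A^n$-projection, and treat each piece separately using Propositions~\ref{ref:126} and~\ref{prop:suffconnDM} together with a fiber-product chase. This is fine, and indeed it essentially establishes Proposition~\ref{prop:openalggg} (openness of smooth maps between algebraic spaces) in one shot rather than as a separate step; but for the statement at hand it is considerably heavier. Note also that your appeal to Propositions~\ref{12y} and~\ref{ref:126} presupposes that essentially analytic rings satisfy the smooth quotient property, which is true (via the same \'etale--plus--$\A^n$ factorization you use) but is not stated explicitly in the paper, so you would want to remark on it. The paper's argument avoids this by working entirely inside the quotient $\pi(Z(R))$ and never touching the colimit topology on $X(R)$.
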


\begin{proof}
  Consider the following diagram:
\[
    \begin{tikzcd}
      Z\times_X Z\ar{r}{\pi_1}\ar{d}{\pi_2}& Z\ar{d}{\pi}\\
      Z\ar{r}{\pi}& X,
    \end{tikzcd}
  \]

  which leads to the diagram

  \[
    \begin{tikzcd}
      (Z\times_X Z)(R)\ar{r}{\pi_1}\ar{d}{\pi_2}& Z(R)\ar{d}{\pi}\\
      Z(R)\ar{r}{\pi}& \pi(Z(R)).
    \end{tikzcd}
  \]

  Now let $U\subseteq Z(R)$ be open. We must show that $\pi(U)\subseteq \pi(Z(R))$ is open. Now $\pi_{1}^{-1}(U)$ is open as $\pi_1$ is continuous, and the fact that $\pi_2$ is smooth implies $\pi_2(\pi_1^{-1}(U))$ is open. Let $W= \pi_2(\pi_1^{-1}(U))$, which as the property that $\pi^{-1}(\pi(W))=W$. This implies $\pi(W)$ is open, but $\pi(W)=\pi(U)$, completing the proof.
\end{proof}

\begin{prop}
  Let $X$ be an algebraic space over $R$. Let $U$ and $V$ schemes with smooth covers $\pi_U:U\to X$ and $\pi_V:V\to X$, and a $f:U\to V$ be an $X$-morphism.

  Let $\pi_U(U(R))$ be the image of $U(R)$ in $X(R)$ with the quotient topology and similarly for $\pi_V(V(R))$. Then, inclusion $\pi_U(U(R))\to \pi_V(V(R))$ is a homeomorphism onto an open subset.\end{prop}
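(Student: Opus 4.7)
My plan is to exhibit both the quotient topology on $\pi_U(U(R))$ coming from $U(R)$ and the subspace topology inherited from $\pi_V(V(R))$ as quotients of the $R$-points of a single scheme by the same equivalence relation, which forces them to coincide. Throughout I will use that smooth morphisms of finite-type schemes over an essentially analytic ring induce open maps on $R$-points (\'etale maps are local homeomorphisms by Definition \ref{def:essan}, and smooth maps factor \'etale-locally over a projection from affine space, which is open).

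First I form $W=U\times_X V$, an algebraic space over $R$ with $W(R)=U(R)\times_{X(R)}V(R)$ at the level of sets. The morphism $f$ yields a section $s=(\id_U,f)\colon U\to W$ of the projection $p_1\colon W\to U$, so $W(R)\to U(R)$ is surjective. Applying Proposition \ref{prop:suffconnDM} to the algebraic space $W$, I obtain a smooth cover $\widetilde W\to W$ by a scheme with $\widetilde W(R)\to W(R)$ surjective. Composing with $p_1$ and with $p_2\colon W\to V$ (the base change of $\pi_U$, still smooth) gives two smooth morphisms of schemes $\widetilde W\to U$ and $\widetilde W\to V$, both of which are continuous and open on $R$-points.

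The map $\widetilde W(R)\to U(R)$ is surjective via $s$, hence an open surjection and therefore a quotient. The image of $\widetilde W(R)\to V(R)$ coincides with the image of $W(R)\to V(R)$, which by the set-theoretic description of $W(R)$ equals $V'(R):=\pi_V^{-1}(\pi_U(U(R)))$. Openness of $\widetilde W(R)\to V(R)$ in $V(R)$ then shows $V'(R)$ is open in $V(R)$, and by the definition of the quotient topology on $\pi_V(V(R))$ this proves $\pi_U(U(R))$ is open in $\pi_V(V(R))$.

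It remains to identify the subspace topology on $\pi_U(U(R))\subseteq \pi_V(V(R))$ with the quotient topology it carries from $U(R)$. Restricting the quotient $V(R)\to \pi_V(V(R))$ to the preimage of the open set $\pi_U(U(R))$ yields a quotient map $V'(R)\to \pi_U(U(R))$. The map $\widetilde W(R)\to V'(R)$ is a continuous open surjection, hence also a quotient, so the composite $\widetilde W(R)\to V'(R)\to \pi_U(U(R))$ presents the subspace topology as the quotient of $\widetilde W(R)$ by the relation ``having the same image in $X(R)$''. The parallel composite $\widetilde W(R)\to U(R)\to \pi_U(U(R))$ is a quotient inducing the same equivalence relation, so the two topologies coincide and the inclusion is a homeomorphism onto its open image. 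The main subtlety I anticipate is checking that the smooth morphism $\widetilde W(R)\to V(R)$ really is open on $R$-points, since $\widetilde W$ only exists after invoking Proposition \ref{prop:suffconnDM}; once that is in place the rest is formal manipulation of quotient maps.
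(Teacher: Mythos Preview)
Your argument is correct. The one point worth making explicit is that $\widetilde W\to U$ and $\widetilde W\to V$ are genuinely smooth morphisms \emph{of schemes} (being composites of $\widetilde W\to W$ with the base changes $p_1,p_2$ of $\pi_V,\pi_U$), so the openness of their induced maps on $R$-points is exactly the scheme-level fact you sketch in your first paragraph; no circularity creeps in.

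The paper takes a different and somewhat slicker route. Rather than forming the fiber product $U\times_X V$ and invoking Proposition~\ref{prop:suffconnDM} to lift it to a scheme, the paper first treats the special case $V=U\sqcup W$: here $U(R)$ is a clopen piece of $V(R)$, and Proposition~\ref{prop:algopen} (openness of $V(R)\to \pi_V(V(R))$) immediately shows that the composite $U(R)\hookrightarrow V(R)\to \pi_V(V(R))$ is open onto $\pi_U(U(R))$, which forces the quotient and subspace topologies to match. The general case then follows by comparing both $\pi_U(U(R))$ and $\pi_V(V(R))$ to $d((U\sqcup V)(R))$ via two instances of the special case. This disjoint-union trick keeps everything at the level of schemes already in hand and avoids Proposition~\ref{prop:suffconnDM} entirely. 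Your approach, by contrast, is more direct in that it handles an arbitrary $X$-morphism $f$ in one pass, at the cost of manufacturing the auxiliary scheme $\widetilde W$; it also makes the underlying mechanism (two presentations of the same quotient of $\widetilde W(R)$) completely transparent.
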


\begin{proof}
  We will first consider the case when $V=U\sqcup W$ where $W\to X $ is also a smooth cover. The continuous injection $\pi_U(U(R))\to \pi_V(V(R))$ is open by Proposition \ref{prop:algopen}.  Therefore since open continuous injection are homeomorphisms onto open subsets, $\pi_U(U(R))\to \pi_V(V(R))$ is homeomorphism onto an open subset.

  Now consider the general case. Let the map $U\sqcup V\to X$ be denoted by $d$. The last case says that the topologies defined on $\pi_U(U(R))$ and $\pi_V(V(R))$ map are open in that defined by $d((U \sqcup V)(R))$. Thus $\pi_U(U(R))\to \pi_V(V(R))$ must be a homeomorphism onto an open subset.
\end{proof}

\begin{prop}\label{prop:openalggg}
  Let $X$ and $Y$ be two algebraic spaces over $R$ and $f:X\to Y$ be a smooth map. Then $X(R)\to Y(R)$ is open.
\end{prop}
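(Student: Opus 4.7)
The plan is to reduce the statement to the known openness of smooth morphisms of $R$-schemes on $R$-points. Fix an open $W \subseteq X(R)$; the goal is to show that $f(W)$ is open in $Y(R)$. First, by Proposition~\ref{prop:suffconnDM}, I choose smooth covers $u\colon U \to X$ and $g\colon V \to Y$ by $R$-schemes with $U(R)\to X(R)$ and $V(R)\to Y(R)$ surjective. Proposition~\ref{ref:126} then says that both $u$ and $g$ are quotient maps on $R$-points; in particular it suffices to show that the preimage $g^{-1}(f(W))\subseteq V(R)$ is open.

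Next, I would form the fibered product $A := V\times_Y X$, an algebraic space over $R$, with projections $p_V\colon A\to V$ (smooth, as a base change of $f$) and $p_X\colon A\to X$ (smooth, as a base change of $g$). The sheaf property applied to $\Spec R$ gives $A(R)=V(R)\times_{Y(R)}X(R)$ as sets, and a direct set-theoretic chase using this identification yields
\[
  g^{-1}(f(W)) = p_V\bigl(p_X^{-1}(W)\bigr).
\]

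To pass from the algebraic space $A$ to a scheme, I would choose by Proposition~\ref{prop:suffconnDM} a smooth cover $q\colon B\to A$ by a scheme with $B(R)\to A(R)$ surjective. Surjectivity of $q$ on $R$-points gives
\[
  p_V\bigl(p_X^{-1}(W)\bigr) = (p_V q)\bigl((p_X q)^{-1}(W)\bigr).
\]
The preimage $(p_X q)^{-1}(W)$ is open in $B(R)$ by continuity of the morphism $p_X q\colon B\to X$ from a scheme to an algebraic space (Proposition~\ref{prop:de}). Thus the problem reduces to showing that $p_V q\colon B\to V$, which is a smooth morphism of finite-type $R$-schemes, induces an open map $B(R)\to V(R)$.

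This last step is where the essentially analytic hypothesis is really used, and it is the main technical obstacle. Zariski-locally on the source, any smooth morphism of schemes factors as an \'etale morphism followed by a projection from a relative affine space; by Definition~\ref{def:essan}, the \'etale factor induces a local homeomorphism, hence an open map, on $R$-points, while the projection is the product projection $V(R)\times R^n\to V(R)$, which is automatically open. Combining these, $(p_V q)\bigl((p_X q)^{-1}(W)\bigr)$ is open in $V(R)$, hence $f(W)$ is open in $Y(R)$, which is what was to be proved.
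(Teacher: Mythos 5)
Your argument is correct, but it follows a genuinely different route from the paper's. The paper argues locally: starting from a point $x\in X(R)$ with image $y$, it chooses an \'etale cover $\pi\colon U\to Y$ with a lift $u$ of $y$, then an \'etale cover $V\to U\times_Y X$ with a lift of $(u,x)$, and checks directly that the image of $V(R)$ in $X(R)$ is an open neighborhood $W$ of $x$ on which $f$ is open, exploiting that $V\to X$ is \'etale (so $V(R)\to X(R)$ is a local homeomorphism) and $V\to U$ is a smooth map of schemes (so open on $R$-points), then composing with the local homeomorphism $U(R)\to Y(R)$. You instead give a global reduction: you pass to quotient maps $U(R)\twoheadrightarrow X(R)$, $V(R)\twoheadrightarrow Y(R)$ from smooth covers surjective on $R$-points, use the fibered product $A=V\times_Y X$ and the pointwise identity $A(R)=V(R)\times_{Y(R)}X(R)$ to rewrite $g^{-1}(f(W))$ as a saturation $p_V(p_X^{-1}(W))$, and then push up through a scheme cover $B\to A$ to reduce to the openness of a smooth map $B\to V$ of $R$-schemes. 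Both arguments ultimately invoke the same ingredient — that smooth morphisms of finite-type $R$-schemes are open on $R$-points when $R$ is essentially analytic — which you spell out explicitly via the Zariski-local factorization of a smooth morphism as \'etale followed by $\A^n$-projection, whereas the paper invokes it without comment. Your version has the advantage of isolating that scheme-level fact cleanly as the single input; the paper's version is more economical in that it only invokes \'etale covers of $Y$ and $U\times_Y X$ (rather than arbitrary smooth covers of $X$, $Y$, and $A$) and never forms the auxiliary saturation. One small inessential point: in your reduction to showing $g^{-1}(f(W))$ is open, only the quotient property of $g$ is needed; invoking it for $u$ as well is harmless but unused.
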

\begin{proof}
  We will show that for any $x\in X(R)$, there is an open subset $W\subseteq X(R)$ containing $x$ such that the map $W\to Y(R)$ is open. This is clearly sufficient.

  Let $y=f(x)\in Y(R)$. Let $U$ be a scheme with an \'{e}tale cover $\pi:U\to Y$ such that $y\in \pi(U(R))$. Let $u\in U(R)$ be such that $\pi(u)=y$. Also, let $V$ be a scheme with an \'{e}tale cover $p:V\to U\times_Y X$ such that $(u,x)\in p(V(R))$. Now the image of $V(R)$ in $X$ is open and contains $x$; call this open $W$. Additionally, $V\to U$ is smooth, so the image of $V(R)$ in $U(R)$ is open. As $U\to Y$ is \'{e}tale, $U(R)\to Y(R)$ is a local homeomorphism, the image of the composite $V(R)\to U(R)\to Y(R)$ is open. The image of this composite is the same as $f(W)$ and additionally contains $y$ as $W$ contains $x$. Thus $W$ is the desired open, completing the proof. 
\end{proof}

\begin{prop}\label{lochom}
  Let $X\to Y$ be an \'{e}tale map of algebraic spaces over $R$. Then $X(R)\to Y(R)$ is a local homeomorphism.
\end{prop}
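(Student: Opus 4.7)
The plan is to reduce the statement to the known fact that étale maps of schemes induce local homeomorphisms on $R$-points, which holds by essential analyticity of $R$. The key is to work through a common étale scheme cover of $X$ whose composition to $Y$ remains étale.

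Fix $x\in X(R)$ and write $y=f(x)\in Y(R)$. First I would choose an étale cover $\pi\colon V\to X$ by a scheme together with a point $v\in V(R)$ lifting $x$; such a cover exists by the cocycle-space construction underlying Proposition \ref{prop:suffconnDM}, applied to an étale atlas of the algebraic space $X$. This construction preserves étaleness, since by Propositions \ref{prop:scheme} and \ref{prop:smooth} the cocycle space is built from restrictions of scalars along a finite étale cover (which preserve étaleness) together with pullbacks. The composite $f\circ\pi\colon V\to Y$ is then an étale map from a scheme to an algebraic space.

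Next I would apply the first proposition of Section \ref{sec:algspace} separately to $\pi$ and to $f\circ\pi$, obtaining that $V(R)\to\pi(V(R))$ and $V(R)\to(f\circ\pi)(V(R))$ are local homeomorphisms onto their images (taken with the quotient topology). Proposition \ref{prop:openalggg} ensures these images are open in $X(R)$ and $Y(R)$ respectively, and the openness of $V(R)\to X(R)$ identifies the quotient topology on these images with the subspace topology inherited from $X(R)$ and $Y(R)$. I would then choose an open neighborhood $W\subseteq V(R)$ of $v$ small enough that $\pi|_W$ is a homeomorphism onto an open subset $W_X\subseteq X(R)$ containing $x$ and simultaneously $(f\circ\pi)|_W$ is a homeomorphism onto an open subset $W_Y\subseteq Y(R)$ containing $y$; intersecting two local-homeomorphism neighborhoods and invoking openness of $V(R)\to X(R)$ to keep images open suffices. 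The restriction of $f$ to $W_X$ is then identified with the composite homeomorphism $W_X\xrightarrow{\sim}W\xrightarrow{\sim}W_Y$, so $f$ is a local homeomorphism at $x$; since $x$ was arbitrary, the conclusion follows.

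The main obstacle is arranging an étale scheme cover of $X$ that carries a lift of $x$ while remaining étale after composition to $Y$; once that lift is in hand the remainder is a comparison of local-homeomorphism neighborhoods using results already established in this section.
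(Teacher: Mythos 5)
Your proof is correct and follows essentially the same strategy as the paper's: lift $x$ to an étale scheme cover $U\to X$, observe that the composite $U\to Y$ is also étale, and then invoke the local-homeomorphism result for étale scheme maps to algebraic spaces (the first proposition of Section \ref{sec:algspace}) on both $U\to X$ and $U\to Y$. You supply rather more detail than the paper — in particular on why the cocycle-space construction yields an étale (not merely smooth) lifting cover, and on matching the quotient topology on $\pi(V(R))$ with the subspace topology inside $X(R)$ — but the core argument is identical.
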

\begin{proof}
  Let $x\in X(R)$ and let $U\to X$ be an \'{e}tale map such that there is a point $u\in U(R)$ mapping to $x$. Then $U\to X$ and $U\to Y$ are \'{e}tale, so $U(R)\to X(R)$ and $U(R)\to Y(R)$ are local homeomorphisms, so $X(R)\to Y(R)$ is a local homeomorphism when restricted to the image of $U(R)$. This is an open subset and contains $x$, so $X(R)\to Y(R)$ is a local homeomorphism at $x$. As $x$ was arbitrary this completes the proof.
\end{proof}

\begin{prop}\label{prop:hausss}
  Let $X$ be a separated algebraic space over $R$. Then the topological space $X(R)$ is  Hausdorff.
\end{prop}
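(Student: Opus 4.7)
The plan is to exploit the fact that a separated algebraic space has a closed diagonal and to lift everything to a single smooth cover by a scheme, where the topology is completely explicit in terms of the ring $R$.

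Given distinct points $x_1, x_2 \in X(R)$, I would first apply Proposition~\ref{prop:suffconnDM} (the algebraic space case, or viewing $X$ as a Deligne--Mumford stack) to produce a smooth cover $\pi\colon U \to X$ by a scheme such that $U(R) \to X(R)$ is surjective, and choose lifts $u_i \in U(R)$ of $x_i$. By Proposition~\ref{prop:openalggg}, the map $\pi\colon U(R) \to X(R)$ is open, so any open neighborhoods $V_i \ni u_i$ in $U(R)$ yield open neighborhoods $\pi(V_i) \ni x_i$ in $X(R)$. It therefore suffices to produce $V_1, V_2$ whose images in $X(R)$ are disjoint.

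To do this, I would use the separatedness of $X$: the diagonal $\Delta_X\colon X \to X \times_R X$ is a closed immersion, and pulling back along $U \times_R U \to X \times_R X$ gives that $U \times_X U \hookrightarrow U \times_R U$ is a closed immersion of schemes. By properties (3) and (4) of the good topologization, $(U \times_X U)(R)$ is closed in $U(R) \times U(R) = (U \times_R U)(R)$. Crucially, because $X$ is a sheaf of sets rather than groupoids, this closed subset coincides set-theoretically with $U(R) \times_{X(R)} U(R)$. Since $\pi(u_1) = x_1 \ne x_2 = \pi(u_2)$, the pair $(u_1, u_2)$ lies outside this closed set, so I can find a basic product open $V_1 \times V_2$ containing $(u_1, u_2)$ and disjoint from $(U \times_X U)(R)$. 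Disjointness of $\pi(V_1)$ and $\pi(V_2)$ then falls out: a common image point $x$ would give $v_i \in V_i$ with $\pi(v_1) = \pi(v_2) = x$, producing $(v_1, v_2) \in (V_1 \times V_2) \cap (U \times_X U)(R)$, a contradiction.

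The main subtle point, and the one place where the hypothesis is used essentially, is the set-theoretic identification $(U \times_X U)(R) = U(R) \times_{X(R)} U(R)$; this is what lets separatedness translate cleanly into a Hausdorff-style separation property on $R$-points. For genuine stacks this identification fails (one only has a surjection whose fibers parametrize automorphisms), so the Hausdorff conclusion in the stacky setting later in the paper will require more than just separatedness.
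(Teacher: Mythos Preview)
Your argument is correct and follows the same underlying idea as the paper: pull back the closed diagonal along a smooth scheme cover, use property~(3) to get a closed subset of $R$-points, and use openness of smooth maps on $R$-points (Proposition~\ref{prop:openalggg}) to push down.

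The packaging differs. The paper covers $X\times_R X$ by a scheme $W$ (via Proposition~\ref{prop:suffconnDM}), pulls back $\Delta$ to a closed subscheme $Z\hookrightarrow W$, and argues that $\pi(\delta(Z(R)))=\Delta(X(R))\cap \pi(W(R))$ is closed in the open set $\pi(W(R))$, so the diagonal in $X(R)\times X(R)$ is locally (hence globally) closed. You instead fix a single cover $U\to X$ and work with the scheme $U\times_R U$; since $U\times_X U\hookrightarrow U\times_R U$ is a closed immersion of schemes you get closedness of $(U\times_X U)(R)$ in $U(R)\times U(R)$ directly from properties~(3) and~(4), and then separate two given points by hand. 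Your route is a bit more elementary: it avoids invoking the lifting result for the algebraic space $X\times_R X$ and sidesteps any tacit identification of $(X\times_R X)(R)$ with $X(R)\times X(R)$. The paper's route, on the other hand, is phrased so as to generalize verbatim to the stacky case in Proposition~\ref{prop:shaus}, where the diagonal is only proper and one really wants to argue with the image of the diagonal rather than with fiber products over $X$.
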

\begin{proof}
  To show that $X(R)$ is Hausdorff, we will show that the diagonal $\Delta(X(R))\subseteq X(R)\times X(R)$ is closed.

  Let $x,y\in X(R)$. We will show that in some open near $(x,y)\in X(R)\times X(R)$, the restriction of the diagonal map is a closed. By Proposition \ref{prop:suffconnDM}, we may find a cover $\pi:W\to X\times X$ by a scheme such that $(x,y)$ lifts to some $w\in W(R)$.

  Consider the pullback square
  \[
    \begin{tikzcd}
      Z\ar{r}{\delta}\ar{d}& W\ar{d}{\pi}\\
      X\ar{r}{\Delta}& X\times X.
    \end{tikzcd}
  \]

  By the separated hypothesis, $X\to X\times X$ is a closed immersion, so $Z\to W$ is. Therefore $\delta(Z(R))\subseteq W(R)$ is closed by Property $(3)$ of having a excellent topologization of $R$-points on finite type $R$-schemes.

  Additionally, $\pi^{-1}(\pi(\delta(Z(R))))=\delta(Z(R))$ as the above diagram gives a cartesian square of sets on taking $R$-points. As $\pi$ is open by Proposition \ref{prop:openalggg} we have that $\pi(W(R))$ is open. Additionally, as $\pi$ is open it is a quotient map onto its image. The definintion of the quotient topology gives that $\pi(\delta(Z(R))$ is closed in $\pi(W(R))$. But $\pi(\delta(Z(R)))=\Delta(X(R))\cap \pi(W(R))$. As $\pi(W(R))$ is an open containing $(x,y)$, $\Delta(X(R))$ is a closed near $(x,y)$. As $(x,y)$ is arbitrary, we conclude $\Delta(X(R))$ is immersion so $X$ is Hausdorff.
\end{proof}

\begin{prop}\label{prop:chow}
  In addition to being essentially analytic assume that $R$ is a local field or the valuation ring of a local field. Let $X\to Y$ be a proper morphism of separated alegbraic spaces over $R$. Then $X(R)\to Y(R)$ is a proper morphism of topological spaces.
\end{prop}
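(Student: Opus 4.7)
The plan is to show that the continuous map $X(R) \to Y(R)$ is proper in the sense that the preimage of every compact subset is compact. Since $X \to Y$ is proper and $Y$ is separated over $R$, the space $X$ is separated over $R$, and Proposition \ref{prop:hausss} gives that $X(R)$ is Hausdorff; together with preimage-compactness this implies properness as a map of topological spaces. The central geometric input is the compactness of $\mathbb{P}^n(R)$: for $R$ a non-archimedean local field this is classical, and for $R$ the valuation ring of such a field it follows from $\mathbb{P}^n(R) = \mathbb{P}^n(\Frac R)$ via the valuative criterion together with compactness of $R$; for archimedean $R$ it is likewise standard.

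First I would construct, by iterated application of Chow's lemma for algebraic spaces (Knutson, in its relative form over $Y$), a proper morphism $\widetilde X \to X$ from a scheme $\widetilde X$ that is a finite disjoint union of pieces, each projective over $Y$, and for which $\widetilde X(R) \to X(R)$ is surjective. Explicitly, Chow produces a projective morphism $\widetilde X_0 \to Y$ together with a proper birational surjection $\widetilde X_0 \to X$ that restricts to an isomorphism over some dense open $U_0 \subseteq X$. Setting $Z_0 = X \setminus U_0$ with its reduced closed-subspace structure, the composite $Z_0 \hookrightarrow X \to Y$ is still proper, so Chow applies again. Iterating and using Noetherian induction on dimension yields finitely many pieces $\widetilde X_0,\widetilde Z_0,\widetilde Z_1,\ldots$, each projective over $Y$, whose disjoint union will serve as $\widetilde X$.

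The $R$-point surjectivity of $\widetilde X(R) \to X(R)$, which I expect to be the delicate step, comes from the valuative criterion together with the same Noetherian induction. Given $x\colon \Spec R \to X$, I consider the image of the generic point of $\Spec R$ in $|X|$: if it lies in $U_0$, then $x|_{\Frac R}$ lifts uniquely to $\widetilde X_0(\Frac R)$ via the isomorphism over $U_0$, and this $\Frac R$-point extends to an $R$-point of $\widetilde X_0$ by applying the valuative criterion to the proper morphism $\widetilde X_0 \times_X \Spec R \to \Spec R$; otherwise the generic-point image lies in the closed subspace $Z_0$, so $x$ factors through $Z_0 \hookrightarrow X$ and a lift to $\widetilde Z_i(R)$ exists by the inductive hypothesis. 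When $R$ is a field the dichotomy is trivial because $\Spec R$ has only one point.

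With these ingredients in hand, the argument concludes rapidly. For any compact $K \subseteq Y(R)$, each projective piece is a closed subscheme of $\mathbb{P}^{n_i} \times Y$, so by properties $(3)$ and $(4)$ of the excellent topologization $\widetilde X_i(R)$ sits as a closed subset of $\mathbb{P}^{n_i}(R) \times Y(R)$; the preimage of $K$ inside $\widetilde X_i(R)$ is then a closed subset of the compact set $\mathbb{P}^{n_i}(R) \times K$, hence compact. Taking the finite union over $i$ gives compactness of the preimage of $K$ in $\widetilde X(R)$, and because $\widetilde X(R) \to X(R)$ is a continuous surjection, its image — which equals the preimage of $K$ in $X(R)$ — is compact in the Hausdorff space $X(R)$, as required.
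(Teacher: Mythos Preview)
Your approach is essentially the paper's: reduce via Chow's lemma for algebraic spaces to closed subspaces of $\P^n_Y$, then use compactness of $\P^n(R)$. Your iterated Chow plus Noetherian induction to secure surjectivity of $\widetilde X(R)\to X(R)$ is in fact more careful than the paper, which writes only ``since $X'\to X$ is surjective'' without justifying surjectivity on $R$-points; your valuative-criterion argument (lift over the dense open where the Chow map is an isomorphism, otherwise factor through the reduced closed complement) is exactly what is needed there.

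One small repair. In the last paragraph you invoke properties~(3) and~(4) of the excellent topologization to embed $\widetilde X_i(R)$ as a closed subset of $\P^{n_i}(R)\times Y(R)$, but those properties are stated only for schemes, and $\P^{n_i}\times_R Y$ need not be a scheme when $Y$ is merely an algebraic space. The paper handles this by choosing an \'etale cover $U\to Y$ by a scheme with $U(R)\to Y(R)$ a surjective local homeomorphism (Propositions~\ref{prop:suffconnDM} and~\ref{lochom}) and then working with the scheme morphism $\P^n_U\to U$. A cleaner fix in your setup is to run Chow over the base scheme $\Spec R$ rather than over $Y$: then each $\widetilde X_i$ is a genuine closed subscheme of $\P^{n_i}_R$, property~(3) applies directly, and the preimage of a compact $K\subseteq Y(R)$ in $\widetilde X_i(R)$ is closed in the compact space $\P^{n_i}(R)$ (as the intersection of $\widetilde X_i(R)$ with the continuous preimage of the closed set $K$), hence compact.
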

\begin{proof}
  Chow's lemma for algebraic spaces (\cite[\href{https://stacks.math.columbia.edu/tag/088P}{Tag 088P}]{stacks-project}) implies that there is an algebraic space $X'$ which is a closed sub-algebraic space of $\P^n_Y$ with a surjective map $X'\to X$. Now since $X'\to X$ is surjective, to prove $X\to Y$ is proper, it suffices to prove that $X'\to X$ and $X'\to Y$ are proper. Then $X'$ is also a closed sub-algebraic space of $\P^n_X$. It is thus also a closed subspace of $\P^n_Y$. It therefore suffices to prove that $\P^n_X\to X$ and $\P^n_Y\to Y$ induce proper maps on $R$-points. Thus to prove the proposition, it suffices to prove the statement: if $Y$ is an algebraic space and $X=\P^n_Y$, then $X(R)\to Y(R)$ is a proper map.

  Let $U\to Y$ be an \'etale cover by a scheme such that $U(R)\to Y(R)$ is surjective, which is possible by Proposition \ref{prop:suffconnDM}. By Proposition \ref{lochom}, $U(R)\to Y(R)$ is a local homeomorphism. So the conclusion for $\P^n_Y\to Y$ follows from that of $\P^n_U\to U$. For the $R$ as in the statemetn, $\P^n_U(R)\to U(R)$ is proper, establishing the proposition.
\end{proof}

\section{Algebraic stacks in the essentially analytic case}\label{sec:algstack}
In this this section $R$ is essentially analytic in addition to being sufficiently disconnected.

\begin{prop}\label{prop:12}
  Let $\Xf$ be an algebraic stack over $R$, and let $\pi:Z\to \Xf$ be a smooth cover by a scheme. Let $\pi(Z(R))$ denote the image of $Z(R)$ under $\pi$ viewed as a set. Topologize $\pi(Z(R))$ by giving it the quotient topology viewing $\pi(Z(R))$ as a quotient of $Z(R)$. Then the map $Z(R)\to \pi(Z(R))$ is open.
\end{prop}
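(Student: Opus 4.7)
The plan is to mimic the proof of Proposition~\ref{prop:algopen} (the algebraic space case), with the crucial observation that when $\Xf$ is a stack the fiber product $Z\times_\Xf Z$ is still an algebraic space (since $\pi\colon Z\to \Xf$ is representable by algebraic spaces, as $Z$ is a scheme). That reduces us to facts already proved in the algebraic space setting.

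Concretely, I would form the cartesian square
\[
  \begin{tikzcd}
    Z\times_\Xf Z \ar{r}{\pi_1}\ar{d}{\pi_2} & Z\ar{d}{\pi}\\
    Z \ar{r}{\pi} & \Xf
  \end{tikzcd}
\]
and the corresponding diagram on $R$-points
\[
  \begin{tikzcd}
    (Z\times_\Xf Z)(R) \ar{r}{\pi_1}\ar{d}{\pi_2} & Z(R)\ar{d}{\pi}\\
    Z(R) \ar{r}{\pi} & \pi(Z(R)).
  \end{tikzcd}
\]
Given an open $U\subseteq Z(R)$, it suffices by the definition of the quotient topology on $\pi(Z(R))$ to show that $\pi^{-1}(\pi(U))$ is open in $Z(R)$. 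The continuity of $\pi_1$ gives that $\pi_1^{-1}(U)\subseteq (Z\times_\Xf Z)(R)$ is open, and since $\pi_2$ is a smooth morphism of algebraic spaces, Proposition~\ref{prop:openalggg} implies that $\pi_2$ is open on $R$-points, so $\pi_2(\pi_1^{-1}(U))$ is open in $Z(R)$.

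The step that needs a small argument is the set-theoretic identity $\pi^{-1}(\pi(U)) = \pi_2(\pi_1^{-1}(U))$. Here one must unwind what $\pi(Z(R))$ means: two points $z, z'\in Z(R)$ have the same image in $\pi(Z(R))\subseteq \Xf(R)$ iff the corresponding $R$-points of $\Xf$ are isomorphic, and by the universal property of the fiber product this is exactly the condition that $(z, z')$ lies in the image of $(\pi_1, \pi_2)\colon (Z\times_\Xf Z)(R)\to Z(R)\times Z(R)$. From this the equality of subsets is immediate. This is the only real ``stacky'' point in the argument; it is where the isomorphism-class definition of $\Xf(R)$ interacts with the proof, and it is the step most likely to require care to state cleanly. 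Once it is in hand, the openness of $\pi_2(\pi_1^{-1}(U)) = \pi^{-1}(\pi(U))$ shows that $\pi(U)$ is open in $\pi(Z(R))$, completing the proof.
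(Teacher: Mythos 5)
Your proof is correct and follows essentially the same route as the paper: the same cartesian square, the same diagram on $R$-points, and the same open set $W = \pi_2(\pi_1^{-1}(U))$. One point in your favor: where the paper appears to mis-cite Proposition~\ref{prop:algopen} (whose hypotheses require the source to be a scheme, while $Z\times_\Xf Z$ is only an algebraic space), you correctly invoke Proposition~\ref{prop:openalggg}, which is what is actually needed to conclude that $\pi_2$ is open on $R$-points. You also take more care than the paper does in justifying the set-theoretic identity $\pi^{-1}(\pi(U)) = \pi_2(\pi_1^{-1}(U))$; the paper asserts $\pi^{-1}(\pi(W)) = W$ essentially without comment, whereas you correctly trace it to the fact that $(Z\times_\Xf Z)(R)$ maps onto the set of pairs of $R$-points of $Z$ whose images in $\Xf$ are isomorphic, which is exactly the relation defining the identification in $\Xf(R)$.
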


\begin{proof}
  The diagram:
\[
    \begin{tikzcd}
      Z\times_\Xf Z\ar{r}{\pi_1}\ar{d}{\pi_2}& Z\ar{d}{\pi}\\
      Z\ar{r}{\pi}& \Xf
    \end{tikzcd}
  \]
  leads to the diagram of topological spaces:
  \[
    \begin{tikzcd}
      (Z\times_\Xf Z)(R)\ar{r}{\pi_1}\ar{d}{\pi_2}& Z(R)\ar{d}{\pi}\\
      Z(R)\ar{r}{\pi}& \pi(Z(R)).
    \end{tikzcd}
  \]

  Let $U\subseteq Z(R)$ be open. We must show that $\pi(U)\subseteq \pi(Z(R))$ is open. Now $\pi_{1}^{-1}(U)$ is open as $\pi_1$ is continuous, and the fact that $\pi_2$ is smooth implies $\pi_2(\pi_1^{-1}(U))$ is open by Proposition \ref{prop:algopen}. Let $W= \pi_2(\pi_1^{-1}(U))$, which as the property that $\pi^{-1}(\pi(W))=W$. This implies $\pi(W)$ is open, but $\pi(W)=\pi(U)$, completing the proof.
\end{proof}

\begin{prop}
   Let $\Xf$ be an algebraic stack over $R$. Let $\pi_X:X\to \Xf$ and $\pi_Y:Y\to \Xf$ be smooth covers by schemes $X$ and $Y$, with a map $f:X\to Y$ such that $\pi_X\cong \pi_Y\circ f$. Let $\pi_X(X(R))$ denote the image of $X(R)$ in $\Xf(R)$ (as a set) topologized with the quotient topology. Similarly, define $\pi_Y(Y(R))$. Then the inclusion $\pi_X(X(R))\to \pi_Y(Y(R))$ is a homeomorphism onto an open subset.
\end{prop}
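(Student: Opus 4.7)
The plan is to mimic, almost verbatim, the algebraic-space analogue already proved in Section~\ref{sec:algspace}, substituting the newly available Proposition~\ref{prop:12} for its algebraic-space counterpart Proposition~\ref{prop:algopen}. I would split the argument into two cases exactly as in the algebraic-space version.

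First I would handle the special case in which $Y = X \sqcup W$ for some smooth cover $W\to\Xf$ by a scheme, with $f$ the canonical inclusion $X\hookrightarrow X\sqcup W$. In this setup $Y\to\Xf$ is automatically a smooth cover. The inclusion $\pi_X(X(R))\to\pi_Y(Y(R))$ is continuous by the universal property of the quotient topology on $\pi_X(X(R))$: the continuous map $X(R)\hookrightarrow Y(R)\to\pi_Y(Y(R))$ descends through $X(R)\to\pi_X(X(R))$. To show the inclusion is open, I would pick an open $O\subseteq\pi_X(X(R))$, observe that $\pi_X^{-1}(O)$ is open in $X(R)$ by definition of the quotient topology, and then use that $X(R)$ is clopen in $Y(R) = X(R)\sqcup W(R)$ to conclude $\pi_X^{-1}(O)$ is open in $Y(R)$. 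Proposition~\ref{prop:12} then gives that $\pi_Y\!\left(\pi_X^{-1}(O)\right) = O$ is open in $\pi_Y(Y(R))$. An open continuous injection is a homeomorphism onto its (necessarily open) image, finishing this case.

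For the general case I would introduce the smooth cover $d\colon X\sqcup Y\to\Xf$ and apply the previous case to each of the two canonical inclusions $X\hookrightarrow X\sqcup Y$ and $Y\hookrightarrow X\sqcup Y$. This realises both $\pi_X(X(R))$ and $\pi_Y(Y(R))$ as open subspaces of $d((X\sqcup Y)(R))$, each homeomorphically onto its image. Since $\pi_X = \pi_Y\circ f$ forces $\pi_X(X(R))\subseteq\pi_Y(Y(R))$ and since $\pi_Y(Y(R))$ has the subspace topology from $d((X\sqcup Y)(R))$, the inclusion $\pi_X(X(R))\to\pi_Y(Y(R))$ is a homeomorphism onto an open subset, as required.

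I do not expect a serious obstacle. The main subtlety, matching the three potentially different quotient topologies arising from the covers $X\to\Xf$, $Y\to\Xf$, and $X\sqcup Y\to\Xf$, is handled uniformly once Proposition~\ref{prop:12} is available; that proposition is where the genuine work (using the smooth-quotient property and openness on $R$-points of smooth morphisms of schemes) has already been absorbed.
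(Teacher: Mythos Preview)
Your proposal is correct and follows essentially the same approach as the paper's own proof: first treat the disjoint-union case $Y=X\sqcup W$ via Proposition~\ref{prop:12}, then reduce the general case by embedding both $X$ and $Y$ into $X\sqcup Y$. Your write-up simply spells out a few steps (continuity via the quotient universal property, the chain $\pi_Y(\pi_X^{-1}(O))=O$) that the paper leaves implicit.
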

\begin{proof}
 We will first consider the case when $Y=X\sqcup Z$ where $Z\to \Xf $ is also a smooth cover. The continuous injection $\pi_X(U(R))\to \pi_Y(V(R))$ is open by Proposition \ref{prop:12}.  Therefore since open continuous injection are homeomorphisms onto open subsets, $\pi_X(X(R))\to \pi_Y(Y(R))$ is homeomorphism onto an open subset.

  Now consider the general case. Let the map $X\sqcup Y\to X$ be denoted by $d$. The last case says that the topologies defined on $\pi_X(X(R))$ and $\pi_Y(Y(R))$ are homeomorphisms onto open subsets of $d((U \sqcup V)(R))$. Thus $\pi_X(X(R))\to \pi_Y(Y(R))$ must be a homeomorphism onto an open subset.

\end{proof}

\begin{prop}
  Let $\Xf$ be an algebraic stack over $R$. Let $\pi:Z\to \Xf$ be a smooth cover by a scheme. Let $\Xf(R)$ be given the topology as in Definition \ref{def:top}. Then $Z(R)\to \Xf(R)$ is open
\end{prop}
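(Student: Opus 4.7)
The plan is to reduce the statement to a fiber product of algebraic spaces, where the corresponding openness statement (Proposition \ref{prop:openalggg}) is already available. Let $U\subseteq Z(R)$ be open; I want $\pi(U)\subseteq \Xf(R)$ to be open. By Definition \ref{def:top}, the topology on $\Xf(R)$ is the final topology for the collection of quotient maps $Z'(R)\twoheadrightarrow \Xf(R)_{Z'}\to \Xf(R)$ as $Z'\to \Xf$ ranges over smooth covers by schemes. Hence a subset $V\subseteq \Xf(R)$ is open if and only if, for every smooth cover $\pi':Z'\to \Xf$ by a scheme, $(\pi')^{-1}(V)\subseteq Z'(R)$ is open. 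So it suffices to fix an arbitrary such $Z'\to \Xf$ and show that $(\pi')^{-1}(\pi(U))\subseteq Z'(R)$ is open.

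Form the fiber product $W:=Z\times_{\Xf}Z'$, which is an algebraic space over $R$, together with its two projections $p:W\to Z$ and $p':W\to Z'$. Since smoothness is stable under base change, both $p$ and $p'$ are smooth morphisms of algebraic spaces. Proposition \ref{12y} gives that $p$ induces a continuous map $W(R)\to Z(R)$, so $\tilde U:=p^{-1}(U)\subseteq W(R)$ is open. Proposition \ref{prop:openalggg} applied to $p'$ then shows that $p'(\tilde U)\subseteq Z'(R)$ is open.

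It remains to identify $p'(\tilde U)$ with $(\pi')^{-1}(\pi(U))$. Points of $W(R)$ are triples $(z,z',\alpha)$ with $z\in Z(R)$, $z'\in Z'(R)$, and $\alpha$ an isomorphism of the two induced morphisms $\Spec R\to \Xf$; then $\tilde U$ consists of those triples with $z\in U$, and its $p'$-image is exactly the set of $z'\in Z'(R)$ for which $\pi'(z')=\pi(z)$ in $\Xf(R)$ for some $z\in U$, i.e.\ $(\pi')^{-1}(\pi(U))$. This completes the argument. The only mildly delicate step is the reformulation in the first paragraph (passing from openness in $\Xf(R)$ to openness of preimages in every smooth cover); everything after that reduces to invoking the algebraic-space results already established.
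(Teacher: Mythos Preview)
Your proof is correct. The reformulation in the first paragraph is exactly right: since $\Xf(R)$ carries the colimit topology over the $\Xf(R)_{Z'}$, and each $\Xf(R)_{Z'}$ carries the quotient topology from $Z'(R)$, a subset of $\Xf(R)$ is open precisely when its preimage in every $Z'(R)$ is open. The fiber-product computation with $W=Z\times_\Xf Z'$ then cleanly reduces everything to Proposition~\ref{prop:openalggg}.

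The paper takes a slightly different route. It first proves (the proposition immediately preceding this one) that whenever there is a morphism of smooth covers $X\to Y$ over $\Xf$, the induced inclusion $\pi_X(X(R))\hookrightarrow \pi_Y(Y(R))$ is an open embedding; since by Remark~\ref{rem:filtered} the colimit may be taken filtered, this shows every structure map $\Xf(R)_Z\to \Xf(R)$ is an open embedding, and then openness of $Z(R)\to\Xf(R)$ follows from Proposition~\ref{prop:12}. Your argument bypasses that intermediate proposition entirely: you run the same $Z\times_\Xf Z$ computation that underlies Proposition~\ref{prop:12}, but with the second factor replaced by an arbitrary cover $Z'$, and feed the result directly into the colimit criterion. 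This is more economical---you get the result in one step rather than two---whereas the paper's approach has the side benefit of establishing the open-embedding structure of the colimit diagram, which is of independent interest.
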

\begin{proof}
  This follows immediately from the previous proposition.
\end{proof}

\begin{thm}\label{thm:open}
  Let $f:\Xf\to \Yf$ be a smooth morphism of algebraic stacks over $R$. Then the induced map $f:\Xf(R)\to \Yf(R)$ is open.
\end{thm}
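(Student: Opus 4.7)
The plan is to reduce to the case where the source of the smooth map is a scheme, and then use a fiber product with a smooth cover of the target to transfer the openness question to a smooth morphism of algebraic spaces, where Proposition \ref{prop:openalggg} already handles it. By Proposition \ref{prop:suffconnDM}, I would first choose a smooth cover $\pi\colon Z\to \Xf$ by a scheme with $Z(R)\to \Xf(R)$ surjective. For any open $U\subseteq \Xf(R)$, the preimage $U'=\pi^{-1}(U)$ is open in $Z(R)$, and by surjectivity of $\pi$ on $R$-points, $f(U)=(f\circ\pi)(U')$. Setting $g=f\circ \pi\colon Z\to \Yf$ (a smooth morphism), it thus suffices to show: for any smooth morphism $g\colon Z\to \Yf$ from a scheme $Z$ to an algebraic stack $\Yf$, the image of every open subset of $Z(R)$ under $g$ is open in $\Yf(R)$.

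For this reduced statement, I would choose a smooth cover $\sigma\colon Y\to \Yf$ by a scheme with $Y(R)\to \Yf(R)$ surjective, again by Proposition \ref{prop:suffconnDM}; Proposition \ref{quoquo} then makes $\sigma\colon Y(R)\to \Yf(R)$ a quotient map. Form the fiber product $W=Z\times_{\Yf}Y$, an algebraic space since $\Yf$ has representable diagonal, and denote its two projections by $p_Z\colon W\to Z$ and $p_Y\colon W\to Y$. The projection $p_Y$ is a pullback of the smooth morphism $g$, hence smooth, so Proposition \ref{prop:openalggg} gives that $p_Y\colon W(R)\to Y(R)$ is open. The crux is the set-theoretic identity $\sigma^{-1}(g(U'))=p_Y(p_Z^{-1}(U'))$ for any $U'\subseteq Z(R)$: a point $y\in Y(R)$ lies in the left-hand side exactly when $\sigma(y)$ and $g(z)$ represent the same class in $\Yf(R)$ for some $z\in U'$, and by definition of the $2$-fiber product this is the same as the existence of an $R$-point $(z,y,\alpha)\in W(R)$ with $z\in U'$. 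Continuity of $p_Z$ and openness of $p_Y$ then make the right-hand side open in $Y(R)$, and the quotient property of $\sigma$ upgrades this to openness of $g(U')$ in $\Yf(R)$.

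The main subtlety is the groupoid bookkeeping on $W(R)$: one must use carefully that an $R$-point of the algebraic space $W$ packages a point of $Z(R)$, a point of $Y(R)$, and an isomorphism between their images in $\Yf(R)$, so that projecting correctly recovers exactly the set $\sigma^{-1}(g(U'))$. Once this identification is in place, everything else is a direct application of Propositions \ref{prop:suffconnDM}, \ref{quoquo}, and \ref{prop:openalggg}.
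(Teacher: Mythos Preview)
Your approach is sound and takes a somewhat different route from the paper's. The paper argues point by point: given $x\in\Xf(R)$ it constructs compatible smooth covers $\pi:U\to\Xf$ and $\rho:V\to\Yf$ by schemes together with a smooth morphism $g:U\to V$ lying over $f$ such that $x$ lifts to $U(R)$, and then uses that $U(R)\to V(R)$ is open (a smooth morphism of schemes) and that $V(R)\to\Yf(R)$ is open (the proposition immediately preceding Theorem~\ref{thm:open}). You instead avoid building the compatible map $U\to V$ altogether by passing to the fiber product $W=Z\times_{\Yf}Y$ and combining openness of $p_Y$ on $R$-points (Proposition~\ref{prop:openalggg}) with the quotient property of $\sigma$ and the set identity $\sigma^{-1}(g(U'))=p_Y(p_Z^{-1}(U'))$. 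This is a clean alternative, and your groupoid bookkeeping for that identity is exactly right.

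One point needs adjustment. Proposition~\ref{prop:suffconnDM} only provides a \emph{single} smooth cover by a scheme with surjective $R$-points when the stack is Deligne--Mumford; for a general algebraic stack it yields a family $\{Z_N\}_{N\ge 1}$ with $\bigcup_N\pi_N(Z_N(R))=\Xf(R)$, and the same remark applies to $\Yf$. So as written, your invocations of Proposition~\ref{prop:suffconnDM} (and hence of Proposition~\ref{quoquo} for $\sigma$) are not justified in full generality. The fix is painless: each $\pi_N(Z_N(R))$ is open in $\Xf(R)$ by the proposition just before Theorem~\ref{thm:open}, so one may run your fiber-product argument with $Z=Z_N$ for each $N$ separately; on the target side, each $\sigma_M:Y_M(R)\to\Yf(R)$ is an open map and a quotient onto its open image $\sigma_M(Y_M(R))$, and these images cover $\Yf(R)$, so the identity $\sigma_M^{-1}(g(U'))=p_{Y_M}(p_{Z_N}^{-1}(U'))$ shows $g(U')\cap\sigma_M(Y_M(R))$ is open for every $M$. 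With that refinement your proof goes through unchanged.
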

\begin{proof}

  Let $x\in \Xf(R)$. We can choose smooth covers $\pi:U\to \Xf$ to $\rho:V\to \Yf$ by schemes such that there exists $u\in U(R)$ mapping to $x$ and such that there is a smooth map $g:U\to V$ making the following diagram commute:

  \[
    \begin{tikzcd}
      U \ar{r}{g}\ar{d}{\pi} & V\ar{d}{\rho}\\
      \Xf\ar{r}{f} & \Yf.
    \end{tikzcd}
  \]

  The morphism $U\to \Yf\times_{\Xf}V$ is smooth, but $\Yf\times_{\Xf}V\to V$ is also smooth, so $U\to V$ is smooth thus $U(R)\to V(R)$ is an open map, thus a quotient onto its image. The sets $\pi(U(R))$ and $\rho(V(R))$ have the topologies as quotients of $U(R)$ and $V(R)$.

  Now take an open $W\subseteq \Xf(R)$ containing $u$. We may shrink $W$ so that $W\subseteq \pi(U(R))$. Let $W'=\pi^{-1}(W)\subseteq U(R))$. As we have seen all maps are open except maybe the bottom one, $\rho(g(W'))=f(W)$ is open. Thus $f$ is open.  
\end{proof}

\begin{prop}\label{prop:shaus}
  Let $R$ be a local field or the valuation ring of a local field. Let $\Xf$ be a separated algebraic stack over $R$. Then $\X(R)$ is a Hausdorff topological space.
\end{prop}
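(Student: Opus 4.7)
The plan is to adapt the argument of Proposition \ref{prop:hausss} from algebraic spaces to stacks, with the essential change being that for a separated algebraic stack the diagonal $\Delta : \Xf \to \Xf \times_R \Xf$ is proper and representable rather than a closed immersion. I will show $\Xf(R)$ is Hausdorff by proving that the diagonal $\Delta(\Xf(R)) \subseteq \Xf(R) \times \Xf(R)$ is closed near every point.

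Fix a pair $(x,y) \in \Xf(R) \times \Xf(R)$. By Proposition \ref{prop:suffconnDM} applied to the stack $\Xf \times_R \Xf$, there is a smooth cover $\pi : W \to \Xf \times_R \Xf$ by an $R$-scheme $W$ such that $(x,y)$ lifts to some $w \in W(R)$. Form the cartesian square
\[
\begin{tikzcd}
Z \ar{r}{\delta}\ar{d} & W \ar{d}{\pi}\\
\Xf \ar{r}{\Delta} & \Xf \times_R \Xf.
\end{tikzcd}
\]
Because $\Xf$ is separated, $\Delta$ is representable and proper, so $Z$ is a separated algebraic space over $R$ and $\delta : Z \to W$ is proper. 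Proposition \ref{prop:chow} then yields that $\delta : Z(R) \to W(R)$ is a proper map of topological spaces; in particular its image $\delta(Z(R))$ is closed in $W(R)$.

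Taking $R$-points of the cartesian square, $\delta(Z(R))$ is exactly the set of $w' \in W(R)$ for which the pair $\pi(w') = (x',y') \in \Xf(R) \times \Xf(R)$ satisfies $x' \cong y'$, i.e.\ $\delta(Z(R)) = \pi^{-1}(\Delta(\Xf(R)))$. By Theorem \ref{thm:open} the smooth cover $\pi$ induces an open map on $R$-points, so $\pi(W(R))$ is an open neighborhood of $(x,y)$ in $\Xf(R) \times \Xf(R)$ and $\pi$ is a quotient onto this image. Since $\delta(Z(R))$ is $\pi$-saturated and closed in $W(R)$, its image $\pi(\delta(Z(R))) = \Delta(\Xf(R)) \cap \pi(W(R))$ is closed in the open set $\pi(W(R))$. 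Thus $\Delta(\Xf(R))$ is closed near $(x,y)$, and as $(x,y)$ was arbitrary, $\Delta(\Xf(R))$ is closed.

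The main obstacle relative to the algebraic space case is the step from ``closed immersion'' to ``proper'': we must invoke Proposition \ref{prop:chow} (and hence the hypothesis that $R$ is a local field or the valuation ring of one) to conclude that properness of $\delta$ on the algebraic side descends to a proper map, with closed image, on $R$-points. A secondary bookkeeping point is the identification $\delta(Z(R)) = \pi^{-1}(\Delta(\Xf(R)))$, which is immediate from the $2$-fiber product description once one remembers that $R$-points of the stack are taken up to isomorphism.
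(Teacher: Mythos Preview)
Your proof is correct and follows essentially the same route as the paper's: pick a smooth scheme cover $W\to\Xf\times_R\Xf$ through which $(x,y)$ lifts, pull back the diagonal to get a proper map $\delta:Z\to W$ of algebraic spaces, use Proposition~\ref{prop:chow} to make $\delta(Z(R))$ closed in $W(R)$, and push this down via the open map $\pi$ to conclude the diagonal is locally closed. The only cosmetic difference is that you invoke Proposition~\ref{prop:suffconnDM} to produce $W$ where the paper cites Theorem~\ref{thm:lift}; either reference suffices.
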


\begin{proof}
  The proof is very similar to the proof of Proposition \ref{prop:hausss}
  
  To show that $\Xf(R)$ is Hausdorff, we will show that the diagonal $\Xf(R)\subseteq \Xf(R)\times \Xf(R)$ is closed.

  Let $x,y\in \Xf(R)$. We will show that in some open near $(x,y)\in \Xf(R)\times \Xf(R)$, the restriction of the diagonal map is a closed immersion. By Theorem \ref{thm:lift}, we may find a smooth cover $\pi:W\to \Xf\times \Xf$ by a scheme such that $(x,y)$ lifts to some $w\in W(R)$.

  Consider the pullback square
  \[
    \begin{tikzcd}
      Z\ar{r}{\delta}\ar{d}& W\ar{d}{\pi}\\
      \Xf\ar{r}{\Delta}& \Xf\times \Xf.
    \end{tikzcd}
  \]

  By the separated hypothesis, $\Xf\to \Xf\times \Xf$ is a proper  morphism, so $Z\to W$ is. Therefore $\delta(Z(R))\to W(R)$ is a proper map of topological spaces by Proposition \ref{prop:chow}. 

  Additionally, it is easily checked $\pi^{-1}(\pi(\delta(Z(R))))=\delta(Z(R))$. As $\pi$ is open by Theorem \ref{thm:open} we have that $\pi(W(R))$ is open. Additionally, as $\pi$ is open it is a quotient map onto its image. The definintion of the quotient topology gives that $\pi(\delta(Z(R))$ is closed in $\pi(W(R))$. But $\pi(\delta(Z(R)))=\Delta(X(R))\cap \pi(W(R))$. As $\pi(W(R))$ is an open containing $(x,y)$, $\Delta(\Xf(R))$ is a closed near $(x,y)$. As $\Delta(\Xf(R))$ is the image of the diagonal $\Xf(R)\to \Xf(R)\times\Xf(R)$, we conclude this diagonal is closed near $(x,y)$. As $(x,y)$ is arbitrary, we conclude the diagonal in $\Xf(R)\times \Xf(R)$ is closed immersion so $\Xf$ is Haussdorff

\end{proof}

\begin{lem}\label{lem:finn}
  Let $R$ be a local field of characteristic zero. Let $\Xf\to \Yf $ be a surjective map of finite-type Deligne-Mumford stacks over $R$. Then there exists a finite extension $R'$ of $R$ such that each point in $y\in \Yf(R)$ lifts to point in $\Yf(R')$.
\end{lem}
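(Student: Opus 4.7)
The approach is to reduce the lemma to a statement about surjective morphisms of finite-type schemes, where a uniform bound on the residue-field degree of closed points of the fibres, combined with the finiteness of low-degree extensions of a local field, produces the required $R'$.

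For the reduction, I would first apply Proposition~\ref{prop:suffconnDM} to pick smooth covers $\pi_V\colon V\to\Xf$ and $\pi_W\colon W\to\Yf$ by schemes with the property that $V(R'')\to\Xf(R'')$ and $W(R'')\to\Yf(R'')$ are surjective for every sufficiently disconnected $R$-algebra $R''$. By Proposition~\ref{prop:x8x8} the local field $R$ and every finite extension of it are complete noetherian local and hence sufficiently disconnected, so these surjections are available over $R$ and over any candidate extension $R'$. In particular, lifting $y\in\Yf(R)$ to $\Xf(R')$ amounts to first lifting $y$ to some $w\in W(R)$ and then finding a lift in $V(R')$ matching $w$ in $\Yf$. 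The $2$-fibre product $V\times_\Yf W$ is an algebraic space to which Proposition~\ref{prop:suffconnDM} applies again, producing a scheme cover $U\to V\times_\Yf W$ with $U(R'')$ surjecting onto $(V\times_\Yf W)(R'')$. The composite morphism $f\colon U\to V\times_\Yf W\to W$ is then a surjective morphism of finite-type $R$-schemes, and a lift of $w\in W(R)$ to $u\in U(R')$ yields, through the projections $U\to V\to\Xf$, the desired $R'$-lift of $y$. It therefore suffices to prove: for any surjective morphism $f\colon U\to W$ of finite-type $R$-schemes, there is a finite extension $R'/R$ with $f(U(R'))\supseteq W(R)$.

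For this scheme-theoretic statement I would cover $U$ by finitely many affine opens $U_\alpha$ and embed each $U_\alpha\hookrightarrow W\times \A^{N_\alpha}_R$ as a closed subscheme. By the constructibility of Hilbert polynomials in a finite-type family, the affine degrees of the fibres $f^{-1}(w)\cap U_\alpha$ as $w$ ranges over $W$ are bounded by an integer $D$ depending only on $f$ and the chosen embeddings. For each $w\in W(R)$ surjectivity forces $f^{-1}(w)\cap U_\alpha\ne\emptyset$ for some $\alpha$; iteratively intersecting with generic affine hyperplanes in $\A^{N_\alpha}_R$ (which may be chosen $R$-rationally since $R$ is infinite) cuts this non-empty affine scheme down to a zero-dimensional subscheme of degree $\leq D$ contained in $f^{-1}(w)\cap U_\alpha$, and any closed point of the latter is a $K$-rational point of $U$ lying over $w$ with $[K:R]\leq D$. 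Because $R$ is a local field of characteristic zero, Krasner's lemma implies that there are only finitely many extensions of $R$ of degree at most $D$ up to isomorphism, so their compositum $R'$ is a finite extension of $R$; one then has $f(U(R'))\supseteq W(R)$ and hence the lemma.

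The main obstacle is the uniform fibre-degree bound and checking that the generic affine hyperplane sections actually meet $f^{-1}(w)\cap U_\alpha$ itself (not merely its projective closure in $W\times\P^{N_\alpha}_R$). Once this uniformity is in place, the finiteness of bounded-degree local-field extensions in characteristic zero and the existence of good covers from Proposition~\ref{prop:suffconnDM} assemble the proof without further work.
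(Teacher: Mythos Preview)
Your reduction to a surjective morphism $f\colon U\to W$ of finite-type $R$-schemes is correct and parallels the paper's, though you invoke Proposition~\ref{prop:suffconnDM} once and for all while the paper reduces step by step through \'etale covers of $\Xf$ and then of $\Yf$. The real divergence is in how the scheme case is handled. The paper uses generic smoothness (this is where characteristic zero enters beyond the finiteness of bounded-degree extensions): over a dense open $V\subseteq W$ the map $f$ is smooth, hence admits sections \'etale-locally on the base, so the already-established \'etale case lifts every point of $V(R)$; noetherian induction on $W\setminus V$ finishes. You instead bound the projective degree of each fibre by constructibility of Hilbert polynomials and then cut each fibre down by generic affine hyperplane sections to reach a zero-dimensional subscheme of degree $\le D$, extracting a closed point of bounded degree. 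This is correct in outline, but, as you flag, verifying that the successive hyperplane slices remain non-empty and land inside the affine chart (rather than entirely at infinity in the projective closure) requires controlling the intersection of $\overline{f^{-1}(w)\cap U_\alpha}$ with the hyperplane at infinity at each stage, which is genuine extra bookkeeping. The paper's route sidesteps this at the cost of invoking generic smoothness and noetherian induction; yours yields a more explicit degree bound but is heavier to make fully precise.
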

\begin{proof}
  As there are only finitely many extensions of $R$ of a given degree, it suffices to prove that there is a number $n$ such for that each $y\in \Yf(R)$ there is a finite extension $R'$ of  $R$ of degree less than or equal to $n$, such that $y$ lifts to some point of $\Xf(R')$.

  If $\Xf\to \Yf$ is \'etale, then the degree must be bounded and the result of the lemma holds. If $X\to \Xf$ is any smooth cover, if the result of the lemma holds for $X\to \Yf$ it holds for $\Xf \to \Yf$. In this way we pass to the case when $\Xf$ is an algebraic space.

  Now if the $Y\to \Yf$ is a surjective morphism for which the result of the lemma holds, and it holds for $\Xf\times_{\Yf} Y\to Y$, then it holds for $\Xf \to \Yf$. As the result of the lemma holds for \'etale covers, we may thus replace $\Yf$ by a scheme. We have reduced to the case when $\Xf$ is an algebraic space and $\Yf$ is a scheme. But we take any surjective map $X\to \Xf$ by a scheme, and again it suffices to show the result of the lemma holds for $X\to \Yf$.

  Therefore, we must prove the result of the lemma holds for $X\to Y$ a surjective map of schemes. In this case, there is an open subset $U\subsetq Y$ over which $X\to Y$ is smooth. Smooth maps have \'etale sections.  Therefore, over the smooth locus the result of the lemma follows from the \'etale case. We then reduce to $Y\setminus U$ and lemma then follows from noetherian induction.  
\end{proof}

\begin{prop}
  Let $R$ be a local field of characteristic zero. Let $\Xf$ be a proper Deligne-Mumford stack with finite  diaganal over $R$. Then $\Xf(R)$ is a compact topological space.
\end{prop}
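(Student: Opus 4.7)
Since $\Xf$ is proper and hence separated, $\Xf(R)$ is Hausdorff by Proposition \ref{prop:shaus}. The plan is therefore to exhibit $\Xf(R)$ as the continuous image of a compact topological space, which combined with Hausdorffness will yield compactness.

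To build such a space, I would first invoke Chow's lemma for proper Deligne--Mumford stacks with finite diagonal to obtain a projective $R$-scheme $X$ together with a finite surjective morphism $X \to \Xf$. Applying Lemma \ref{lem:finn} to this morphism yields an integer $n$ such that every $x \in \Xf(R)$ lifts to a point of $X(R')$ for some finite extension $R'/R$ of degree at most $n$. Since $R$ is a local field of characteristic zero, only finitely many extensions $R_1,\dots,R_k$ of $R$ of degree at most $n$ exist.

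Next I would Weil restrict to assemble the various $X(R_i)$ into a single $R$-object. Set $Y_i := \Res_{R_i/R} X_{R_i}$; this is a proper algebraic $R$-space (Weil restriction of a proper scheme along a finite \'etale morphism is proper, as can be checked after base change to $R_i$) with $Y_i(R) = X(R_i)$, and hence $Y_i(R)$ is compact by Proposition \ref{prop:chow}. There is a canonical adjunction morphism $\Xf \to \Res_{R_i/R}\Xf_{R_i}$, and I would set
\[
  W_i := Y_i \times_{\Res_{R_i/R}\Xf_{R_i}} \Xf.
\]
A point of $W_i(R)$ is a pair $(y,x)$ with $y \in X(R_i)$ and $x \in \Xf(R)$ together with an isomorphism of their images in $\Xf(R_i) = (\Res_{R_i/R}\Xf_{R_i})(R)$; thus by Lemma \ref{lem:finn}, the second projection $\bigsqcup_i W_i(R) \to \Xf(R)$ is surjective. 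To see that each $W_i$ is a proper algebraic $R$-space, note that after base changing to $R_i$ the adjunction $\Xf \to \Res_{R_i/R}\Xf_{R_i}$ becomes the iterated diagonal $\Xf_{R_i} \to \Xf_{R_i}^{[R_i:R]}$, which is finite and representable because $\Xf$ has finite diagonal; both properties descend along $R \to R_i$, so $W_i \to Y_i$ is proper and representable, making $W_i$ a proper algebraic $R$-space.

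Therefore $\bigsqcup_i W_i(R)$ is a finite disjoint union of compact spaces mapping continuously and surjectively to the Hausdorff space $\Xf(R)$, which is thus compact. The main obstacle is the careful treatment of the Weil restriction $\Res_{R_i/R}\Xf_{R_i}$ as an algebraic (DM) stack and the verification that the adjunction morphism is representable and proper---the finite diagonal hypothesis is precisely what makes this step work. A secondary input is Chow's lemma for proper DM stacks, which produces the initial finite cover $X \to \Xf$ and must be imported from the literature rather than built out of the paper's own tools.
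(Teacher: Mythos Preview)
Your proposal is correct and mirrors the paper's proof: Chow's lemma furnishes a projective $X$ surjecting onto $\Xf$, Lemma~\ref{lem:finn} supplies a finite extension over which $R$-points lift to $X$, and the fiber product $\Xf \times_{\Res\Xf} \Res X$ is the compact space mapping onto $\Xf(R)$. The only differences are cosmetic---the paper uses a single extension $R'$ (which is what Lemma~\ref{lem:finn} actually outputs, so your enumeration $R_1,\dots,R_k$ is unnecessary), and argues properness of the fiber product via the projection to $\Xf$ rather than, as you do, via the projection to $Y_i$ together with the finite-diagonal hypothesis on the adjunction map.
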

\begin{proof}

  By Chow's lemma (see \cite{olssonstacks}), we may find a surjective map $X\to \Xf$ where $X$ is a projective scheme over $R$.

  By Lemma \ref{lem:finn} there is a finite extension $R'$ of $R$ such that each $R$ point of $\Xf$ lifts to an $R'$-point of $X$. Then let $Z$ be the pullback in the following diagram:

  \[
    \begin{tikzcd}
      Z\ar{r}\ar{d}& \Res_{R'/R} X\ar{d}\\
      \Xf \ar{r}& \Res_{R'/R}\Xf.
    \end{tikzcd}
  \]

  As $X$ is projective, the top right is a scheme.
  
  Then $Z$ is proper over $\Xf$ as it is the pullback of a proper map. As $\Xf$ is proper, this implies that $Z$ is proper. Therefore, $Z(R)$ is compact, and as $Z(R)\to X(R)$ is surjective and $\Xf(R)$ is Hausdorff by Proposition \ref{prop:shaus}, we conclude that $X(R)$ is compact.
\end{proof}

\begin{lem}\label{ff1}
  If $R$ and $R'$ are two continuously invertible, sufficiently disconnected, topological local rings and $R\to R'$ is a continuous ring homomorphism, then for any finite-type $R$-scheme $X$, $X(R)\to X(R')$ is continuous.
\end{lem}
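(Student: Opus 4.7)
The plan is to reduce the question to closed subschemes of affine space, where continuity will follow directly from the continuity of $R \to R'$. Since continuity of a map of topological spaces is a local question on the source, it suffices to verify that $X(R) \to X(R')$ is continuous in a neighborhood of each $x \in X(R)$.

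First I handle the affine case. Suppose $X$ is a closed subscheme of $\A^n_R$ cut out by polynomials $f_1,\ldots,f_m \in R[x_1,\ldots,x_n]$. By property $(3)$ of the excellent topologization, which applies to both $R$ and $R'$ as continuously invertible topological rings by Section \ref{sec:pvar}, $X(R) \subseteq R^n$ and $X(R') \subseteq R'^n$ carry the subspace topologies. The continuous ring homomorphism $\phi \colon R \to R'$ induces a coordinatewise continuous map $R^n \to R'^n$, which carries $X(R)$ into $X(R')$: for $(a_1,\ldots,a_n) \in X(R)$ one has $f_j(\phi(a_1),\ldots,\phi(a_n)) = \phi(f_j(a_1,\ldots,a_n)) = 0$ in $R'$ for every $j$. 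Its restriction $X(R)\to X(R')$ is therefore continuous. For an arbitrary affine finite-type $R$-scheme, the topology on the $R$-points is defined via an arbitrary closed embedding into some $\A^n_R$, and the same argument applies.

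For the general case, fix $x \in X(R)$. Because $R$ is local, $\Spec R$ has a unique closed point whose only Zariski open neighborhood in $\Spec R$ is $\Spec R$ itself, so the morphism $x\colon \Spec R \to X$ factors through any affine open $U \subseteq X$ containing the image of the closed point. By property $(5)$, $U(R) \subseteq X(R)$ and $U(R') \subseteq X(R')$ are open subsets with the subspace topologies. The map $X(R) \to X(R')$ sends $U(R)$ into $U(R')$: if $\Spec R \to X$ factors through $U$, then so does the composite $\Spec R' \to \Spec R \to X$. Hence on the open neighborhood $U(R)$ of $x$, the map $X(R) \to X(R')$ agrees with the composition $U(R) \to U(R') \hookrightarrow X(R')$, which is continuous by the affine case together with property $(5)$. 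Since $x$ was arbitrary, $X(R) \to X(R')$ is continuous.

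There is no substantive obstacle in this argument; the proof is essentially bookkeeping. The only hypothesis on $R$ that is used in an essential way is locality, which guarantees the factorization of $R$-points through affine opens; the sufficiently disconnected hypothesis plays no direct role in this lemma and is presumably included for uniformity with the surrounding framework.
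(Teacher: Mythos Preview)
Your proof is correct and follows essentially the same route as the paper's: reduce to an affine open using locality of $R$ together with property~(5), then embed in $\A^n$ and use property~(3) to reduce to the coordinatewise map $R^n\to R'^n$, whose continuity is immediate from continuity of $R\to R'$ (equivalently, properties~(1) and~(4)). Your remark that the sufficiently-disconnected hypothesis plays no role here is also accurate.
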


\begin{proof}
  As $R$ and $R'$ are both local, any $R$-point of $X$ is contained in an affine open. In this way we reduce to the case that $X$ is affine using property $(5)$ of Definition \ref{def:good}. Next, we embed $X$ in $\A^n_R$  for some $R$, and reduce to the  case when $X=\A^n_R$ using property $(3)$ of Definition \ref{def:good}. Then the fact that $\A^n_R(R)\to \A^n_R(R')$ is continuous follows from properties $(1)$ and $(4)$ of Definition \ref{def:good}.
\end{proof}

\begin{thm}\label{thm:opensubring}
  Let $R'$ be an essentially analytic topological ring. Let $R$ be an open subring of $R'$ that is also essentially analytic. Let $\Xf$ be an algebraic stack over $R$. Then $\Xf(R)\to \Xf(R')$ is an open map.

  If additionally $R$ is a complete discrete valuation ring with the usual topology given by the maximal ideal, and $R'$ is its fraction field and $\Xf$ is separated, then $\Xf(R)\to \Xf(R')$ is an open embedding.
\end{thm}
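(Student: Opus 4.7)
The plan is to prove openness by the standard bootstrap from schemes to algebraic spaces to algebraic stacks, and then deduce the open-embedding upgrade by proving injectivity via the valuative criterion of properness.

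First, for a finite-type $R$-scheme $X$, I claim $X(R)\to X(R')$ is an open embedding. Continuity is Lemma~\ref{ff1}. For $X=\A^n$, the map $R^n\hookrightarrow (R')^n$ is an open embedding because $R\subseteq R'$ is open. A closed subscheme $Z\hookrightarrow \A^n$ gives $Z(R)=Z(R')\cap R^n$ with the subspace topology by property~(3), so $Z(R)\hookrightarrow Z(R')$ is still an open embedding. Because $R$ and $R'$ are local, every $R$- or $R'$-point of a general finite-type $X$ factors through an affine open, so property~(5) patches the affine cases together into an open embedding globally.

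For an algebraic space or algebraic stack $\Xf$ over $R$, Proposition~\ref{prop:suffconnDM} gives a scheme $Z$ with a smooth cover $\pi\colon Z\to\Xf$ such that both $Z(R)\to\pi(Z(R))$ and $Z(R')\to\pi(Z(R'))$ are surjective open quotient maps (Propositions~\ref{ref:126},~\ref{prop:algopen},~\ref{quoquo}, and~\ref{prop:12}). Given an open $W\subseteq\pi(Z(R))$, its preimage in $Z(R)$ is open; its image in $Z(R')$ is open by the scheme case; and its image in $\pi(Z(R'))$ under the open map $\pi_{R'}$ is open. A set-theoretic chase identifies this with the image of $W$. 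Since each $\pi(Z(R))$ is open in $\Xf(R)$ and the $\pi(Z(R))$ cover $\Xf(R)$ as the smooth cover varies (by Theorem~\ref{thm:lift}), the claim follows globally.

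For the second statement, a continuous open injective map is automatically an open embedding, so it suffices to establish injectivity under the DVR hypotheses. Given $x_1,x_2\in \Xf(R)$ with isomorphic images in $\Xf(R')$, choose an $R'$-isomorphism $\varphi\colon x_1|_{R'}\ito x_2|_{R'}$. This $\varphi$ is an $R'$-point of the algebraic space $I=\operatorname{Isom}_\Xf(x_1,x_2)$, which is the base change of the diagonal $\Xf\to\Xf\times_R\Xf$ along $(x_1,x_2)\colon\Spec R\to \Xf\times_R\Xf$. Separatedness of $\Xf$ makes the diagonal proper, hence $I\to\Spec R$ is proper, and the valuative criterion of properness for algebraic spaces over the complete DVR $R$ extends $\varphi$ to an $R$-isomorphism $x_1\cong x_2$ in $\Xf(R)$.

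The main technical obstacle is the diagram chase in the stack case: verifying that openness of $\Xf(R)\to\Xf(R')$ can be checked locally on the cover $\{\pi(Z(R))\}$ requires the colimit topology on $\Xf(R)$ to interact well with each $\pi(Z(R))\hookrightarrow \Xf(R)$ being an open inclusion whose quotient topology from $Z(R)$ agrees with the subspace topology. These compatibilities are already established in Sections~\ref{sec:algspace} and~\ref{sec:algstack}, so once they are invoked the rest of the argument is straightforward.
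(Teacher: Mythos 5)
Your proof is correct and follows the same route as the paper: reduce to a family of smooth scheme covers $Z_N\to\Xf$ surjecting on both $R$- and $R'$-points via Proposition~\ref{prop:suffconnDM}, run the quotient-map diagram chase using Theorem~\ref{thm:open}, and upgrade to an open embedding via the valuative criterion for separatedness. You also fill in two steps the paper asserts without comment --- the openness of the scheme-level map $Z(R)\to Z(R')$ (via the affine-space case and properties (3) and (5) of the topologization) and the Isom-space unpacking of the valuative criterion --- which is a welcome elaboration but not a different approach.
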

\begin{proof}
By Lemma \ref{ff1}, Proposition \ref{ref:ringfunc} applies, so $\Xf(R)\to \Xf(R')$ is continuous.

For every $N\geq 1$, let $\pi_N:Z_N\to \Xf$ be as in Proposition \ref{prop:suffconnDM}. For every $x\in \Xf(R)$ there exists an $N$ such that $x$ lifts to a point $z\in Z_N(R)$, and for every $x\in \Xf(R')$ there exists an $N$ such that $x$ lifts to a point $z\in Z_N(R')$.

  Theorem \ref{thm:open} implies that $\Xf(R)=\bigcup  \pi_{N}(Z_{N}(R))$ where each $\pi_N(Z_N(R))$ is an open subset, and also that $\Xf(R')\cong \bigcup_N\pi_{N}(Z_{N}(R'))$.

  Now consider the commutative diagram (as sets first)
  \begin{center}
    \begin{tikzcd}
      Z_{N}(R)\ar[r]\ar[d]& Z_{N}(R')\ar[d]\\
      \pi_{N}( Z_{N}(R))\ar[r]&\pi_{N}( Z_{N}(R')).
    \end{tikzcd}
  \end{center}

  The vertical arrows are quotients and are open by Theorem \ref{thm:open}, and the top arrow is open. Thus the bottom arrow must be an open. As the union of open maps is open, $\Xf(R)\to \Xf(R')$ is an open inclusion.

  If $R$ is a complete discrete valuation ring and $R'$ its fraction field and $\Xf$ is separated, then $\Xf(R)\to \Xf(R')$ is injective by the valuative criterion for separatedness. In this case, $\Xf(R)\to \Xf(R')$ is an injective open map, so must be an open immersion.
\end{proof}

\section{Products of local rings}
  Let $I$ be an index set, and let $R=\prod_{i\in I}R_i$ be a product of rings $R_i$. For any $i\in I$, let $e_i\in R$ be the element which is $1$ in the $i$th position and $0$ in all others. For $J\subseteq I$, let $e^J$ be the element of $R$ whose the $i$th component is $1$ if $i\in J$ and $0$ otherwise, and let $R^J=\prod_{i\in J}R_i=R[1/e^J]$. Let $U^J=\Spec R[1/e^J]\subseteq \Spec R$. For $J_1,J_2\subseteq I$, $U^{J_1}\cap  U^{J_2}=U^{J_1\cap J_2}$ as $\Spec R[1/e^{J_1}]\cap \Spec R[1/e^{J_2}]=\Spec R[1/e^{J_1}e^{J_2}]=\Spec R[1/e^{J_1\cap J_2}]$. Similarly, $U^{J_1}\cup  U^{J_2}=U^{J_1\cup J_2}$. In particular, this implies that that if $J_1$ and $J_2$ are disjoint, $U^{J_1}\cap U^{J_2}=\emptyset$, and if $J_1,\ldots,J_k$  is a partition of $I$, $\Spec R=U^{J_1}\coprod \ldots\coprod U^{J_k}$. We will use the notation and facts presented in this paragraph throughout the section.

\begin{prop}
  Let $I$ be an index set and $\{k_i\}_{i\in I}$ be a set of fields indexed by $I$. Let $R=\prod_{i\in I}k_i$. Then any open cover of $\Spec R$ can be refined to a disjoint open cover, and furthermore the open sets in that disjoint open cover may be taken to be of the form $\Spec R[1/e^J]\subseteq \Spec R$ for $J\subseteq I$.
\end{prop}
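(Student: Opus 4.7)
The plan is to show that the sets $U^J$ (for $J\subseteq I$) form a basis for the Zariski topology on $\Spec R$, and then combine this with quasi-compactness and a disjointification argument. First, for any element $f=(f_i)\in R$, set $J_f=\{i\in I: f_i\neq 0\}$. Since each $k_i$ is a field, $f$ differs from the idempotent $e^{J_f}$ by a unit of $R$ (namely the element that agrees with $f$ on $J_f$ and equals $1$ off $J_f$), so $D(f)=D(e^{J_f})=U^{J_f}$. As the $D(f)$ form a basis for the topology on $\Spec R$, so do the $U^J$.

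Given any open cover $\{V_\alpha\}_{\alpha\in A}$ of $\Spec R$, I would replace it by the collection of all $U^J$ that lie inside some $V_\alpha$; this collection still covers $\Spec R$ by the basis property. Since $\Spec R$ is quasi-compact, there is a finite subcollection $U^{J_1},\dots,U^{J_n}$ that already covers $\Spec R$, with each $U^{J_k}\subseteq V_{\alpha(k)}$ for some $\alpha(k)\in A$. Using the identity $U^{J_1}\cup\cdots\cup U^{J_n}=U^{J_1\cup\cdots\cup J_n}$ recorded in the paragraph introducing the notation, together with the fact that $U^J=\Spec R$ forces $e^J$ to be a unit and hence $J=I$, we conclude $J_1\cup\cdots\cup J_n=I$.

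Finally I would disjointify: set $J'_1=J_1$ and $J'_k=J_k\setminus(J_1\cup\cdots\cup J_{k-1})$ for $k\geq 2$. The $J'_1,\dots,J'_n$ form a partition of $I$, so the identities already recorded give $U^{J'_k}\cap U^{J'_\ell}=U^{\emptyset}=\emptyset$ for $k\neq\ell$ and $\coprod_k U^{J'_k}=U^I=\Spec R$. Since $J'_k\subseteq J_k$, the intersection formula yields $U^{J'_k}\subseteq U^{J_k}\subseteq V_{\alpha(k)}$, exhibiting the desired disjoint refinement by sets of the required form.

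The argument is essentially formal once the initial basis observation is made, so I do not anticipate a serious obstacle; the only step requiring any care is the verification that every basic open has the form $U^J$, which uses in an essential way that the factors $k_i$ are fields (so that every element of $R$ becomes a unit after multiplying by the characteristic function of its support).
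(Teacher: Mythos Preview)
Your proof is correct and follows essentially the same route as the paper: both arguments use that every element of $R$ is a unit times an idempotent (so every basic open $D(f)$ equals some $U^J$), invoke quasi-compactness to pass to a finite cover $U^{J_1},\dots,U^{J_n}$, deduce $\bigcup_k J_k=I$, and then refine to a partition of $I$. Your presentation is slightly more explicit (you spell out the disjointification $J'_k=J_k\setminus(J_1\cup\cdots\cup J_{k-1})$ and the reason $U^J=\Spec R$ forces $J=I$), but the underlying argument is the same.
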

\begin{proof}
  Let $\{U_j\}$ be an open cover of $\Spec R$. By refining the cover, we may assume each $U_j$ equals $\Spec R[1/f_j]$ for some $f_j\in R$. As $\Spec R$ is quasi-compact, we may assume that the cover is finite.

  Now we claim each element of $R$ is the product of a unit and an idempotent. Indeed let $r\in R$ and let $r_i\in k_i$ be its $i$th component. Let $J=\{i\in I: r_i\neq 0\}$. Let $s_i=r_i$ for $i\in J$ and $s_i=1$ for $i\notin J$. Then the element of $R$ which is $s_i^{-1}$ in the $i$th position is an inverse of $s$, so $s$ is a unit. Futhermore, $r=s e^J$, so $r$ has the claimed form.

  We conclude from this that there are $J_j\subseteq I$ such that $f_j=u_j e^{J_j}$ where $u_j$ is a unit. Then $\Spec R[1/f_j]=\Spec R[1/e^{J_j}]$.

  As the $\Spec R[1/e^{J_j}]$ cover $\Spec R$, $\bigcup_j J_j =I$ .We refine the finite union $\bigcup_j J_j=I$ to a finite partition of $I$, $\coprod_{k} K_k=I$. Then $\coprod_k \Spec R[1/e^{K_k}]$ is a pairwise disjoint open cover refining the original cover.
\end{proof}

\begin{prop}\label{prop:fofo}
  Let $I$ be an index set, and let $\{\mathscr{O}_v\}_{v\in I}$ be a set of local rings indexed by $I$. Let $R=\prod_{v\in I}\mathscr{O}_v$ and let $X=\Spec R$. Then any open cover of $X$ can be refined to a disjoint open cover, and furthermore the opens in that disjoint open cover may be taken to be of the form $\Spec R[1/e^J]\subseteq \Spec R$ for $J\subseteq I$.
\end{prop}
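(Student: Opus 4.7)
The plan is to follow the template of the previous proposition (for products of fields), replacing the fact that ``every element of a product of fields is a unit times an idempotent'' with a weaker statement that still suffices: for any $f \in R$, the subset $J = \{v \in I : f_v \in \O_v^\times\}$ of coordinates where $f$ is a unit in the local factor is enough to build an inclusion $\Spec R[1/e^J] \subseteq \Spec R[1/f]$. This is the natural analogue of the field-case factorization when each $\O_v$ is only local.

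First I would reduce to a finite cover by principal opens. Since $\Spec R$ is quasi-compact and every open is a union of principal opens $\Spec R[1/f]$, we may assume the cover is $\Spec R[1/f_1], \ldots, \Spec R[1/f_n]$ for some $f_1, \ldots, f_n \in R$.

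Next, for each $v \in I$, let $\p_v \seq R$ be the ideal of tuples whose $v$-th coordinate lies in $\m_v$; the quotient $R/\p_v \cong \O_v/\m_v$ is a field, so $\p_v$ is a maximal ideal of $R$ and in particular a point of $\Spec R$. Because our cover covers $\p_v$, there is some index $k$ with $f_k \notin \p_v$, equivalently $f_{k,v} \in \O_v^\times$. Pick one such $k$ (say the smallest) and call it $\sigma(v)$; set $K_k = \sigma^{-1}(k)$. This gives a finite partition $I = \coprod_k K_k$ (with possibly empty parts, which may be discarded).

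Finally I would verify the inclusion $\Spec R[1/e^{K_k}] \subseteq \Spec R[1/f_k]$. Since $R[1/e^{K_k}] \cong \prod_{v \in K_k} \O_v$ is a product of local rings, an element is a unit iff each of its coordinates is; by the defining property of $\sigma$, every coordinate $f_{k,v}$ with $v \in K_k$ is a unit, so $f_k$ becomes a unit in $R[1/e^{K_k}]$. As the $e^{K_k}$ are pairwise orthogonal idempotents summing to $1$, the opens $\Spec R[1/e^{K_k}]$ are pairwise disjoint and cover $\Spec R$, yielding the desired disjoint refinement. The main substantive step is really the conceptual one of identifying the correct $J$: without the field-case factorization one has to directly look at the indices where $f$ lands in $\O_v^\times$ and verify that the maximal ideals $\p_v$ alone are enough to force the partition to work; the rest of the argument is then a direct adaptation of the field case.
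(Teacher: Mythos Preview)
Your proof is correct and rests on the same key observation as the paper's: for each $f_j$ in a finite principal cover, the relevant datum is the set $J_j=\{v:f_{j,v}\in\O_v^\times\}$, and the idempotent $e^{J_j}$ cuts out a principal open contained in $\Spec R[1/f_j]$. The paper's execution differs slightly in organization: it first establishes $\Spec R[1/e^{J_j}]\subseteq\Spec R[1/f_j]$ via the divisibility $f_j\mid e^{J_j}$, then shows $\bigcup_j J_j=I$ by reducing modulo $M=\prod_v\m_v$ to the product-of-fields case (the previous proposition), and only afterward refines $\{J_j\}$ to a partition $\{K_k\}$. You bypass both detours: you use the maximal ideals $\p_v$ directly as points of $\Spec R$ to witness that every $v$ lies in some $J_j$, and you build the partition in a single stroke via the choice function $\sigma$. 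The result is a cleaner, self-contained argument that does not need the field case as input; the paper's route, on the other hand, makes the parallel with the previous proposition explicit.
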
 

\begin{proof}
  For each $v$, let $\m_v$ be the maximal ideal of $\mathscr{O}_v$ and let $k_v$ be the residue field. Let $M=\{(r_v)_{v\in I}\in R: \text{ for all }r_v\in \m\}$.

  Let $\{U_j\}$ be a cover of $\Spec R$. As $\Spec R$ is quasi-compact, we can refine this cover to a finite cover. We can refine the cover further so that each $U_j= \Spec R[1/f^j]$ for $f^j\in I$.

  Let $J_j=\{v  \in I: f^j_v\in \mathscr{O}_v^\times\}$. Because $f^j|e^{J_j}$, $\Spec R[1/e^{J_j}]\subseteq \Spec R[1/f^j]$.

  Note that the image of $f^j$ in $R/M=\prod_{v\in I}k_i$ is equal to the image of $e^{J_j}$ in $R/M$ times a unit. This means that the $U_j$ cover $\Spec R/M$ if and only if the $\Spec R[1/e^{J_j}]$ do. But  by the preparation paragraph at the beginning of this section, this implies $\bigcup_j J_j =I$. Again, by that paragraph this implies that $\bigcup_j \Spec R[1/e^{J_j}]=\Spec R$, and thus $\bigcup_j \Spec R[1/e^{J_j}]$ is a refinement of our original cover.

  Now again, let $\{K_k\}$ be a finite partition of $I$ which refines $\{J^j\}$. Then $\{\Spec R[1/e^{K_k}]\}$ is a refinement of $ \{\Spec R[1/e^{J_j}]\}$, which is a refinement of our original cover. But now as the $K_k$ are disjoint, the cover by the $\Spec R[1/e^{K_k}]$ is a disjoint open cover and thus is the desired cover.
\end{proof}

\begin{lem}\label{lem:etale}
  Let $\{R_i\}_{i\in I}$ be a collection of complete discrete valuation rings and let $N\geq 1$ be an integer. For each $i$, let $S_i$ be a finite free \'etale $R_i$-algebra of rank less than or equal to $N$. Set  $R=\prod_i R_i$ and $S=\prod_i S_i$. Then $S$ is a finite \'etale $R$-algebra.
\end{lem}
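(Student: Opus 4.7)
The plan is to reduce, via the idempotent decomposition of $\Spec R$ introduced at the beginning of this section, to the case where all the $S_i$ have a common rank $n$ over $R_i$, and then establish finite \'{e}tale-ness by the discriminant criterion for finite free algebras.

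First, I would partition $I = \bigsqcup_{n=0}^{N} I_n$ where $I_n = \{i \in I : \rk_{R_i}(S_i) = n\}$. The idempotents $e^{I_n}\in R$ induce ring isomorphisms $R \cong \prod_n R^{I_n}$ and $S \cong \prod_n S^{I_n}$, and a clopen decomposition $\Spec R = \coprod_n U^{I_n}$. Since being finite \'{e}tale is a Zariski-local property on the base, it then suffices to show each $S^{I_n}$ is finite \'{e}tale over $R^{I_n}$. The $n=0$ piece is trivial since $S^{I_0}=0$.

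Next, fix $n\geq 1$. I would choose an $R_i$-basis $\{e_{i,1},\ldots,e_{i,n}\}$ of $S_i$ for each $i \in I_n$ and assemble the componentwise tuples $b_k = (e_{i,k})_{i \in I_n}$ into an $R^{I_n}$-basis of $S^{I_n}$. Concretely, any $(s_i)_i \in S^{I_n}$ has a unique expansion $s_i=\sum_k r_{i,k}e_{i,k}$ in each factor, and collecting coefficients into $r_k=(r_{i,k})_{i\in I_n}\in R^{I_n}$ yields the unique $R^{I_n}$-linear expression of $(s_i)_i$ in the $b_k$'s. This exhibits $S^{I_n}$ as free, hence finite and flat, of rank $n$ over $R^{I_n}$. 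To upgrade this to \'{e}tale-ness, I would compute the discriminant in the basis $\{b_k\}$: since trace and determinant commute with products, it equals the tuple $(\disc(S_i/R_i))_{i\in I_n}\in R^{I_n}$. Each $\disc(S_i/R_i)$ is a unit in $R_i$ because $S_i$ is \'{e}tale over $R_i$, so the discriminant is a unit in $R^{I_n}$. By the standard discriminant criterion for finite free algebras, $S^{I_n}$ is \'{e}tale over $R^{I_n}$.

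The main subtlety to watch for is that prime ideals of the infinite product $R = \prod R_i$ are complicated (they involve ultrafilters on $I$), so a naive fiber-by-fiber check of \'{e}tale-ness at closed points is not available. The discriminant criterion sidesteps this obstacle by providing a global invariant that is compatible with arbitrary products. Note that the completeness and discrete valuation hypotheses on the $R_i$ are not actually used in this argument; the proof goes through for any commutative rings $R_i$ over which the $S_i$ are finite free \'{e}tale of uniformly bounded rank.
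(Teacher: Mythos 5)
Your proof is correct, and it takes a genuinely different route from the paper. The paper's argument first partitions $I$ so that the multiset of ranks of the discrete valuation ring factors of $S_i$ is constant, then peels off one factor at a time to reduce to the case where each $S_i$ is a single unramified extension of $R_i$; at that point it invokes the primitive element theorem to write $S_i\cong R_i[x]/f_i(x)$, assembles the monic polynomials into $f\in R[x]$, shows $S\cong R[x]/f(x)$, and verifies \'etaleness by assembling the B\'ezout relations $a_if_i'\equiv 1\pmod{f_i}$ into a global one. Your approach avoids the primitive element theorem entirely: you only partition by total rank, observe that freeness, the trace form, and hence the discriminant are computed componentwise, and conclude by the discriminant criterion for finite free algebras. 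This is shorter, and as you correctly note it is also strictly more general, since it never uses that the $R_i$ are complete or even local---only that each $S_i$ is finite free \'etale of uniformly bounded rank. The paper's argument, by contrast, genuinely uses that the $S_i$ are products of DVRs (hence needs the $R_i$ to be complete DVRs) in order to invoke the primitive element theorem for unramified extensions. Both proofs handle the same essential subtlety---that \'etaleness cannot be checked fiberwise over $\Spec R$---by producing a single global witness ($f$ and $f'$ coprime in one case, a unit discriminant in the other).
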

\begin{proof}
  For any $i\in I$, $S_i$ is the direct product of finitely many discrete valuation rings. To $i$ we may attach a multiset $M_i$ whose elements are the ranks over $R_i$ of the discrete valuation rings whose product is $S_i$. As the rank of $S_i$ is bounded, there  are only finitely many possible such multisets. Thus we find a finite partition of $I$ such that every subset $J$ in the partition has the property that the $M_i$ are the same for all $i\in J$. As the conclusion of the lemma is local on $\Spec R$, we may thus pass to an open subset of $\Spec R$ corresponding to one subset of the partition. In this way we may assume that $M_i$ is constant for $i\in I$. Let $M=M_i$ for any $i\in I$.

  Now let $m_1,\ldots,m_k$ be an enumeration of the elements of $M$. For any $i\in I$,  $S_i$ may be written $\prod_{j=1}^kS_{i,j}$ where $S_{i,j}$ is a free \'etale $R_i$ algebra of rank $m_j$. Therefore, $\prod_i S_i=\prod_i \prod_j S_{i,j}=\prod_j\prod_i S_{i,j}$. So to prove that $S$ is finite \'etale over $R$, it suffices to show that the $\prod_i S_{i,j}$ are finite \'etale over $R$. Thus we replace $S_i$ with $S_{i,j}$ to assume that each $S_i$ is a discrete valuation of some fixed rank $m$ over $R_i$.

  Now  $S_{i}$ must be an unramified extension of $R_i$ of degree $m$, and by the principal element theorem for such extensions, we have that $S_{i}\cong R_i[x]/f_{i}(x)$ where $f_{i}(x)$ is a monic polynomial of degree $m$. Let $f(x)\in R[x]$ be the monic degree $m$ polynomial whose image in $R[x]$ is  $f_{i}(x)$ for all $i$ (its coefficients are in the $i$th component are given by the coefficients of $f_{i}$). There is a canonical map $R[x]/f(x)\to \prod_i R_i[x]/f_{i}(x)$ which is evidently both injective and surjective. Thus $S_{i}\cong R[x]/f(x)$.

  As each $S_{i}$  is \'etale, $f_i(x)$ and $f'_i(x)$ are coprime, so there are polynomials $a_i(x)$ of degree less than $m$ such that $a_i(x)f_i'(x)\cong 1$ mod $f_i(x)$. Let $a(x)\in R[x]$ be the polynomial of degree less than $m$ whose image in each $R_i[x]$ is $a_i(x)$. Then the image of $a(x)f'(x)$ in $R[x]/f(x)\cong \prod_i R_i[x]/f_{i}(x)$ is $1$. Thus $f(x)$ and $f'(x)$ are coprime, so $R[x]/f(x)$ is an \'etale $R$-algebra, and $S=R[x]/f(x)$.  
\end{proof}

\begin{lem}\label{tsts}
  Let $R$ be a product of local rings. Any finitely-generated projective module over $R$ of constant rank is free.
\end{lem}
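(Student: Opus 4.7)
The plan is to exploit the direct product decomposition of $R = \prod_{i \in I} R_i$ to reduce the claim to the classical fact that an idempotent matrix whose image has rank $n$ over a local ring is conjugate to the standard projection $\operatorname{diag}(I_n, 0)$.

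First, since $M$ is finitely generated and projective, I would realize $M$ as the image of an idempotent: $M = eR^k$ with $e \in \Mat_k(R)$ idempotent. Via the canonical identification $\Mat_k(R) = \prod_{i \in I} \Mat_k(R_i)$, decompose $e = (e_i)_{i \in I}$, so that each $e_i \in \Mat_k(R_i)$ is an idempotent over the local ring $R_i$ with image $M_i \coloneqq M \otimes_R R_i = e_i R_i^k$. Since every maximal ideal of $R_i$ pulls back to a maximal ideal of $R$, the constant rank hypothesis forces each $M_i$ to be projective of rank $n$ over the local ring $R_i$, hence free of rank $n$.

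Next, for each $i$ the splitting $R_i^k = M_i \oplus \ker(e_i)$ expresses $R_i^k$ as the direct sum of two finitely generated projective (hence free) $R_i$-modules of ranks $n$ and $k - n$ respectively. Concatenating bases of the two summands yields a matrix $g_i \in \GL_k(R_i)$ conjugating $e_i$ to $\operatorname{diag}(I_n, 0)$.

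Finally, I would assemble the $g_i$ into a global element. Since $R^\times = \prod_i R_i^\times$, one has $\GL_k(R) = \prod_i \GL_k(R_i)$, so $g \coloneqq (g_i)_{i \in I}$ lies in $\GL_k(R)$. Then $g^{-1} e g = \operatorname{diag}(I_n, 0) \in \Mat_k(R)$, and left multiplication by $g^{-1}$ furnishes an $R$-linear isomorphism $M = eR^k \cong \operatorname{diag}(I_n, 0) R^k = R^n$, as desired. No step looks like a serious obstacle: the whole point is that the product decomposition of $R$ propagates to matrix algebras and unit groups, letting the componentwise trivializations amalgamate into a single global one.
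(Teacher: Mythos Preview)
Your proof is correct, but it takes a different route from the paper's. The paper argues geometrically: it views $M$ as a locally free sheaf $\widetilde{M}$ on $\Spec R$, takes an open cover on which $\widetilde{M}$ is trivial, and then invokes Proposition~\ref{prop:fofo} (any open cover of the spectrum of a product of local rings refines to a finite \emph{disjoint} open cover by sets of the form $\Spec R[1/e^J]$) to conclude that $\widetilde{M}$ is free on each piece of a clopen partition of $\Spec R$; constant rank then forces global freeness. Your argument bypasses the topology of $\Spec R$ entirely, instead pushing the product decomposition of $R$ through to $\Mat_k(R)$ and $\GL_k(R)$ and assembling componentwise trivializations of the idempotent $e$ into a single conjugating matrix. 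Your approach is more elementary and self-contained, needing nothing beyond the classical fact that projectives over local rings are free; the paper's approach, on the other hand, slots naturally into its broader program, since Proposition~\ref{prop:fofo} is a structural result about $\Spec R$ that the paper develops and reuses elsewhere.
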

\begin{proof}
  Let $M$ be the projective module over $R$. The module $M$ corresponds to a locally free sheaf $\widetilde{M}$ on $\Spec R$. As $M$ is projective As there is an open cover of $\Spec R$ such that the restriction of $\widetilde{M}$ is free as $\widetilde{M}$ is locally free, Proposition \ref{prop:fofo} implies there is a finite disjoint open cover such that the restriction of $\widetilde{M}$ is free when restricted to this cover. But as the cover is disjoint and the rank of the sheaf is constant, this means that $\widetilde{M}$ must be free, which implies that $M$
\end{proof}

\begin{prop}\label{prodring}
  Let $R$ be a product of fields and complete discrete valuation rings. 
\end{prop}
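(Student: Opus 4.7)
The plan is to verify the two conditions of Definition~\ref{def:suffdisc}, namely that finitely generated projective $R$-modules are free and that every faithfully flat \'etale $R$-algebra $R'$ admits an $R'$-algebra that is finite \'etale over $R$.

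Condition~(1) follows directly from the two preceding results. If $M$ is a finitely generated projective $R$-module, its rank function on $\Spec R$ is locally constant and takes only finitely many values (bounded by the number of generators). The open cover of $\Spec R$ by loci of constant rank refines, by Proposition~\ref{prop:fofo}, to a finite disjoint cover of the form $\Spec R = \coprod_k \Spec R[1/e^{J_k}]$. Restricted to each factor, $M$ has constant rank and Lemma~\ref{tsts} supplies the freeness.

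For condition~(2), let $R \to R'$ be a faithfully flat \'etale map. Because \'etale maps are of finite presentation, I would first extract an integer $N \geq 1$ such that, for every field $k$ equipped with a ring map $R \to k$, the fiber $R' \otimes_R k$ has dimension at most $N$ over $k$ (a Bezout-type bound in terms of the degrees of a fixed presentation of $R'$). For each $i \in I$, base changing along $R \to R_i$ produces a faithfully flat \'etale $R_i$-algebra $R' \otimes_R R_i$ whose residue fiber has dimension at most $N$, using that faithful flatness is preserved by base change and that surjectivity of $\Spec R' \to \Spec R$ forces surjectivity after base change. Since each $R_i$ is a complete noetherian local ring, Proposition~\ref{prop:x8x8} guarantees that $R_i$ is sufficiently disconnected, so Lemma~\ref{lem:suffdisc} furnishes a finite \'etale $R_i$-algebra $R''_i$ that is free of rank $N!$ and carries an $(R' \otimes_R R_i)$-algebra structure. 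Setting $R'' \coloneqq \prod_i R''_i$ and composing $R' \to R' \otimes_R R_i \to R''_i$ for each $i$ endows $R''$ with an $R'$-algebra structure; Lemma~\ref{lem:etale} (whose proof extends to the mixed case with fields, the primitive element theorem playing the role of the principal element theorem on the field factors) then shows that $R''$ is finite \'etale over $R$.

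The main potential obstacle is the uniform degree bound $N$ across all indices $i \in I$, together with the minor extension of Lemma~\ref{lem:etale} to cover the case in which some factors $R_i$ are fields. The bound is routine because $R'$, being of finite presentation, descends to a finitely generated subring of $R$ over which the fiber dimension admits a global Bezout-type estimate; alternatively, one may cover $\Spec R'$ by finitely many standard \'etale charts and take the maximum of the degrees appearing. With that bound in hand and the trivial extension of Lemma~\ref{lem:etale}, the remaining assembly is straightforward.
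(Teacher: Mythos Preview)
Your approach to property~(2) is essentially the paper's: both pass to the individual factors $R_i$, invoke the complete-noetherian-local machinery there, form the product, and finish with Lemma~\ref{lem:etale}. The only difference is cosmetic---the paper applies Lemma~\ref{8989} directly to split off the finite \'etale summand $S_i \subset R'_i$, whereas you route through Proposition~\ref{prop:x8x8} and Lemma~\ref{lem:suffdisc} to force a uniform rank $N!$. Your explicit attention to the global degree bound $N$ actually fills in a point the paper asserts without justification (``bounded rank''), and both proofs tacitly need the easy extension of Lemma~\ref{lem:etale} to allow field factors, which you at least acknowledge.

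One genuine wrinkle in your treatment of property~(1): freeness on each piece of a finite clopen partition does \emph{not} give global freeness when the ranks differ---for instance $k \times 0$ is projective but not free over $k \times k$. So the final sentence of your first paragraph does not conclude what you want. The paper's proof has the identical gap (it simply cites Lemma~\ref{tsts}, which already assumes constant rank), so this is a defect in how Definition~\ref{def:suffdisc} interacts with non-connected $\Spec R$ rather than something peculiar to your argument; the only place the paper consumes property~(1) is in Lemma~\ref{lem:suffdisc}, where one may harmlessly first pass to a clopen piece of constant rank.
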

\begin{proof}
  Lemma \ref{tsts} already establishes property $(1)$ of sufficiently disconnected, so are left to establish property $(2)$. Let $R'$ be a faithfully flat \'etale $R$-algebra. We will find a $R'$-algebra that is finite \'etale as an $R$-algebra.
  
  For any module $M$ over $R$, we will denote by $\widetilde{M}$ its corresponding quasi-coherent sheaf on $\Spec R$.

  Write $R=\prod_{i\in I}R_i$ where $I$ is an index set and each $R_i$ is either a field or a complete discrete valuation ring. For each $i$, let $R'_i= R'\otimes_R R_i$. By Lemma \ref{8989} each $R_i'$ is of the form $S_i\times T_i$ where $S_i$ is finite \'etale over $R_i$ and $T_i$ is \'etale but not faithfully flat over $R_i$. As $R\to R'$ is faithfully flat so is $R_i\to R_i'$. Therefore, each $S_i$ is nonzero. Let $m_i$ be the degree of $S_i$ over $R_i$. Let $S=\prod_{i\in I}S_i$. Note that $S$ is finite and locally free over $R$, because it is the product of free $R_i$-modules of bounded rank. Finally, by Lemma \ref{lem:etale}, $S$ is finite \'etale over $R$. So now $S$ is free and finite \'etale over $R$ of rank $m$. Then $S$ is the desired $R'$-algebra.
\end{proof}
\begin{rmk}\label{prt}
  Let $R$ be a product of fields and complete discrete valuations. Then $R$ is sufficiently disconnected by Proposition \ref{prodring}, and the good topologization of $R$-points on finite-type $R$-schemes as described in Section \ref{sec:pvar} has the smooth quotient property by Proposition \ref{prop:sq}, we may topologize $\Xf(R)$ for any finite-type algebraic stack $\Xf$ over $R$ and the results of this paper apply to the topologization.
\end{rmk}

\begin{lem}\label{lem:prodsets}
  Let $R=\prod_i R_i$ be a product of fields and complete discrete valuation rings. Let $\Xf$ be a quasi-separated Deligne-Mumford stack over $R$. Then $\Xf(R)=\prod_i \Xf(R_i)$ as topological spaces.
\end{lem}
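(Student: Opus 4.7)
The plan is to compare both sides with a common quotient of $Z(R) = \prod_i Z(R_i)$ for a carefully chosen étale cover $\pi\colon Z\to \Xf$. Since $\Xf$ is Deligne-Mumford, Proposition \ref{prop:suffconnDM} furnishes a (quasi-separated) scheme $Z$ with an étale cover $\pi\colon Z\to \Xf$ such that $Z(R')\to \Xf(R')$ is surjective for every sufficiently disconnected $R$-algebra $R'$. By Proposition \ref{prodring}, both $R$ and each $R_i$ are sufficiently disconnected, so $\pi$ is surjective on $R$-points and on each set of $R_i$-points. By Remark \ref{rem:BC}, $Z(R)= \prod_i Z(R_i)$ as topological spaces.

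For the underlying set bijection $\Xf(R)\to \prod_i \Xf(R_i)$, surjectivity is immediate: given $(x_i)\in \prod_i \Xf(R_i)$, lift each $x_i$ to $z_i\in Z(R_i)$, assemble to $z\in \prod_i Z(R_i)= Z(R)$, and take $\pi(z)$. For injectivity, suppose $x,x'\in \Xf(R)$ have isomorphic images in $\Xf(R_i)$ for every $i$. Consider the isomorphism algebraic space $I=\Spec R\times_{(x,x'),\Xf\times\Xf}\Xf$; because $\Xf$ is quasi-separated Deligne-Mumford, $I$ is a quasi-separated algebraic space of finite type over $\Spec R$, and $I(R_i)\neq \emptyset$ for each $i$ by hypothesis. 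Applying Proposition \ref{prop:suffconnDM} to $I$ yields an étale cover $J\to I$ by a quasi-separated scheme, surjective on $R'$-points for sufficiently disconnected $R'$. Then each $J(R_i)$ is nonempty, and Remark \ref{rem:BC} applied to $J$ gives $J(R)=\prod_i J(R_i)\neq \emptyset$; this produces an $R$-point of $I$, i.e.\ an isomorphism $x\cong x'$.

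To promote this bijection to a homeomorphism, I note that both $Z(R)\to \Xf(R)$ and each $Z(R_i)\to \Xf(R_i)$ are open surjections, hence quotient maps, by Theorem \ref{thm:open} together with Proposition \ref{quoquo}. The product $\prod_i Z(R_i)\to \prod_i \Xf(R_i)$ is then also an open surjection: a basic open of $\prod_i Z(R_i)$ has the form $\prod_i U_i$ with $U_i=Z(R_i)$ for all but finitely many $i$, and its image is $\prod_i \pi(U_i)$, which is a basic open of $\prod_i \Xf(R_i)$ because $\pi$ is open on each factor and surjects on the cofinitely many trivial factors. Hence $\prod_i \Xf(R_i)$ has the quotient topology from $\prod_i Z(R_i)$, while $\Xf(R)$ has the quotient topology from $Z(R)$; under the identification $Z(R)=\prod_i Z(R_i)$ these quotients are by the same equivalence relation (by the bijection just established), so the two topologies coincide.

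The main obstacle I anticipate is the injectivity step, where I need to know the Isom algebraic space $I$ admits a cover by a quasi-separated scheme so that Remark \ref{rem:BC} applies to $J$; this relies crucially on the quasi-separated Deligne-Mumford hypothesis on $\Xf$, and on verifying that the constructions in Proposition \ref{prop:suffconnDM} can be arranged to produce quasi-separated schemes throughout.
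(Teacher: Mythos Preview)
Your argument is correct in outline and reaches the conclusion, but it organizes the set-level bijection differently from the paper. The paper avoids your Isom-space argument by working uniformly with $T=Z\times_\Xf Z$: after choosing a separated cover $Z\to\Xf$ surjective on $R$- and $R_i$-points via Proposition~\ref{prop:suffconnDM}, it describes $\Xf(R)$ (resp.\ each $\Xf(R_i)$) as the coequalizer of $T(R)\rightrightarrows Z(R)$ (resp.\ $T(R_i)\rightrightarrows Z(R_i)$) via Proposition~\ref{quoquo}, applies Remark~\ref{rem:BC} to both $Z$ and $T$, and concludes by commuting the product with the coequalizer. Since $T$ is only an algebraic space when $\Xf$ is a genuine stack, the paper bootstraps: it first proves the lemma for algebraic spaces (where $T$ is a scheme and Remark~\ref{rem:BC} applies directly), then invokes that case to obtain $T(R)=\prod_i T(R_i)$ in the Deligne--Mumford case. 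This sidesteps precisely the quasi-separatedness worry you flagged, because $T$ inherits the needed finiteness from $Z$ and the diagonal of $\Xf$, and no auxiliary cover of an Isom space is required. Your route works but is less economical; the paper's coequalizer framing handles the bijection and the topology simultaneously.

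One citation issue: Theorem~\ref{thm:open} is proved only for essentially analytic rings, and $R=\prod_i R_i$ is not local, hence not essentially analytic, so you cannot invoke it for $Z(R)\to\Xf(R)$. Fortunately you only need that map to be a quotient, which Proposition~\ref{quoquo} already gives; your openness argument is only needed on the $R_i$ side, where it is valid and yields cleanly that $\prod_i Z(R_i)\to\prod_i\Xf(R_i)$ is an open surjection and hence a quotient.
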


\begin{proof}
  
  We first assume that $\Xf$ is an algebraic space.
  
  By Proposition \ref{prop:suffconnDM} we may choose a smooth cover $Z\to \Xf$ by a separated scheme such that $Z(R)\to \Xf(R)$ is surjective. Let $T=Z\times_{\Xf} Z$. We may describe $\Xf(R)$ as the coequalizer of the two projections $T(R)\rightrightarrows Z(R)$ by Proposition \ref{quoquo}. Similarly we may describe $\Xf(R_i)$ as the coequalizer of $T(R_i)\rightrightarrows Z(R_i)$. By Remark \ref{rem:BC}  $T(R)=\prod_i T(R_i)$ and $\prod_i Z(R_i)=Z$. Therefore, we may also decribe $\Xf(R)$ may also be described as the coequalizer of $\prod_i T(R_i)\rightrightarrows \prod_i Z(R_i)$, but as coequalizers commute with products this implies that $\Xf(R)=\prod_i \Xf(R_i)$.

  The proof when $\Xf$ is a Deligne-Mumford stack is the same, but $Z\times_{\Xf}Z=T$ is only an algebraic space and we use the algebraic space case to deduce that $T(R)=\prod_i T(R_i)$.
\end{proof}

\begin{prop}
  Let $S$ be the direct limit of sufficiently disconnected rings. Then $S$ is sufficiently disconneted.
\end{prop}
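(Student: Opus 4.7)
The plan is to verify both properties of Definition~\ref{def:suffdisc} for $S$ by descending the relevant finite data down the colimit to some $R_i$, applying sufficient disconnectedness there, and then base changing back up. Throughout, I will take ``direct limit'' to mean filtered colimit, write $S=\colim_{i\in I}R_i$, and denote by $\phi_i\colon R_i\to S$ and $\phi_{ij}\colon R_i\to R_j$ (for $j\geq i$) the canonical maps.

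For property (1), let $P$ be a finitely generated projective $S$-module. Since $P$ is a direct summand of some $S^n$, it is finitely presented, and so is its complementary summand $Q$ in $S^n$. By standard limit arguments for finitely presented modules over filtered colimits of rings (\cite[\href{https://stacks.math.columbia.edu/tag/00EO}{Tag 00EO}]{stacks-project} and surrounding material), both $P$ and $Q$, together with the splitting $P\oplus Q\cong S^n$, descend to some index $i$: there exist finitely presented $R_i$-modules $P_i, Q_i$ and an isomorphism $P_i\oplus Q_i\cong R_i^n$ whose base change to $S$ recovers the original splitting. Then $P_i$ is a direct summand of $R_i^n$, hence finitely generated projective over $R_i$. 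By sufficient disconnectedness of $R_i$, $P_i$ is free, and therefore $P\cong P_i\otimes_{R_i}S$ is free as well.

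For property (2), let $S\to S'$ be a faithfully flat \'etale ring map. Both \'etale and faithfully flat are finitely presented notions that descend through filtered colimits (\cite[\href{https://stacks.math.columbia.edu/tag/00QO}{Tag 00QO}]{stacks-project} for finite presentation and smoothness/\'etaleness, plus the standard openness argument for faithful flatness): there exists an index $i\in I$ and a faithfully flat \'etale $R_i$-algebra $R_i'$ such that $S'\cong R_i'\otimes_{R_i}S$. Since $R_i$ is sufficiently disconnected, there is an $R_i'$-algebra $R_i''$ that is finite \'etale over $R_i$. Set
\[
S''\;\coloneqq\;R_i''\otimes_{R_i}S.
\]
Because $R_i''$ is an $R_i'$-algebra, $S''$ is an $S'=R_i'\otimes_{R_i}S$-algebra, and because finite \'etale is stable under base change, $S''$ is finite \'etale over $S$. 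This is exactly the data required by Definition~\ref{def:suffdisc}(2).

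The only real subtlety is the descent step in each case, and in particular the claim that projectivity descends in property (1). The key point is that the splitting $P\oplus Q\cong S^n$ is a single isomorphism between two finitely presented $S$-modules, which by the standard equivalence between the category of finitely presented $S$-modules and the $2$-colimit of the categories of finitely presented $R_i$-modules does descend to some $R_i$; I expect this to be the main obstacle to write cleanly, but it follows from the cited Stacks tags. If the original ``direct limit'' were not assumed filtered, one would first reduce to the filtered case by observing that any colimit of rings is a filtered colimit of finitely generated subcolimits, and finite colimits of commutative rings reduce to pushouts, which can be folded into the filtered system by enlarging $I$.
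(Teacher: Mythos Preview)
Your proposal is correct and follows essentially the same argument as the paper: descend the finitely presented data (projective module, respectively faithfully flat \'etale algebra) to some $R_i$ in the filtered system, apply sufficient disconnectedness there, and base change back to $S$. If anything, your treatment of property~(1) is slightly more careful than the paper's, since you explicitly descend the splitting $P\oplus Q\cong S^n$ to guarantee that the descended module is projective, whereas the paper simply asserts the existence of a finitely presented projective $M_i$ over $T_i$ with $M\cong M_i\otimes_{T_i}S$.
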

\begin{proof}
  Write $S= \varinjlim_i T_i$, where each $T_i$ is sufficiently disconnected. We first check property $(1)$ of sufficiently disconnceted. Let $M$ be a finitely generated projective module over $S$. As finitely generated projective modules are finitely presented, there must exist an $i$ and finitely presented projective module $M_i$ over $T_i$ such that $M\cong M_i\otimes_{T_i}S$. As $T_i$ is sufficiently disconnected, $M_i$ is free, so $M$ is free.

  Now we check property $(2)$. Let $R'$ be a faithfully flat \'etale $S$-algebra. Then as $R'$ is a finitely presentedly $S$-algebra, there is an $i$ and a faithfully flat \'etale  $T_i$-algebra $R_i'$ such that $R'\cong R_i'\otimes_{T_i}S$. As $T_i$ is sufficiently disconnected, there is a $R_i'$-algebra $R_i''$ that is finite \'etale as a $T_i$-algebra. Then $R''=R''_i\otimes_{T_i}S$ is a $R'$-algebra that is finite \'etale as an $S$-algebra. Thus we conclude that $S$ is sufficently disconnected.
\end{proof}

\section{Stacks over the adeles}\label{sec:adeles}
In this section $k$ will be a global field. For any place, $v$, of $k$, $k_v$ will denote the completion at $v$, and if $v$ is nonarchimedean $\mathscr{O}_v$ will denote the valuation ring of $k_v$. Through the section $I$ will be a set of places of $k$ and $R=\prod_{v\in I}' (k_v,\mathscr{O}_v)$. For any finite set of places $J\subseteq I$, we will let $R_J$ denote $\prod_{v\in J}k_v\times \prod_{v\in I\setminus J}\mathscr{O}_v$.

If $\Yf$ is an algebraic stack over $R_J$, then by Remark \ref{prt} we may topologize $\Yf(R_J)$. If $J'\supseteq J$, we have a natural open inclusion $R_{J}\to R_{J'}$ which by Lemma \ref{ff1} induces a natural map $\Yf(R_J)\to \Yf(R_{J'})$. Now assume futhermore that $\Yf$ is quasi-separated. By Lemma \ref{lem:prodsets}, $\Yf(R_J)\cong \prod_{v\in J}\Yf(k_v)\times \prod_{v\in I\setminus J}\Yf(\mathscr{O}_v)$, and similarly $\Yf(R_{J'})\cong \prod_{v\in J'}\Yf(k_v)\times \prod_{v\in I\setminus J}\Yf(\mathscr{O}_v)$. Then by Theorem \ref{thm:opensubring}, $\Yf(\mathscr{O}_v)$ is open in $\Yf(k_v)$. This implies $\Yf(R_J)$ is open in $\Yf(R_{J'})$.

If $\Xf$ is any finitely presented stack over $R$, there is a finite $J\subseteq I$ and stack $\Yf$ over $R_J$ such that $\Xf\cong \Yf\times_{R_J} \Spec R$. Furthermore, as $\Xf$ is finitely presentated, $\Xf(R)=\colim_{J'\supseteq J}\Yf(R_{J'})$ where the colimit is over finite subsets of $I$ containing $J$ (see \cite{LMB}[Proposition 4.18])

\begin{defi}
  For an algebraic $R$-stack $\Xf$ of finite presentation, there exists a finite subset $J\subseteq I$ and $R_J$-scheme $\Yf$ such that $\Xf\cong \Yf\times_{\Spec R_j}\Spec R$. We define $\Xf(R)=\varinjlim_{J'\supseteq J}\Yf(R_{J'})$ as a topological space.
\end{defi}

\begin{prop}
  Let $\Xf$ be a separated Deligne-Mumford stack of finite presentation over $R$, then \[\Xf(R)=\prod{}'(\Xf(k_v),\Xf(\mathscr{O}_v))\] as topological spaces.
\end{prop}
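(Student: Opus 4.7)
The plan is to combine the definition of $\Xf(R)$ as a colimit with the product description from Lemma \ref{lem:prodsets} applied to the finite-level models, and then check that the colimit topology matches the restricted product topology via the open-embedding result of Theorem \ref{thm:opensubring}.

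First, I would choose a finite $J_0 \subseteq I$ and a finitely presented stack $\Yf_0$ over $R_{J_0}$ with $\Xf \cong \Yf_0 \times_{\Spec R_{J_0}} \Spec R$; by standard spreading-out arguments (and possibly enlarging $J_0$), I may assume $\Yf_0$ is separated and Deligne-Mumford, since these properties spread out. For each finite $J' \supseteq J_0$, set $\Yf_{J'} = \Yf_0 \times_{\Spec R_{J_0}} \Spec R_{J'}$, which is again a separated Deligne-Mumford stack of finite presentation over $R_{J'}$. By definition, $\Xf(R) = \varinjlim_{J' \supseteq J_0} \Yf_{J'}(R_{J'})$ as a topological space.

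Since each $R_{J'} = \prod_{v \in J'} k_v \times \prod_{v \in I \setminus J'} \mathscr{O}_v$ is a product of fields and complete discrete valuation rings and $\Yf_{J'}$ is quasi-separated Deligne-Mumford, Lemma \ref{lem:prodsets} yields a homeomorphism
\[
  \Yf_{J'}(R_{J'}) \;\cong\; \prod_{v \in J'} \Yf_{J'}(k_v) \times \prod_{v \in I \setminus J'} \Yf_{J'}(\mathscr{O}_v).
\]
Because $\Yf_{J'}(k_v) = \Xf(k_v)$ and $\Yf_{J'}(\mathscr{O}_v) = \Xf(\mathscr{O}_v)$ functorially under base change, this identifies $\Yf_{J'}(R_{J'})$ with $\prod_{v \in J'} \Xf(k_v) \times \prod_{v \in I \setminus J'} \Xf(\mathscr{O}_v)$, which is precisely the $J'$-level piece of the restricted product.

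Next I would check that the transition maps in the colimit match the transition maps of the restricted product. For $J' \subseteq J''$, the inclusion $R_{J'} \hookrightarrow R_{J''}$ is an open embedding of essentially analytic rings (on the coordinates $v \in J'' \setminus J'$ we pass from $\mathscr{O}_v$ to $k_v$), so Theorem \ref{thm:opensubring} gives that $\Yf_{J'}(R_{J'}) \to \Yf_{J''}(R_{J''})$ is an open embedding of topological spaces; under the product identification above, this is exactly the canonical open inclusion $\prod_{v \in J'} \Xf(k_v) \times \prod_{v \notin J'} \Xf(\mathscr{O}_v) \hookrightarrow \prod_{v \in J''} \Xf(k_v) \times \prod_{v \notin J''} \Xf(\mathscr{O}_v)$ (using that each $\Xf(\mathscr{O}_v) \subseteq \Xf(k_v)$ is open, again by Theorem \ref{thm:opensubring}). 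Taking the colimit of a system whose transition maps are open embeddings that agree with those defining the restricted product yields the restricted product topology on the nose.

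The main technical point is the last one, namely verifying that the colimit of spaces along open embeddings really is computed objectwise and gives the restricted product as a topological space (not just as a set). This is essentially formal once we know the transition maps are open embeddings, because the restricted product can itself be described as $\varinjlim_{J'} \prod_{v \in J'} \Xf(k_v) \times \prod_{v \notin J'} \Xf(\mathscr{O}_v)$ with open-embedding transition maps, and two colimits over the same directed system along the same (up to canonical identification) open embeddings are canonically homeomorphic. The subtlety I would want to be careful about is that this identification is genuinely independent of the auxiliary choice of $J_0$ and $\Yf_0$, which follows from the cofinality of $\{J' : J' \supseteq J_0\}$ in the system of all finite subsets of $I$.
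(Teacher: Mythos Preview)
Your proposal is correct and follows essentially the same route as the paper: spread $\Xf$ out to a model $\Yf$ over some $R_{J_0}$, apply Lemma~\ref{lem:prodsets} at each finite level $J'\supseteq J_0$ to identify $\Yf(R_{J'})$ with the $J'$-piece of the restricted product, and then take the colimit. The paper's proof is terser (it relegates the open-embedding claim for the transition maps to the discussion preceding the definition), while you spell out more of the spreading-out and colimit bookkeeping.

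One small slip to correct: the ring $R_{J'}$ is not essentially analytic (Definition~\ref{def:essan} requires the ring to be local), so you cannot invoke Theorem~\ref{thm:opensubring} directly for the inclusion $R_{J'}\hookrightarrow R_{J''}$. The right move is exactly the one you already indicate parenthetically: apply Theorem~\ref{thm:opensubring} factor-by-factor to each $\mathscr{O}_v\subseteq k_v$, and then use the product identification from Lemma~\ref{lem:prodsets} to conclude that $\Yf(R_{J'})\to\Yf(R_{J''})$ is an open embedding. This is also how the paper argues it in the paragraph just before the definition of $\Xf(R)$.
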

\begin{proof}
  Let $J\subseteq I$ be a finite subset such that there exists $\Yf$ an algebraic stack over $R_J$ with $\Xf\cong \Yf\times_{R_J}R$.

  Using Proposition \ref{prop:suffconnDM} find $Z\to \Yf$ a smooth cover by a separated scheme such that, $Z(R_{J'})\to \Yf(R_{J'})$ is surjective for all $J'\supseteq J$.

  For such $J'$ by Lemma \ref{lem:prodsets}, $\Yf(R_{J'})=\prod_{v\in J'}\Yf(k_v)\times\prod_{v\in I\setminus   J'}\Yf(\mathscr{O}_v)$ as topological spaces. Note that $\Yf(k_v)=\Xf(k_v)$ and $\Yf(\mathscr{O}_v)=\Xf(\mathscr{O}_v)$. Therefore, $\Yf(R_{J'})=\prod_{v\in J'}\Xf(k_v)\times\prod_{v\in I\setminus   J'}\Xf(\mathscr{O}_v)$. Taking a colimit over $J'$ yields the result.
\end{proof}

\begin{defi}
  Let $k$ be a field of characteristic zero. A stacky curve over $k$ is an algebraic stack $\Xf$ which is smooth, irreducible, 1-dimensional, and Deligne-Mumford and such that there is a dense Zariski open $U\subseteq \Xf$ such that $U$ is a scheme.
\end{defi}

\begin{defi}
  Let $k$ be a field of characteristic $0$, and let $\Xf$ be a stacky curve over $k$. Let $\Xf_{\text{coarse}}$ be the coarse moduli space and let $\Xf\to \Xf_{\text{coarse}}$ be the natural morphism. Let $P_1,\ldots,P_n\in \Xf_{\text{coarse}}(\ov{k})$ over which $\Xf\to \Xf_{\text{coarse}}$ is not an isomorphism. For each $1\leq i\leq n$ let $e_i$ be the order of the stabilizer over $P_i$. Then we define the Euler characteristic of $\Xf$ to be \[\chi(\Xf)= \chi(\Xf_{\text{coarse}})-n+\sum_i \frac{1}{e_i}.\]

    Define the genus of $\Xf$ by \[g(\Xf)=\frac{2-\chi(\Xf)}{2}.\]
\end{defi}

\begin{lem}\label{density}
  Let $\Xf$ be a stacky curve over a essentially analytic field $k$ of characteristic $0$. Let $U\subseteq \Xf$ be a dense open substack which is a scheme. Then $U(k)$ is dense in $\Xf(k)$.
\end{lem}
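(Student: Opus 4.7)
The strategy is to lift the question through a smooth cover to a smooth $k$-scheme, then reduce to the classical fact that a dense Zariski open in a smooth $k$-variety is analytically dense in its $k$-points.

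Fix $x \in \Xf(k)$ and an open neighborhood $W \subseteq \Xf(k)$ of $x$; I want to produce a point of $U(k) \cap W$. Since essentially analytic rings are sufficiently disconnected, Theorem~\ref{thm:lift} provides a smooth cover $\pi \colon Z \to \Xf$ by a $k$-scheme $Z$ together with a lift $z \in Z(k)$ of $x$. The morphism $\pi$ is smooth and $\Xf$ is smooth over $k$, so $Z$ is smooth over $k$; and $V := Z \times_{\Xf} U$ is a dense open subscheme of $Z$, being the preimage of a dense open under a faithfully flat map. By Theorem~\ref{thm:open}, $\pi \colon Z(k) \to \Xf(k)$ is open, so $W' := \pi^{-1}(W)$ is an open neighborhood of $z$ in $Z(k)$. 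If I can produce $v \in V(k) \cap W'$, then $\pi(v) \in U(k) \cap W$ finishes the proof, so the problem reduces to showing $V(k)$ is dense in $Z(k)$.

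To establish that density, I work locally at $z$. Shrinking $Z$ to an affine, connected Zariski neighborhood of $z$ (irreducible by smoothness), I pick a nonzero regular function $f \in \mathscr{O}(Z)$ vanishing on $Z \setminus V$; such an $f$ exists because $V$ is a nonempty open of the irreducible $Z$, so $Z \setminus V$ has nonzero ideal. Then $V(k) \supseteq \{f \neq 0\}$ on $k$-points, so it suffices to show that $\{f = 0\}(k) \subseteq Z(k)$ has empty analytic interior near $z$. Choose an étale morphism $g \colon Z \to \A^d_k$ with $g(z) = 0$, where $d = \dim_z Z$; by the essentially analytic property, $g$ induces a local homeomorphism $Z(k) \to k^d$ at $z$, and transports $f$ to a $k$-analytic function on a neighborhood of the origin in $k^d$.

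The main obstacle is the final analytic density claim: a nonzero $k$-analytic function on an open of $k^d$ has nowhere-dense zero locus. For the essentially analytic fields that arise in the rest of the paper — namely $\R$, $\mathbb{C}$, and finite extensions of $\Q_p$ — this is standard: such a function vanishing on a nonempty open of $k^d$ would vanish on the entire connected component of its domain, contradicting that its image represents the nonzero regular function $f$ on the Zariski-dense open $V \cap Z$; equivalently, one argues by induction on $d$ using Weierstrass preparation to realize $\{f = 0\}$ as a finite union of lower-dimensional analytic subvarieties. Combining this with the reduction above shows $V(k) \cap W' \neq \emptyset$, hence $U(k) \cap W \neq \emptyset$, and therefore $U(k)$ is dense in $\Xf(k)$.
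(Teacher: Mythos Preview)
Your argument is essentially correct for the concrete fields you name, but it takes a genuinely different route from the paper. The key divergence is in the choice of cover: you invoke Theorem~\ref{thm:lift} to obtain a \emph{smooth} cover $Z\to\Xf$, so $Z$ may have dimension $d>1$, and you are then forced to appeal to an analytic identity principle on $k^d$ (nowhere-density of the zero locus of a nonzero analytic function). The paper instead exploits that $\Xf$ is Deligne--Mumford and uses Proposition~\ref{prop:suffconnDM} to produce an \emph{\'etale} cover by a scheme $Z$ that is surjective on $k$-points; this keeps $Z$ one-dimensional. The paper then argues via the coarse space $X=\Xf_{\text{coarse}}$: the complement $C=X\setminus U$ is a finite set of points, a connected component $Z'$ of $Z$ maps nonconstantly to $X$, and since $Z'(k)$ is locally homeomorphic to $k$ any nonempty open of $Z'(k)$ is infinite, hence Zariski-dense in the curve $Z'$, hence cannot map entirely into $C$.

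What each approach buys: your argument, if one grants the analytic input, would prove the analogous statement in any dimension, not just for stacky curves. The paper's argument is more elementary---it needs only that nonempty opens in $k$ are infinite (i.e.\ $k$ is non-discrete) together with the trivial fact that infinite subsets of irreducible curves are Zariski-dense---and so stays closer to the bare axioms of ``essentially analytic.'' Your final step, by contrast, uses that the local inverse of an \'etale map is itself analytic and that nonzero analytic functions on $k^d$ have nowhere-dense zero set; neither of these is encoded in Definition~\ref{def:essan}, so you are right to flag it as the main obstacle and to restrict to the concrete fields at that point.
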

\begin{proof}
  Set $X=\Xf_{\text{coarse}}$. Note that $U\to X$ is an open inclusion. Let $C$ be the complement of $U$ in $X$ with the reduced scheme structure; this is a finite scheme. By Proposition \ref{prop:suffconnDM}, we may find an \'etale cover $f:Z\to \Xf$ by a scheme, such that $Z(k)\to \Xf(k)$ is surjective. Note that $ Z$ must also be $1$-dimensional and smooth.

  Let $x\in \Xf(k)$, and let $V\subseteq \Xf(k)$ be any open subset containing $x$. The goal is to show that $V\cap U(k)$ is nonempty. Let $z\in Z(k)$ be any preimage of $x$.

  Let $Z'\subseteq Z$ be the connected component of $Z$ containing $z$. We will first show that composite $Z'\to X$ is nonconstant. Assume by sake of contradiction that it is constant. Because $Z'\to X$ is constant, $Z'$ must map to a unique point of $X$, and furthermore since $Z'(k)$ is nonempty, the point in  the image of $Z'$ must have residue field $k$. Let $p\in X(\ov{k})$ be a geometric point localized at the image of $Z'$. As $X$ is a coarse moduli space, $\Xf(\ov{k})\to X(\ov{k})$ is bijective, so we may lift $p\in X(\ov{k})$ to some map $q:\Spec \ov{k}\to \Xf$. Set $\mathscr{T}=Z'\times_{\Xf,q}\Spec \ov{k}$. Consider the map $\mathscr{T}(\ov{k})\to Z'(\ov{k})$. This must be surjective as every  point in $Z'(\ov{k})$ maps to $p\in X(\ov{k})$ and therefore must have image in $\Xf$ isomorphic to $q$. Therefore, $ \mathscr{T}(\ov{k})$ is infinite. On the other hand, $\mathscr{T}$ is \'etale over $\Spec \ov{k}$, so $\mathscr{T}(\ov{k})$ is finite. This is a contradiction, so $Z'\to X$ must be nonconstant.

  As $k$ is essentially analytic, near $z\in Z'(k)$ the space $Z'(k)$ is homeomorphic to an an open subset of $k$. In particular any nonempty open $W\subseteq Z'(k)$ has infinitely many points and thus is Zariski dense in $Z'$. If the open $W$ under the composite $Z'(k)\to \Xf(k)\to X(k)$ lands in $C(k)$, then as $W$ is Zariski dense in $Z'$, $Z'\to X$ lands in $C$. However, as $Z'\to X$ is nonconstant, this cannot be the case. We conclude that the image of $W \to X(k)$ intersects $U(k)$.

  The set $W=f^{-1}(V)\cap Z'(k)$ is an open of $Z'(k)$ containing $z$. By the last paragraph, this open must have a point mapping to $U(k)$, so $V\cap U(k)$ must be nonempty as desired, so $U(k)$ is Zariski dense.
\end{proof}

\begin{thm}
  Let $k$ be a number field. Let $R$ be obtained from the ring of adeles of $k$ by removing the factor corresponding to one place. Let $\Xf$ be a stacky curve over $k$ of genus less than $1/2$. Then $\Xf(k)$ is dense in $\Xf(R)$.
\end{thm}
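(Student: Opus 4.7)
The plan is to reduce the density claim on $\Xf(R)$ to classical strong approximation for the open scheme locus $U \subseteq \Xf$, crucially using the hypothesis $g(\Xf) < 1/2$ to pin down the shape of $U$. Fix $x \in \Xf(R)$ and a basic open neighborhood $V = \prod_{v \in J} V_v \times \prod_{v \notin J \cup \{v_0\}} \Xf(\mathscr{O}_v)$, where $v_0$ is the removed place, $J$ is a finite set of places not containing $v_0$, and each $V_v \subseteq \Xf(k_v)$ is open. The goal becomes to find $y \in \Xf(k)$ with $y_v \in V_v$ for $v \in J$ and $y_v \in \Xf(\mathscr{O}_v)$ for $v \notin J \cup \{v_0\}$.

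First I would replace $\Xf$ by $U$. Since $U \hookrightarrow \Xf$ is an open immersion, hence smooth, $U(k_v) \hookrightarrow \Xf(k_v)$ is an open inclusion by Theorem \ref{thm:open}, and it is moreover dense by Lemma \ref{density}. For each $v \in J$ the intersection $V_v \cap U(k_v)$ is thus a nonempty open subset of $U(k_v)$; after shrinking, we may assume $V_v \subseteq U(k_v)$. Since $U(\mathscr{O}_v) \subseteq \Xf(\mathscr{O}_v)$ by functoriality, any $y \in U(k)$ with $y_v \in V_v$ for $v \in J$ and $y_v \in U(\mathscr{O}_v)$ for $v \notin J \cup \{v_0\}$ would complete the proof. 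The task is now purely a strong-approximation question for the scheme $U$.

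Next I would use the genus bound to classify $U$. From $\chi(\Xf) = 2 - 2g_{\Xf_{\text{coarse}}} - n + \sum 1/e_i > 1$ one forces $g_{\Xf_{\text{coarse}}} = 0$ and $\sum (1 - 1/e_i) < 1$; since each $e_i \geq 2$, this gives $n \leq 1$. If $n = 1$ the unique geometric stacky point is Galois-invariant, hence $k$-rational, so $\Xf_{\text{coarse}} \cong \P^1_k$ and $U \cong \A^1_k$. If $n = 0$ then $\Xf = U$ is a smooth conic, isomorphic to $\P^1_k$ as soon as it acquires a $k$-point. Finally I would invoke strong approximation for $U$: for $U = \A^1_k$ this is the classical density of $k$ in the restricted product $\prod'_{v \neq v_0}(k_v, \mathscr{O}_v)$, which immediately supplies the desired $y$; for $U = \P^1_k$ the integrality condition is vacuous since $\P^1$ is proper and so $\P^1(\mathscr{O}_v) = \P^1(k_v)$, and the needed approximation at the finitely many places of $J$ reduces to weak approximation, that is, density of $k$ in $\prod_{v \in J} k_v$. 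The main obstacle is the pointless-conic case $n = 0$ with $\Xf(k) = \emptyset$: one must then either appeal to the Hasse principle for conics (the Bhargava--Poonen result being strengthened) to show $\Xf(R) = \emptyset$ as well, or treat this degenerate sub-case separately.
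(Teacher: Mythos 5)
Your proposal takes essentially the same route as the paper: reduce to the scheme locus $U$, use Lemma \ref{density} to shrink the archimedean/finitely many opens into $U$, and finish with strong approximation for $\A^1$. The paper handles your ``main obstacle'' by assuming $\Xf(R)\neq\emptyset$ at the outset and citing Bhargava--Poonen for the classification ($\Xf_{\text{coarse}}\cong\P^1_k$ with at most one stacky point); in the $n=0$ pointless-conic case, what you need is not the Hasse principle per se but the parity consequence of Hilbert reciprocity --- a conic over a number field fails to have local points at an \emph{even} number of places --- so removing a single place from the adeles cannot make a pointless conic acquire an adelic point, which is what forces $\Xf(R)=\emptyset$. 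Two small points worth tightening: (a) to make sense of the integrality conditions $y_v\in\Xf(\mathscr{O}_v)$ you should fix an $\mathscr{O}_{k,S}$-model $\Xf'$ of $\Xf$ containing $\A^1_{\mathscr{O}_{k,S}}$ as an open substack, as the paper does, so that $\A^1(\mathscr{O}_v)\subseteq\Xf'(\mathscr{O}_v)$ is literally true for $v\notin S$; and (b) when $n=1$ the unique stacky geometric point being Galois-stable gives a rational point on $\Xf_{\text{coarse}}$, whence $\Xf_{\text{coarse}}\cong\P^1_k$ without any local--global input --- it is only the $n=0$ case that genuinely needs the reciprocity argument.
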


\begin{proof}
  In the notation of the section, $I$ is all but one place of $k$
  
  We may assume $\Xf(R)$ is nonempty. By \cite{BP}, the genus hypothesis on $\Xf$ implies that $\Xf_{\text{coarse}}\cong \P^1_k$ and the map $\Xf\to \Xf_{\text{coarse}}$ is an isomorphism away from one point, which we assume to be $\infty$. We must show that any nonempty open $V\subseteq \Xf(R')$ contains a point of $\Xf(k)$.

  There is a finite set of places $S$ of $k$ such that $\Xf$ descends to an $\mathscr{O}_{k,S}$-stack $\Xf'$. We can enlarge $S$ so that $\Xf'_{\text{coarse}}\cong \P^1_{\mathscr{O}_{k,S}}$. We view $\A^1_{\mathscr{O}_{k,S}}$ as an open substack in $\Xf'$. For any $S'$ a set of places, let $R^{S'}=\prod'_{v\in S'}(k_v,\mathscr{O}_v)$.

  Now there must be a finite set of places $S'\supseteq S$ and nonempty open subsets $V_{v}\subseteq \Xf'(k_v)$ for $v\in S'$ such that $V\supseteq \prod_{v\in S'}V_{v}\times \Xf(R^{S'})\subseteq \prod_v \Xf(k_v)=\Xf(R)$. By Lemma \ref{density}, we can shrink each $V_{v}$ to be nonempty have and the property that $V_{v}\subseteq \A^1(k_v)$. Then $V$ contains $\prod_{v\in S'}V_{v}\times \A^1(R^{S'})\subseteq \A^1(R) $, and strong approximation for the adeles guarantees that this has a $k$-point. This completes the proof.  
\end{proof}

\section{Appendix}
\begin{lem}\label{top:1}
  Let $I$ be an index set. For each $i\in I$, let $X_i$ be a topological space and $Z_i\subseteq X_i$ a subspace.

  Then, the natural map $\prod_{i\in I}Z_i\to \prod_{i\in I}X_i$ is a homeomorphism onto its image where we give each product the product topology.
\end{lem}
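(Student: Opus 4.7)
The plan is to show that the product topology on $\prod_{i\in I} Z_i$ (where each $Z_i$ carries the subspace topology from $X_i$) coincides with the subspace topology it inherits as a subset of $\prod_{i\in I} X_i$; since the map is manifestly a set-theoretic injection with image $\prod_i Z_i$, this equality of topologies is exactly what ``homeomorphism onto its image'' means.

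First I would set up the two natural bases. A basic open for the product topology on $\prod_i Z_i$ has the form $\prod_i U_i$, where each $U_i$ is open in $Z_i$ and $U_i = Z_i$ for all but finitely many $i$. By definition of the subspace topology on $Z_i$, each such $U_i$ can be written as $V_i \cap Z_i$ for some open $V_i \subseteq X_i$ (taking $V_i = X_i$ whenever $U_i = Z_i$). Thus the basic open becomes $\prod_i (V_i \cap Z_i)$ with $V_i = X_i$ for cofinitely many $i$. On the other hand, a basic open for the subspace topology inherited from $\prod_i X_i$ has the form $\bigl(\prod_i V_i\bigr) \cap \prod_i Z_i$, again with $V_i = X_i$ for all but finitely many $i$.

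The key point is the trivial but important set-theoretic identity
\[
  \Bigl(\prod_{i\in I} V_i\Bigr) \cap \prod_{i\in I} Z_i \;=\; \prod_{i\in I}(V_i \cap Z_i),
\]
which matches the two classes of basic opens element by element and respects the cofiniteness condition on the factors. I would then conclude that the two topologies have the same basis, hence agree, so the inclusion $\prod_i Z_i \hookrightarrow \prod_i X_i$ is a homeomorphism onto its image. There is no genuine obstacle here; the only thing to watch is the bookkeeping of the ``cofinitely many factors equal to the whole space'' condition, which is preserved automatically by the identity above.
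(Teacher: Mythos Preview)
Your proof is correct and follows essentially the same approach as the paper: both arguments reduce to basic opens $\prod_i U_i$, lift each $U_i$ to an open $V_i\subseteq X_i$ (taking $V_i=X_i$ when $U_i=Z_i$), and use the identity $\bigl(\prod_i V_i\bigr)\cap\prod_i Z_i=\prod_i(V_i\cap Z_i)$. The only cosmetic difference is that the paper first notes continuity and then proves openness, whereas you match the two bases symmetrically in one step.
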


\begin{proof}
  Let $T$ be the image of $\prod_{i\in I}Z_i$ in $\prod_{i\in I}X_i$. The map $\prod_{i\in I}Z_i\to T$ is bijective. Thus the topology on $T$ is coarser than that on $\prod_{i\in I}Z_i$. Therefore, we must show that any open $U\subseteq \prod_{i\in I}Z_i$ has open image in $T$.

  For each $i\in I$, we choose an open $U_i\subseteq Z_i$ such that  $U_i=Z_i$ for all but finitely many $i$. The set $\prod_{i\in I}U_i$ is open in $\prod_{i\in I}Z_i$, and opens of this type form a basis for the topology on $\prod_{i\in I}Z_i$. Therefore, it suffices to show that for any open of $\prod_{i\in I}Z_i$ of this form has open image in $T$.

  Let us choose such an open and thus such $U_i$. As $Z_i$ is a subspace of $X_i$, we may find open $V_i\subseteq X_i$ such that $V_i\cap Z_i= U_i$; if $U_i=Z_i$ we may take $V_i=X_i$. For each $i\in I$, choose such a $V_i$ while choosing $V_i=X_i$ if $U_i=Z_i$. Then $V=\prod_{i\in I}V_i$ is an open subset of $\prod_{i\in I}X_i$. Futhermore, $V\cap T$ is the image of $\prod_{i\in I}U_i$. Thus the image of $\prod_{i\in I}U_i$ in $T$ is open as desired.
\end{proof}

\begin{lem}\label{top:c}
  Let $I$ be an index set. For each $i\in I$, let $X_i\to Y_i$ be a map of topological spaces which is a quotient onto  its image.

  Then the natural map $\prod_{i\in I}X_i\to  \prod_{i\in I}Y_i$ is a quotient onto its image.
\end{lem}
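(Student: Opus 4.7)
The plan is to reduce to the surjective case and then attempt to verify the quotient property directly with basic opens. First I would apply Lemma \ref{top:1} to replace each $Y_i$ by the image $Z_i = q_i(X_i)$ equipped with its subspace topology, so that each $q_i$ becomes a surjective quotient $X_i \to Z_i$. Lemma \ref{top:1} says the image of $\prod X_i$ in $\prod Y_i$, with its subspace topology, is canonically homeomorphic to $\prod Z_i$ with the product topology. So it suffices to show that if each $q_i : X_i \to Z_i$ is a surjective quotient, then $q = \prod q_i : \prod X_i \to \prod Z_i$ is a quotient map.

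To verify this, I would take $U \subseteq \prod Z_i$ with $q^{-1}(U)$ open in $\prod X_i$ and show $U$ is open. Given $y = (y_i) \in U$, choose preimages $x_i \in q_i^{-1}(y_i)$, so $x = (x_i) \in q^{-1}(U)$. By definition of the product topology there exists a finite set $F \subseteq I$ and opens $W_i \subseteq X_i$ for $i \in F$ with $x_i \in W_i$ such that the basic open $\prod_{i \in F} W_i \times \prod_{i \notin F} X_i$ lies inside $q^{-1}(U)$. A set-theoretic check, using surjectivity of each $q_i$ to lift coordinatewise, shows that $\prod_{i \in F} q_i(W_i) \times \prod_{i \notin F} Z_i \subseteq U$.

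The final step, and the main obstacle, is to promote each $q_i(W_i)$ (or some shrinkage of it containing $y_i$) to an open subset of $Z_i$. The quotient hypothesis on $q_i$ says $q_i(W_i)$ is open in $Z_i$ iff its saturation $q_i^{-1}(q_i(W_i))$ is open in $X_i$. I would therefore try to produce a $q_i$-saturated open $W_i' \subseteq W_i$ containing $x_i$ and take its image, which by the quotient property of $q_i$ would be an open neighborhood of $y_i$ in $Z_i$; the product of these, times $Z_i$ for $i \notin F$, would then be a basic open neighborhood of $y$ inside $U$, completing the argument.

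The hard part is precisely the construction of such saturated neighborhoods: this is not a formal consequence of the quotient property alone (arbitrary quotient maps need not send arbitrary small opens to opens, nor contain saturated bases). I would therefore either (i) appeal to an additional property of the $q_i$ that makes $q_i(W_i)$ directly open—most naturally openness of $q_i$, which is what the smooth-cover quotients in the rest of the paper satisfy by Theorem~\ref{thm:open} and Propositions~\ref{prop:algopen} and~\ref{prop:12}—or (ii) work directly with the concrete topological spaces of interest (products of $R$-points of finite-type schemes) where saturated basic neighborhoods can be exhibited by hand. In either case the reduction in the first two paragraphs is the routine half; the saturation step is where all the topology happens.
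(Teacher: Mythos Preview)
Your reduction via Lemma~\ref{top:1} to the surjective case is correct, and you have put your finger on exactly the right obstruction: passing from a basic open box in $\prod X_i$ to an open neighborhood in $\prod Z_i$ requires producing saturated opens in each factor, and a bare quotient hypothesis does not give that.

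The paper's argument is entirely different in flavor. It writes each $Z_i$ as the coequalizer of $R_i = X_i \times_{Z_i} X_i \rightrightarrows X_i$ and then asserts that ``products commute with connected colimits,'' concluding that $\prod Z_i$ is the coequalizer of $\prod R_i \rightrightarrows \prod X_i$, hence a quotient. This is a one-line categorical argument with no saturation bookkeeping.

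The trouble is that the categorical step is not valid in $\mathrm{Top}$: products (limits) do \emph{not} commute with coequalizers (colimits) in the category of topological spaces, because $\mathrm{Top}$ is not cartesian closed. Equivalently, a product of quotient maps need not be a quotient map; the standard counterexample is the quotient $p:\mathbb{R}\to \mathbb{R}/\mathbb{Z}$ collapsing $\mathbb{Z}$ to a point, for which $p\times \id_{\mathbb{Q}}$ is not a quotient. So the lemma as stated is actually false in general, and the paper's proof glosses over precisely the difficulty you isolated.

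Your proposed fix (i) is the right one: if each $q_i$ is an \emph{open} surjection, then $\prod q_i$ is an open surjection (trivially, on basic boxes) and hence a quotient. In every place the paper invokes this lemma the factor maps come from smooth covers and are open by Propositions~\ref{prop:algopen}, \ref{prop:12}, or Theorem~\ref{thm:open}, so the intended applications survive under that strengthened hypothesis. Your option (ii), working directly with the concrete spaces, amounts to the same thing.
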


\begin{proof}
  For each $i$, let $R_i=X_i\times_{Y_i}X_i$. Then $Y_i$ is the coequalizer of $R_i\rightrightarrows X_i$ where the maps are the projections. As products commute with connected colimits, we conclude that $\prod_{i\in I}$ is the equalizer of $\prod_{i\in I}R_i\rightrightarrows \prod_{i\in I}X_i$, and thus $\prod_{i\in I} Y_i $ is a quotient of $\prod_{i\in I}X_i$.
\end{proof}

\begin{prop}\label{top:a}
  Let $X,Y,Z,$ and $W$ be topological spaces. Let 
  \begin{center}
    \begin{tikzcd}
      X \ar[r,"f"]\ar[d,"s"]& Y\ar[d,"g"]\\
      Z\ar[r,"t"]& W
    \end{tikzcd}
  \end{center}
  be a commutative diagram of the underlying sets. If $f$, $g$, and $s$ are continuous  and if futhermore, $s$ is a quotient map, then $t$ is continuous.
\end{prop}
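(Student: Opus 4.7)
The plan is to use the universal property of the quotient topology directly. To show $t : Z \to W$ is continuous, I will take an arbitrary open set $U \subseteq W$ and show $t^{-1}(U)$ is open in $Z$. Since $s$ is a quotient map, this reduces to showing that $s^{-1}(t^{-1}(U))$ is open in $X$.

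First I would rewrite $s^{-1}(t^{-1}(U)) = (t \circ s)^{-1}(U)$, then use the hypothesis that the diagram commutes as a diagram of underlying sets, which gives $t \circ s = g \circ f$ (as set-theoretic maps, which is all we need for taking preimages). Thus $s^{-1}(t^{-1}(U)) = f^{-1}(g^{-1}(U))$.

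Now I would invoke continuity of $g$ to conclude $g^{-1}(U)$ is open in $Y$, then continuity of $f$ to conclude $f^{-1}(g^{-1}(U))$ is open in $X$. This shows $s^{-1}(t^{-1}(U))$ is open in $X$, and the quotient-map property of $s$ then yields that $t^{-1}(U)$ is open in $Z$. Since $U$ was arbitrary, $t$ is continuous.

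There is no real obstacle here; this is a direct application of the universal property defining quotient maps. The only subtle point worth noting in writing is that the commutativity is given on underlying sets, but that is exactly what one needs for the preimage identity $s^{-1} \circ t^{-1} = f^{-1} \circ g^{-1}$ on subsets of $W$.
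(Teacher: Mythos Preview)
Your proof is correct and follows essentially the same approach as the paper: both arguments take an open $U\subseteq W$, use commutativity of the square to identify $s^{-1}(t^{-1}(U))=f^{-1}(g^{-1}(U))$, note this is open by continuity of $f$ and $g$, and then invoke the defining property of a quotient map to conclude $t^{-1}(U)$ is open. The only cosmetic difference is that the paper phrases the last step via surjectivity of $s$ (writing $t^{-1}(U)=s(s^{-1}(t^{-1}(U)))$) rather than citing the universal property directly, but the content is identical.
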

\begin{proof}
  Let $U\subseteq W$ be an open subset. Let $U'=f^{-1}(g^{-1}(U))$. As $f$ and $g$ are continuous, $U'\subseteq X$ is open.

  By the commutativity of the diagram, we also have $U'=s^{-1}(t^{-1}(U))$. Therefore, $U'=s^{-1}(s(U'))$. By the definition of the quotient topology, as $U'$ is open and $U'=s^{-1}(s(U'))$, $s(U')\subseteq Z$ is open. But as $s$ is surjective, $s(U')=s(s^{-1}(t^{-1}(U)))=t^{-1}(U)$. Therefore, $t^{-1}(U)$ is open. As $U$ was arbitrary, we conclude that $t$ is continuous as desired
\end{proof}

\begin{prop}\label{top:b}
  Let $X,Y,Z,$ and $W$ be topological spaces. Let 
  \begin{center}
    \begin{tikzcd}
      X \ar[r,"f"]\ar[d,"s"]& Y\ar[d,"g"]\\
      Z\ar[r,"t"]& W
    \end{tikzcd}
  \end{center}
  be commutative diagram of topological spaces. If $f$ and $g$ are quotients, then so is $t$.
\end{prop}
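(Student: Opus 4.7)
The plan is to exploit the commutativity $g \circ f = t \circ s$ together with two elementary facts about quotient maps. First, the composition of two quotient maps is a quotient map; applying this to $f$ and $g$ shows that $g \circ f$, and hence $t \circ s$, is a quotient. Second, whenever a map factors as a composition of continuous maps and the overall composition is a quotient, the last map in the composition is itself a quotient. Applied to $t \circ s$, this yields the desired conclusion for $t$.

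More concretely, I would proceed as follows. Surjectivity of $t$ is immediate: $f$ and $g$ are quotients, hence surjective, so $g \circ f = t \circ s$ is surjective, which forces $t$ surjective. For the topological part, let $U \subseteq W$ and suppose $t^{-1}(U)$ is open in $Z$. Continuity of $s$ (which is part of the data of the commutative diagram of topological spaces) gives that $s^{-1}(t^{-1}(U))$ is open in $X$. By commutativity, $s^{-1}(t^{-1}(U)) = (g \circ f)^{-1}(U) = f^{-1}(g^{-1}(U))$, so $f^{-1}(g^{-1}(U))$ is open. Applying the quotient property of $f$ shows $g^{-1}(U)$ is open in $Y$, and then applying the quotient property of $g$ shows $U$ is open in $W$. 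Conversely, if $U$ is open in $W$, then $t^{-1}(U)$ is open since $t$ is continuous. Thus $t$ is a quotient.

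There is no real obstacle; the entire content is packaging the standard fact that quotients compose and that composition-with-a-continuous-map detects the quotient property. The only small thing to keep in mind is to verify at the start that $g \circ f$ itself is a quotient (rather than just continuous and surjective), which is what allows the pullback $f^{-1}(g^{-1}(U))$ being open to force $U$ open in $W$.
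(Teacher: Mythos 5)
Your proof is correct and takes essentially the same route as the paper: both observe that $g\circ f = t\circ s$ is a quotient (composition of quotients) and then deduce the quotient property of $t$, the only cosmetic difference being that you spell out the open-set bookkeeping while the paper phrases it via the ``finest topology making the composite continuous'' characterization.
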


\begin{proof}

  As $f$ and $g$ are quotients, so is $g\circ f$. This means that $W$ has the finest topology such that $X\to W$ is continuous.

  If $t$ is not a quotient, we may put a finer topology on $W$ so that $Z\to W$ is continuous. If this were the case, then that finer topology on $W$ would make the composite $X\to Z\to W$ be continuous. However, $W$ already has the finest topology making that composite continuous. We conclude $t$ is a quotient.

\end{proof}

\begin{prop}
  Let $U$ and $V$ be topological spaces and $f:U\to V$ a continuous map.

  Let $I$ be an index set and let $\{U_i\}_{i\in I}$ be an open cover of $U$. If for every $i$, $U_i\to V$ is a quotient onto its image, then $U\to V$ is a quotient onto its image.
\end{prop}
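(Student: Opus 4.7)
The plan is to prove the proposition via the saturated-open-set characterization of quotient maps: writing $W := f(U)$ with the subspace topology from $V$ and $W_i := f(U_i)$, it suffices to check that for every open $A\subseteq U$ satisfying $A=f^{-1}(f(A))$ (an ``$f$-saturated'' open set), the image $f(A)$ is open in $W$. The hypothesis is that each restriction $f|_{U_i}\colon U_i\to W_i$, with $W_i$ carrying the subspace topology from $V$, is a quotient map; the covering property $W=\bigcup_i W_i$ is immediate from $U=\bigcup_i U_i$.

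The first step is to observe that for any open $f$-saturated $A\subseteq U$, the intersection $A\cap U_i$ is open in $U_i$ (since $U_i$ is open in $U$) and $f|_{U_i}$-saturated: any $u\in U_i$ with $f(u)\in f(A\cap U_i)\subseteq f(A)$ must lie in $A$ by $f$-saturation of $A$, hence in $A\cap U_i$. I would then apply the quotient property of $f|_{U_i}$ to conclude that $f(A\cap U_i)$ is open in $W_i$. A second use of $f$-saturation gives the set-theoretic identity $f(A\cap U_i)=f(A)\cap W_i$: the inclusion $\subseteq$ is clear, and conversely any $w\in f(A)\cap W_i$ admits a lift $u\in U_i$ with $f(u)=w\in f(A)$, and saturation forces $u\in A$, so $w\in f(A\cap U_i)$. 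Thus $f(A)\cap W_i$ is open in $W_i$ for every $i$.

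The main obstacle is the last step: concluding that $f(A)$ itself is open in $W$ from the data that $f(A)\cap W_i$ is open in $W_i$ for each $i$, together with $W=\bigcup_i W_i$. This would be immediate if each $W_i$ were open in $W$, so the plan is to establish that openness as a separate preliminary fact. Given $w=f(u)\in W_i$ with $u\in U_i$, I would try to exhibit an open neighborhood of $w$ in $W$ contained in $W_i$ by selecting a small enough open neighborhood of $u$ inside $U_i$, taking the image under $f|_{U_i}$ (which is open in $W_i$ since the map is a quotient onto its image and the chosen neighborhood may be replaced by its saturation in $U_i$), and showing this image is open in the ambient $W$. This is the most delicate point, because $f$ need not be an open map globally and the subspace topology on $W_i$ does not directly interact with the global topology on $W$; this is where I expect the bulk of the technical work to lie, and once it is in place the preceding saturation argument finishes the proposition.
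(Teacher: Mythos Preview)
Your first two paragraphs are fine and reduce the problem correctly to showing that each $W_i=f(U_i)$ is open in $W=f(U)$. The obstacle you flag in the third paragraph, however, is not merely ``delicate'' --- it is insurmountable, because the proposition as stated is false. Take $U=\{a,b\}$ with the discrete topology, $V=\{a,b\}$ with the indiscrete topology, and $f=\mathrm{id}$. The singletons $U_1=\{a\}$ and $U_2=\{b\}$ form an open cover of $U$, and each $f|_{U_i}$ is a homeomorphism onto its one-point image, hence trivially a quotient onto its image. But $f\colon U\to V$ is a continuous bijection that is not a homeomorphism, so it is not a quotient map. In this example $W_1=\{a\}$ and $W_2=\{b\}$ are not open in $W=V$, which is exactly the failure you anticipated.

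So your plan cannot be completed without an additional hypothesis (for instance, that the images $f(U_i)$ are open in $V$, or that $f$ is an open map). As for comparison with the paper: the paper's own proof consists only of the set-up sentence (naming the images and stating the saturated-open criterion) and then terminates; it contains no argument beyond what you already wrote in your first sentence, and in particular does not address the gap either.
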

\begin{proof}
  Let $Z$ be the image of $U$ in $V$, and let $Z_i$ be the image of $U_i$ for each $i\in I$. To show that $U\to V$ is a quotient onto $Z$, we must show that if $W\subseteq U$ is an open such that $f^{-1}(f(W))=W$, then $f(W)$ is open in $Z$.
\end{proof}

\begin{lem}\label{lem:22223}
  Let \[
    \begin{tikzcd}
      X \ar[rr,"f"]\ar[dr,"g"] & & Y\ar[dl,"h"]\\
      & Z & 
    \end{tikzcd}\]
  be a commutative diagram of schemes. If $g$ is finite \'etale and $h$ is separated and \'etale, then $f$ is \'etale and the scheme theoretic image of $f$ is finite \'etale  over $Z$.
  
  Therefore, if $R$ is a ring, $R'$ an \'etale $R$-algebra, and $S$ a $R'$-algebra which is finite \'etale as an $R$-algebra, the image $T$ of $R'$ in $S$ is finite \'etale over $R$.
\end{lem}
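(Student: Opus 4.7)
The plan is to show that $f$ is finite \'etale via a graph trick, then identify the scheme-theoretic image of $f$ with the set-theoretic image equipped with its induced clopen subscheme structure, and finally show this image is finite \'etale over $Z$ by a cancellation argument for properness.

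For the first step, I would look at the graph $\Gamma_f \colon X \to X \times_Z Y$. It is a section of the first projection $X \times_Z Y \to X$, which is the pullback of $h$ along $g$ and hence \'etale, so $\Gamma_f$ is an open immersion. Simultaneously, $\Gamma_f$ is the pullback of the diagonal $\Delta \colon Y \to Y \times_Z Y$ along $f \times \id$; since $h$ is separated this diagonal is a closed immersion, so $\Gamma_f$ is also a closed immersion. Therefore $\Gamma_f$ is a clopen immersion, and composing with the second projection $X \times_Z Y \to Y$ (finite \'etale as the pullback of $g$) shows that $f$ itself is finite \'etale.

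Next, since $f$ is finite (hence closed) and \'etale (hence open), the image $T \coloneqq f(X)$ is clopen in $Y$ and, as a subscheme, inherits a separated \'etale structure map to $Z$. The map $X \to T$ is faithfully flat, so $\O_T \hookrightarrow f_* \O_X|_T$ is injective, identifying $T$ with the scheme-theoretic image of $f$. To show $T \to Z$ is finite, I would check it is proper: since $X \to T$ is surjective, and surjectivity of scheme morphisms is preserved under arbitrary base change, and since $X \to Z$ is universally closed (being finite), $T \to Z$ itself is universally closed. Combined with separatedness and finite type (inherited from $Y \to Z$), $T \to Z$ is proper, and a proper \'etale morphism is finite \'etale (as \'etale is quasi-finite and proper quasi-finite is finite).

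For the ring-theoretic consequence, I would apply the scheme statement to $Z = \Spec R$, $Y = \Spec R'$, $X = \Spec S$, noting that $\Spec R'$ is automatically separated over $\Spec R$ as it is affine. The scheme-theoretic image then corresponds to the subring $T$ equal to the image of $R' \to S$, yielding the claim. The main subtlety is the cancellation step for universal closedness, which relies on the elementary but occasionally glossed-over fact that surjectivity of scheme morphisms is preserved under arbitrary base change.
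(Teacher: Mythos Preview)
Your argument is correct and follows the same graph-trick strategy as the paper: factor $f$ through $\Gamma_f\colon X\to X\times_Z Y$ and the second projection, use that $\Gamma_f$ is an open immersion (as a section of an \'etale map), and then read off that the image of $f$ is clopen in $Y$.

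The one genuine difference is in how much you extract from the graph. The paper only records that $\Gamma_f$ is an open immersion and that $\pi_2$ is \'etale, concluding that $f$ is \'etale; it then says the image is closed ``because $X$ is finite'' and stops after observing the image is \'etale over $Z$, never explicitly justifying \emph{finiteness} of the image over $Z$. You instead also note that $\Gamma_f$ is a closed immersion (pullback of the diagonal of the separated $h$), so $\Gamma_f$ is clopen and $f$ itself is finite \'etale; you then run an explicit properness-cancellation argument (using that $X\to T$ is surjective and $X\to Z$ is universally closed) to get $T\to Z$ proper, hence finite. This is more work but actually closes a step the paper leaves implicit. A slightly shorter alternative, once you know $f\colon X\to T$ is finite \'etale surjective, is to invoke fpqc descent for finiteness along $X\to T$ applied to the composite $X\to Z$; either way your conclusion is sound.
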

\begin{proof}
  Consider $X\times Y$ let the projection to $X$ be  $\pi_1$ and the projection to $Y$ be $\pi_2$. Note that $\pi_1$ and  $\pi_2$ are pullbacks of $h$ and $g$  respectively, so are \'etale.

  The graph $\Gamma_f$ provides a section to $\pi_1$, and thus as a section of an \'etale  map $X\xrightarrow{\Gamma_f} X\times_Z Y$ has open image. The composite  $X\xrightarrow{\Gamma_f}X\times_Z Y \xrightarrow{\pi_2} Y$ is $f$, and the first map is an open inclusion and the second \'etale, so we conclude that $f$ is \'etale.

  Now $f$ has open image because it is \'etale, and it has closed image because $X$ is finite. Therefore, $f$ is surjective onto a clopen subset of $Y$. This clopen subset must be the scheme theoretic image of  $Y$ and since $Y\to Z$ is \'etale and the scheme theoretic image of $X$  is open, we conclude that the restriction of $h$ to the scheme theoretic image of $X$ is \'etale.
\end{proof}

\begin{rmk}\label{rem:22223}
  Lemma \ref{lem:22223} holds when $Y$ is only an algebraic space.
\end{rmk}

\section*{Acknowledgments}
I would to thank Bjorn Poonen for suggesting this problem and for his guidance.

\bibliography{main}

\newcommand{\noopsort}[1]{} \newcommand{\printfirst}[2]{#1}
  \newcommand{\singleletter}[1]{#1} \newcommand{\switchargs}[2]{#2#1}
\begin{thebibliography}{{Sta}18}

\bibitem[BLR90]{NeronModels}
Siegfried Bosch, Werner L\"{u}tkebohmert, and Michel Raynaud.
\newblock {\em N\'{e}ron models}, volume~21 of {\em Ergebnisse der Mathematik
  und ihrer Grenzgebiete (3) [Results in Mathematics and Related Areas (3)]}.
\newblock Springer-Verlag, Berlin, 1990.

\bibitem[BP19]{BP}
Manjul Bhargava and Bjorn Poonen.
\newblock The local-global principle for integral points on stacky curves.
\newblock In preparation, 2019.

\bibitem[Con]{Conradadeles}
Brian Conrad.
\newblock Weil and grothendieck approaches to adelic points.
\newblock [Online; accessed 17-March-2020].

\bibitem[LMB00]{LMB}
G\'{e}rard Laumon and Laurent Moret-Bailly.
\newblock {\em Champs alg\'{e}briques}, volume~39 of {\em Ergebnisse der
  Mathematik und ihrer Grenzgebiete. 3. Folge. A Series of Modern Surveys in
  Mathematics [Results in Mathematics and Related Areas. 3rd Series. A Series
  of Modern Surveys in Mathematics]}.
\newblock Springer-Verlag, Berlin, 2000.

\bibitem[Ols16]{olssonstacks}
Martin Olsson.
\newblock {\em Algebraic spaces and stacks}, volume~62 of {\em American
  Mathematical Society Colloquium Publications}.
\newblock American Mathematical Society, Providence, RI, 2016.

\bibitem[{Sta}18]{stacks-project}
The {Stacks Project Authors}.
\newblock \textit{Stacks Project}.
\newblock \url{http://stacks.math.columbia.edu}, 2018.

\bibitem[Uli17]{Ulirsch}
Martin Ulirsch.
\newblock Tropicalization is a non-{A}rchimedean analytic stack quotient.
\newblock {\em Math. Res. Lett.}, 24(4):1205--1237, 2017.

\end{thebibliography}
\bibliographystyle{alpha}
\end{document}